\newtheorem{theorem}{Theorem}[section]
\newtheorem{proposition}[theorem]{Proposition}
\newtheorem{lemma}[theorem]{Lemma}
\newtheorem{corollary}[theorem]{Corollary}
\newtheorem{conjecture}[theorem]{Conjecture}
\theoremstyle{definition}
\newtheorem{example}[theorem]{Example}
\theoremstyle{remark}
\newcommand{\rank}{\operatorname{rank}}
\newcommand{\mlt}{\operatorname{mlt}}
\newcommand{\gcr}{\operatorname{gcr}}
\newcommand{\dbgraph}{$D\!B$}
\colorlet{colbg}{white}
\colorlet{colfg}{black}
\colorlet{colgraphv}{colfg!75!white}
\colorlet{colgraphe}{colfg!55!white}
\colorlet{colG}{DarkSeaGreen}
\definecolor{colR}{HTML}{CC6677}
\definecolor{colO}{HTML}{DDCC77}
\definecolor{colB}{HTML}{6699CC}
\colorlet{colY}{Gold!90!black}
\colorlet{colGl}{colG!50!white}
\colorlet{colOl}{colO!50!white}
\colorlet{colGrayl}{black!15!white}
\tikzstyle{vertex}=[fill=colgraphv,circle,inner sep=0pt, minimum size=4pt]
\tikzstyle{hvertex}=[vertex,minimum size=6pt,fill=colR]
\tikzstyle{edge}=[line width=1.5pt,colgraphe]
\tikzstyle{hedge}=[line width=2pt,colB]
\tikzstyle{labelsty}=[font=\scriptsize]
\title{Rigidity of nearly planar classes of graphs}
\author{%
    Sean Dewar\thanks{School of Mathematics, University of Bristol. E-mail: \texttt{sean.dewar@bristol.ac.uk}} \and 
    Georg Grasegger\thanks{Johann Radon Institute for Computational and Applied Mathematics (RICAM), Austrian Academy of Sciences. E-mail: \texttt{georg.grasegger@ricam.oeaw.ac.at}} \and 
    Eleftherios Kastis\thanks{School of Mathematical Sciences, Lancaster University. E-mail: \texttt{e.kastis@lancaster.ac.uk}} \and
    Anthony Nixon\thanks{School of Mathematical Sciences, Lancaster University. E-mail: \texttt{a.nixon@lancaster.ac.uk}} \and
    Brigitte Servatius\thanks{Mathematical Sciences, WPI. E-mail: \texttt{bservat@wpi.edu}}}
\begin{document}
\date{}
\maketitle

\begin{abstract}
We explore the rigidity of generic frameworks in 3-dimensions whose underlying graph is close to being planar. Specifically we consider apex graphs, edge-apex graphs and their variants and prove independence results in the generic 3-dimensional rigidity matroid adding to the short list of graph classes for which 3-dimensional rigidity is understood. We then analyse global rigidity for these graph classes and use our results to deduce bounds on the maximum likelihood threshold of graphs in these nearly planar classes.
\end{abstract}

{\small \noindent \textbf{MSC2020:} 52C25, O5C10, 62H22}

{\small \noindent \textbf{Keywords:} apex graph, generic rigidity, global rigidity, nearly planar graph, edge-apex graph, rigidity matroid, maximum likelihood threshold, generic completion rank}

\section{Introduction}

A bar-joint framework $(G,p)$ in $\mathbb{R}^d$ is an ordered pair consisting of a finite simple graph $G=(V,E)$ and a realisation $p:V\rightarrow \mathbb{R}^d$. Given such a structure analysing whether and how it can deform has a range of obvious practical applications. Mathematically, the problem is typically analysed by a linearisation known as infinitesimal rigidity \cite{AsimowRoth} which gives rise to the $d$-dimensional \emph{rigidity matroid} $\mathcal{R}_d(G,p)$. In this article we are principally interested in generic frameworks, for which the matroid depends only on the graph and we tend to drop the $G$ and refer to the $d$-dimensional \emph{generic rigidity matroid} as $\mathcal{R}_d$.

When $d=1$ then $\mathcal{R}_d$ is precisely the cycle matroid of the graph. The case when $d=2$ is also well understood due to Pollaczek-Geiringer \cite{Geiringer,Laman}. However, it is a long-standing problem in rigidity theory to understand the nature of the $d$-dimensional generic rigidity matroid when $d>2$ (see \cite{GGJN, GSS, JJ05, MR3838323} inter alia). For obvious reasons attention is focused on the case when $d=3$. Here, among the small number of special cases that are understood is the case of planar graphs.

\begin{theorem}[\cite{MR0400239}]\label{thm:gluck}
Every planar graph is $\mathcal{R}_3$-independent and a planar graph is $\mathcal{R}_3$-rigid if and only if it is a triangulation.
\end{theorem}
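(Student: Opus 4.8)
The plan is to reduce both assertions to a single fact: \emph{every planar triangulation is $\mathcal{R}_3$-isostatic}, i.e.\ simultaneously $\mathcal{R}_3$-independent and $\mathcal{R}_3$-rigid. Granting this, the theorem follows quickly. By Euler's formula a planar triangulation on $n\ge 3$ vertices has exactly $3n-6$ edges, which is the rank of $\mathcal{R}_3$ on $n$ vertices (read as $\binom{n}{2}$ when $n\le 4$), so ``isostatic'' is indeed the combined statement. Now let $G$ be an arbitrary planar graph. Fix a planar embedding and add edges until the embedding becomes a triangulation $T\supseteq G$; since $\mathcal{R}_3$-independence is closed under taking subgraphs, independence of $T$ gives independence of $G$. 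For the rigidity characterisation: if $G$ is a triangulation it is $\mathcal{R}_3$-rigid by the reduction; conversely, if the planar graph $G$ on $n\ge 3$ vertices is $\mathcal{R}_3$-rigid then its edge set spans $\mathcal{R}_3$, so $|E(G)|\ge 3n-6$, while planarity forces $|E(G)|\le 3n-6$; hence equality holds and $G$ is a maximal planar graph, i.e.\ a triangulation. The cases $n\le 2$ are trivial.

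It remains to show a planar triangulation $T$ on $n$ vertices is $\mathcal{R}_3$-isostatic. Because $|E(T)|=3n-6$ equals the rank of $\mathcal{R}_3$ on $n$ vertices, here independence and rigidity are equivalent, so it suffices to prove $T$ is $\mathcal{R}_3$-rigid. As generic rigidity is equivalent to the existence of a single infinitesimally rigid realisation, it is enough to exhibit one infinitesimally rigid framework $(T,p)$ in $\mathbb{R}^3$. For $n\le 4$ this is immediate, since then $T\in\{K_3,K_4\}$. For $n\ge 4$, a planar triangulation is $3$-connected, so by Steinitz's theorem $T$ is the edge graph of a convex $3$-polytope $P$, which is necessarily simplicial; taking $p$ to be the vertex coordinates of $P$, Dehn's theorem — the infinitesimal form of Cauchy's rigidity theorem for convex polytopes — asserts that $(T,p)$ is infinitesimally rigid in $\mathbb{R}^3$. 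Hence $T$ is $\mathcal{R}_3$-rigid, as required.

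Alternatively, one can bypass polytopes and argue by induction on $n$: every planar triangulation on $n\ge 5$ vertices is obtained from a smaller planar triangulation by a vertex split (e.g.\ by reversing the deletion of a vertex of degree $3$, $4$ or $5$, which must exist since the average degree is less than $6$), and Whiteley's vertex-splitting theorem shows that this operation preserves $\mathcal{R}_3$-isostaticity; the base case is $K_4$.

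The genuinely substantial input is the rigidity of convex simplicial $3$-polytopes (Cauchy--Dehn), or, in the alternative route, the combinatorial generation of triangulations by vertex splits together with the vertex-splitting theorem — I expect this to be the main obstacle. Once either ingredient is in hand, the rest is Euler's formula plus elementary matroid facts (restriction for independence, spanning for rigidity). The only points needing care are the small-$n$ degeneracies in the edge count and the standard but non-trivial passage from ``some realisation is infinitesimally rigid'' to ``the graph is generically rigid''.
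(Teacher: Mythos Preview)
The paper does not supply a proof of this theorem; it is quoted as a background result with a citation to Gluck's 1975 paper. Your primary argument is in fact essentially Gluck's: realise a $3$-connected planar triangulation as the boundary of a simplicial convex $3$-polytope via Steinitz's theorem, then invoke the infinitesimal Cauchy--Dehn rigidity theorem to obtain one infinitesimally rigid realisation, hence generic $\mathcal{R}_3$-rigidity; the deduction for arbitrary planar graphs by extending to a triangulation and using Euler's formula is the standard wrap-up. So the proposal is correct and matches the classical proof the paper is implicitly invoking.

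One small quibble with your alternative inductive route: what you describe --- reversing the deletion of a vertex of degree $3$, $4$ or $5$ --- is not Whiteley's \emph{vertex splitting} (whose inverse is edge contraction) but rather the $0$-, $1$- and ``$2$-extension'' moves. The degree-$3$ and degree-$4$ cases are exactly the $0$- and $1$-extensions of \Cref{lem:01ext} and do preserve $\mathcal{R}_3$-independence, but the degree-$5$ case (delete a degree-$5$ vertex and add two chords to retriangulate the resulting pentagon) is not in general rigidity-preserving without further care: one must check the added chords can be chosen so the smaller graph is still a simple triangulation and independent, and the inverse ``$2$-extension'' is not known to preserve $\mathcal{R}_3$-independence in full generality. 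The clean inductive version uses genuine vertex splitting (every triangulation on $n\ge 5$ vertices has a contractible edge yielding a smaller triangulation) together with Whiteley's vertex-splitting theorem. Your main Steinitz--Dehn argument is unaffected by this.
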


Limited progress has been made by considering graphs embeddable on other surfaces. Impressively Fogelsanger~\cite{Fogelsanger} proved that the graph of a triangulation of any surface is $\mathcal{R}_3$-rigid. While triangulations of the sphere have exactly the necessary number of edges needed for $\mathcal{R}_3$-rigidity, triangulations of surfaces of positive genus have more edges than are needed. 
Kastis and Power \cite{K-P} gave a precise characterisation for any graph embeddable on the projective plane and Cruickshank, Kitson and Power \cite{CKP} gave a characterisation for a specific class of toroidal graphs. Another related result was obtained by Nevo \cite{Nevo} who proved that all $K_5$-minor free graphs are $\mathcal{R}_3$-independent (and showed the analogous result holds in 4 and 5-dimensions).

We take a different direction and consider four generalisations of planar graphs and $\mathcal{R}_3$-rigidity in these contexts.

Let $G=(V,E)$ be a graph. We say that $G$ is:
\begin{itemize}
\item \emph{apex} if there exists $v\in V$ such that $G-v$ is planar.
\item \emph{critically apex} if, for all $v\in V$, $G-v$ is planar.
\item \emph{edge-apex} if there exists $e\in E$ such that $G-e$ is planar.
\item \emph{critically edge-apex} if, for all $e\in E$, $G-e$ is planar.
\end{itemize}

\Cref{fig:veapex} shows an apex graph (left) as well as an edge-apex graph (right) with the apex vertex/edge labeled. Further \Cref{tab:all-apex} illustrates the number of non-planar graphs in these families on small vertex sets.

\begin{figure}[ht]
    \centering
    \begin{tikzpicture}[scale=0.9]
        \foreach \w in {0,1,...,4}
        {
            \node[vertex] (a\w) at (72*\w+18:1.75) {};
            \node[vertex] (b\w) at (72*\w+18:1) {};
            \draw[edge] (a\w)--(b\w);
        }
        \foreach \w [remember=\w as \ow (initially 4)] in {0,1,...,4}
        {
            \draw[edge] (a\w)--(a\ow);
        }
        \foreach \w [remember=\w as \ow (initially 3)] in {0,2,4,1,3}
        {
            \draw[edge] (b\w)--(b\ow);
        }
        \node[vertex] (c1) at ($(a0)+(a4)-(a3)$) {};
        \node[vertex] (c2) at ($(c1)+(a3)-(a2)$) {};
        \node[vertex] (c3) at ($(c2)+(a2)-(a1)$) {};
        \node[vertex] (c4) at (a4) {};
        \node[hvertex] (c0) at (a0) {};
        \foreach \w [remember=\w as \ow (initially 4)] in {0,1,...,4}
        {
            \draw[edge] (c\w)--(c\ow);
        }
        \foreach \w [remember=\w as \ow (initially 3)] in {0,2,4,1,3}
        {
            \draw[edge] (c\w)--(c\ow);
        }
    \end{tikzpicture}
    \qquad
    \begin{tikzpicture}
        \clip[] (-2.2,-2)rectangle(2.2,2.2);
        \foreach \w in {0,1,...,11}
        {
            \node[vertex] (a\w) at (30*\w:1.5) {};
        }
        \foreach \w [remember=\w as \ow (initially 11)] in {0,1,...,11}
        {
            \draw[edge] (a\w)--(a\ow);
        }
        \foreach \w [remember=\w as \ow (initially 10)] in {0,2,4,...,10}
        {
            \draw[edge] (a\w)--(a\ow);
        }
        \node[vertex] (b1) at (-0.4,0.25) {};
        \node[vertex] (b2) at (0.4,0.25) {};
        \draw[edge] (b1)--(a2) (b1)--(a8) (a2)--(a8);
        \draw[edge] (b2)--(a4) (b2)--(a10) (a4)--(a10);
        \draw[hedge] (a0)to[bend right=140,looseness=3] node[labelsty,above] {$e$} (a6);
        \node[label={[labelsty]right:$v$}] at (a0) {}; 
        \node[label={[labelsty]left:$u$}] at (a6) {}; 
    \end{tikzpicture}
    \caption{Examples of apex and edge-apex graphs.}
    \label{fig:veapex}
\end{figure}
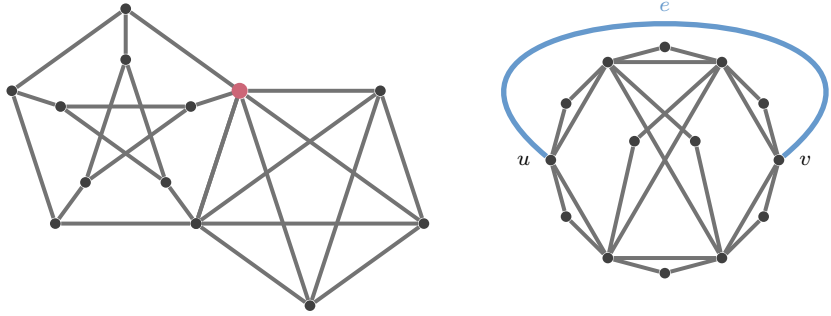

\begin{table}[ht]
    \centering
    \begin{tabular}{rrrrrr}
         \toprule
         $|V|$ & non-planar & apex & critically apex & edge-apex & critically edge-apex\\\midrule
         5 & 1 & 1 & 1 & 1 & 1\\
         6 & 13 & 12 & 8 & 11 & 2\\
         7 & 207 & 190 & 40 & 156 & 4 \\
         8 & 5143 & 4482 & 258 & 3398 & 10\\
         9 & 189195 & 142142 & 1310 & 89085 & 24\\
         10 & 10663766 & 5517578 & 6084 & 2559911 & 51\\
         \bottomrule
    \end{tabular}
    \caption{Number of non-planar connected graphs with different apex properties. The data behind the statistics in this table is available \cite{DataApex}.}
    \label{tab:all-apex}
\end{table}

These terms generalise in the obvious way. That is, we can consider \emph{$k$-apex, critically $k$-apex, $k$-edge-apex and critically $k$-edge-apex} graphs, for $k\in \mathbb{N}$, by taking the case $k=1$ to be as above and the generalisation to delete sets of vertices/edges of size $k$. 
The complete graph $K_n$, $n \geq 5$, is obviously $(n-4)$-apex and an easy calculation shows that the edge apicity of $K_n$ is $\binom{n-3}{2}$.

These concepts, and variants thereof, have been studied in the graph theory literature \cite{Ding,gubser_1996,variations}, in particular apex graphs seem to be fundamental \cite{Jorg,RST} in areas of graph theory related to Hadwiger's conjecture. For a further example, non-planar critically apex graphs have been called critical non-planar graphs and the number of these on small vertex sets can be found at OEIS \cite[A158922]{OEIS}. 

Note that if a graph $G$ is edge-apex, then it is trivially also vertex-apex.
Also, planar graphs trivially satisfy these definitions but each of the four graph classes is strictly stronger than planar graphs and none of the classes coincide.

As well as $\mathcal{R}_3$-independence we also provide new results for other rigidity concepts. A framework $(G,p)$ is \emph{globally $\mathcal{R}_d$-rigid} if every other framework $(G,q)$ in $\mathbb{R}^d$ with the same edge lengths can be obtained from $(G,p)$ by an isometry of $\mathbb{R}^d$. For details on the theory of global rigidity the reader is directed to \cite{gortler2010characterizing,JacksonJordan,Tsuff}.
In the context of global rigidity, triangulations of surfaces have received attention recently \cite{CJTglob,JT19}. We also provide an application of our results to Gaussian graphical models using a recent link between rigidity theory and maximum likelihood estimation \cite{Betal,gross2018maximum}.

We conclude the introduction by outlining the structure of the paper, highlighting our main contributions. In \Cref{sec:background} we provide the background results from topological graph theory and rigidity theory that we need. 
\Cref{sec:edge-apex,sec:apex} contain key results characterising $\mathcal{R}_3$-independence for edge-apex graphs (\Cref{t:ear}) and providing structural results on the apex case (\Cref{prop:apex} and \Cref{prop:apex2}).
In \Cref{sec:critical} we characterise $\mathcal{R}_3$-independence for critically apex graphs (\Cref{thm:critapex}) and critically 2-apex graphs (\Cref{thm:crit2apex}). For critically edge-apex graphs we characterise $\mathcal{R}_3$-independence for $k$-edge-apex graphs for all $k\leq 7$ (\Cref{thm:critkedgeapex}).
Then in \Cref{sec:global} we analyse extensions to global rigidity including a characterisation for edge-apex graphs (\Cref{t:eagr}). We also consider extensions to maximum likelihood thresholds for Gaussian graphical models in \Cref{sec:mlt}. Here our key results include \Cref{thm:mltea} which shows two natural graph parameters coincide for edge-apex graphs and \Cref{apexmlt} which tightly bounds the same parameters for $k$-apex graphs. 

We have just touched the surface of an investigation of rigidity properties for nearly planar graphs and we hope to have interested the reader in continuing this line of investigation.

\section{Background}
\label{sec:background}
In this section we provide the background results from topological
graph theory and rigidity theory that we need later.

\subsection{Planar graphs}

A graph $G$ is planar if it can be drawn in the plane without edge crossings. We will consider simple connected planar graphs. From F\'{a}ry's theorem~\cite{Fary} we know that every such graph has an embedding such that the edges are non-crossing straight line segments and all faces are (topological) disks. An edge-maximal planar graph is a \emph{triangulation} and all faces of such a graph are 3-cycles.
By Euler's formula, every triangulation on $n$ vertices must have exactly $3n-6$ edges.
A triangulation is 3-connected unless it is $K_3$. 
Given two connected disjoint planar graphs $G_1$, $G_2$, we can get a connected planar graph by identifying  vertex $a$  of $G_1$  with  vertex $b$ of $G_2$. We call this operation vertex join and denote the resulting graph a $G_1 \dot{v} G_2$, where $v$ is understood to be the vertex $a=b$.  Also identifying a pair of edges yields a planar graph (edge join $G_1 \bar{e} G_2$). The resulting planar graphs are never edge maximal.
However, we can take two triangulations $T_1$ and $T_2$ and identify a facial triangle of $T_1$ with a facial triangle of $T_2$ to obtain a triangulation $T_1 \binom{\Delta}{f} T_2$ inheriting all faces from $T_1$ and $T_2$, only the identified face is not  a face any longer, it is a non-facial 3-cycle. We call this operation a $\Delta$ join. 

\begin{lemma}\label{structure}
  Every simple planar triangulation $T$ can be uniquely written as $ T= T_1\Delta T_2 \Delta \ldots \Delta T_k$, where each $T_i$ is $K_4$ or a 4-connected triangulation. The graph having $T_i$ as its vertices  and as its edges the faces to be joined is a tree, denoted $S(T)$.   
\end{lemma}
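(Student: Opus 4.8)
The plan is to prove existence by induction on $|V(T)|$ and then obtain uniqueness by showing that the set of ``gluing triangles'' used in any such decomposition is forced to be exactly the set of all separating triangles of $T$.

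\emph{Existence.} If $T=K_4$ or $T$ is $4$-connected, take $k=1$ and let $S(T)$ be a single vertex. Otherwise $T$ has at least $5$ vertices, hence is $3$-connected (a simple triangulation on at least $4$ vertices is $3$-connected) but not $4$-connected, so $T$ has a $3$-element vertex cut. It is well known that a minimal $3$-separator of a planar triangulation induces a triangle, a \emph{separating triangle} $\Delta$; in a fixed plane embedding $\Delta$ is a Jordan curve with a nonempty set of faces of $T$ on each side. Filling in $\Delta$ as a new face on each side yields two triangulations $T'$ ($\Delta$ together with the ``outside'' vertices) and $T''$ ($\Delta$ together with the ``inside'' vertices), each having $\Delta$ as a facial triangle and satisfying $4\le |V(T')|,|V(T'')|<|V(T)|$, with $T=T'\Delta T''$. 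Apply the induction hypothesis to $T'$ and $T''$ and glue $S(T')$ to $S(T'')$ by a new edge, labelled by $\Delta$, joining the unique node of $S(T')$ whose piece has $\Delta$ as a face to the corresponding node of $S(T'')$. Since $\Delta$ is a facial (not a separating) triangle of $T'$ and of $T''$, we never cut along it in the recursion, so it survives as a face of whichever piece it lands in; gluing two trees along an edge yields a tree, so $S(T)$ is a tree and every piece is $K_4$ or $4$-connected.

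\emph{Uniqueness.} There are two directions. First, in any valid decomposition every gluing triangle $\Delta$ is a separating triangle of $T$: the two pieces incident to $\Delta$ have nonempty interiors, and the tree structure forces every path of $T$ between these interiors to pass through $\Delta$. Second, and this is the crux, every separating triangle of $T$ must actually occur as a gluing triangle. If $C$ is a separating triangle contained with all three edges inside a single piece $T_i$, then $C$ is a non-facial $3$-cycle of the $K_4$-or-$4$-connected triangulation $T_i$, which is impossible; and if the inside and outside faces of $C$ were split across more than one piece, one argues $C$ itself separates the pieces, so $C$ is a gluing triangle after all. Granting both directions, the set of gluing triangles of \emph{any} valid decomposition equals the set of all separating triangles of $T$, an invariant of $T$; the pieces are then precisely the triangulations obtained by simultaneously cutting $T$ along all these triangles (each such piece has at least $4$ vertices and no separating triangle, hence is $K_4$ or $4$-connected), and $S(T)$ is their incidence tree. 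Thus the decomposition is unique.

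The main obstacle is the second direction of uniqueness: ruling out a separating triangle being ``hidden'' inside a piece, equivalently showing that the separating triangles of $T$ form a non-crossing family with respect to the plane embedding, so that cutting along all of them is well defined and refines any coarser cut. I would establish non-crossing directly: if two separating triangles interleaved, some edge of one would join a vertex strictly inside the other to a vertex strictly outside it, forcing that edge to cross the corresponding Jordan curve in the plane embedding, which is impossible. Combined with the easy first direction and the characterisation of triangulations without separating triangles, this pins down both the multiset of pieces and the labelled tree $S(T)$. An essentially equivalent alternative is a leaf-stripping induction on $|V(T)|$: peel off a piece corresponding to a leaf of $S(T)$ and argue that the separating triangle attaching it must be a gluing triangle, and the attached piece a leaf piece, in every valid decomposition, then recurse.
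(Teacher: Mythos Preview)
Your argument is correct and rests on the same key fact as the paper's, namely that separating (equivalently, non-facial) triangles of a triangulation do not cross, so cutting along all of them simultaneously is well defined and yields pieces with no separating triangle. The organisation differs: the paper invokes Whitney's uniqueness of the embedding to make the set of non-facial $3$-cycles an invariant of $T$, observes directly that two non-facial triangles share at most one edge (whence each lies entirely on one side of the other), and then cuts along all $k$ of them at once to obtain $k+1$ pieces and a connected incidence graph on $k+1$ vertices and $k$ edges, hence a tree. Your version separates existence (by induction, peeling off one separating triangle at a time) from uniqueness (showing the gluing triangles of any valid decomposition coincide with the separating triangles of $T$); this is a little longer but makes the uniqueness claim more explicit than the paper does. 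Either way the crux is the non-crossing property, which you state and justify correctly.
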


\begin{proof}
A classical result of Whitney shows that every 3-connected planar graph has an essentially unique embedding in the plane. So given a simple planar triangulation $T$, which 3-cycles are faces and which are not facial is determined. If $T$ contains no non-facial 3-cycle, it is 4-connected. If there is a non-facial 3-cycle $f$, we can write $T=T_1 \binom{\Delta}{f} T_2$
where $T_i$ is a triangulation with a face $f$. Note that two non-facial triangles can have at most one edge in common, so every other non-facial triangle of $T$ is contained in either $T_1$ or $T_2$, but not both.
Let $k$ be the number of non-facial 3-cycles of $T$, so we obtain $k+1$ 4-connected triangulations (resp.\ tetrahedra) $T_0 \ldots T_k$. Now consider the graph $S(T)$ whose vertices are the $T_i$ and $T_i$ is adjacent to $T_j$ if both contain a face $f$ which is a non-facial 3-cycle of $T$. Then $S(T)$ is connected and hence a tree. 
\end{proof}

We call $S(T)$ the \emph{structure tree} of the triangulation $T$ and its vertices the 4-blocks of $T$.

\subsection{Graphs on surfaces of higher genus}

A map is a graph embedded into a compact connected two-dimensional manifold such that the complement of the graph in the manifold is a disjoint union of open topological disks called faces. 
A 1-vertex 1-face map on a surface of genus $g$ must have, by Euler's formula, $2-2g$ edges if the surface is orientable, or $2-g$ edges in the unorientable case, so for a graph to be embeddable on a surface such that every face is a (topological) disk, a graph must have a certain minimum number of edges. On the other hand, if we want every face to be a 3-cycle, then the number of edges is exactly $3n-6+2g$ if the surface is orientable and $3n-6 +g$ in the non-orientable case.

The term Euler-genus was coined in~\cite{EG}, it is called generalized genus in~\cite{Add}.
If a surface $\Sigma$ is obtained from the sphere by the addition of $h$ handles
and $k$ crosscaps, then the \emph{Euler-genus} of $\Sigma$, $eg (\Sigma)$, is defined to be $k + 2h$. For a connected graph $G$, the Euler genus of G is 
the minimum of $eg (\Sigma)$ over all $ \Sigma$ in which $G$ can be cellularly embedded. 

$K_5$ is not planar, but it can be cellularly embedded on the projective plane as well as the torus. The projective plane has Euler-genus~1, while the torus has Euler-genus~2, so the Euler-genus of $K_5$ is~1. A projective plane embedding of $K_5$ is shown in \Cref{oriented09}.

\begin{figure}[htb]
\centering
\begin{tikzpicture}
    \begin{scope}
        \foreach \l [count=\i from 0] in {c,d,e,a,b,c,d,e,a,b}
        {
            \node[vertex,label={[labelsty,label distance=-2pt]90+\i*36:$\l$}] (v\i) at (90+\i*36:1.2) {};
        }
        \foreach \i [remember=\i as \o (initially 9)] in {0,1,...,9}
        {
            \draw[edge] (v\i)--(v\o);
        }
        \foreach \i [remember=\i as \o (initially 9)] in {1,3,...,9}
        {
            \draw[edge] (v\i)--(v\o);
        }
    \end{scope}
    \node[font=\Large] at (2,0) {$+$};
    \begin{scope}[xshift=4cm]
        \foreach \l [count=\i from 0] in {c,b,a,e,d,c,b,a,e,d}
        {
            \node[vertex,label={[labelsty,label distance=-2pt]90+\i*36:$\l'$}] (w\i) at (90+\i*36:1.2) {};
        }
        \foreach \i [remember=\i as \o (initially 9)] in {0,1,...,9}
        {
            \draw[edge] (w\i)--(w\o);
        }
         \foreach \i [remember=\i as \o (initially 8)] in {0,2,...,8}
        {
            \draw[edge] (w\i)--(w\o);
        }
    \end{scope}
    \node[font=\Large] at (6,0) {$=$};
    \begin{scope}[xshift=8cm]
        \begin{scope}
            \foreach \l [count=\i from 0] in {c,d,e,a,b,c,d,,,b}
            {
                \node[vertex,label={[labelsty,label distance=-2pt]90+\i*36:$\l$}] (v\i) at (90+\i*36:1.2) {};
            }
            \foreach \i [remember=\i as \o (initially 8)] in {9,0,1,2,3,4,5,6,7}
            {
                \draw[edge] (v\i)--(v\o);
            }
            \foreach \i [remember=\i as \o (initially 9)] in {1,3,...,9}
            {
                \draw[edge] (v\i)--(v\o);
            }
        \end{scope}
        \begin{scope}[shift={($(v7)+(v8)$)}]
            \foreach \l [count=\i from 0] in {c',b',,,d',c',b',a',e',d'}
            {
                \node[vertex,label={[labelsty,label distance=-2pt]90+\i*36:$\l$}] (w\i) at (90+\i*36:1.2) {};
            }
            \foreach \i [remember=\i as \o (initially 3)] in {4,5,6,7,8,9,0,1,2}
            {
                \draw[edge] (w\i)--(w\o);
            }
             \foreach \i [remember=\i as \o (initially 8)] in {0,2,...,8}
            {
                \draw[edge] (w\i)--(w\o);
            }
        \end{scope}
        \node[vertex,label={[labelsty,label distance=-1pt]180:$a$}] at (v7) {};
        \node[vertex,label={[labelsty,label distance=-1pt]0:$e$}] at (v8) {};
    \end{scope}
\end{tikzpicture}
\caption{$K_5$ on the projective plane and $K_5 \overline{(a,e)} K_5$ on the Klein Bottle. \label{oriented09}}
\end{figure}
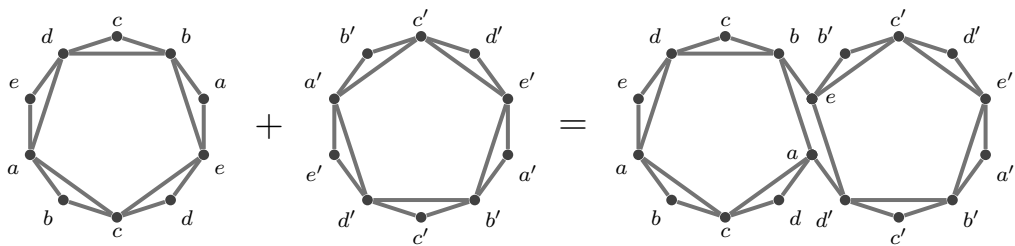

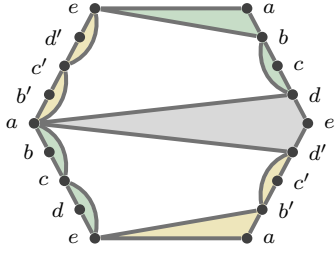
\begin{figure}[htb]
\centering
\begin{tikzpicture}[yscale=1.1,baseline={(0,0)}]
    \node[vertex,label={[labelsty]180:$a$}] (a) at (0,0) {};
    \node[vertex,label={[labelsty]180:$b'$}] (b) at (60:0.4) {};
    \node[vertex,label={[labelsty]180:$c'$}] (c) at (60:0.8) {};
    \node[vertex,label={[labelsty]180:$d'$}] (d) at (60:1.2) {};
    \node[vertex,label={[labelsty]180:$e$}] (e) at (60:1.6) {};
    \node[vertex,label={[labelsty]0:$a$}] (a2) at ($(e)+(2,0)$) {};
    \node[vertex,label={[labelsty]0:$b$}] (b2) at ($(a2)+(-60:0.4)$) {};
    \node[vertex,label={[labelsty]0:$c$}] (c2) at ($(a2)+(-60:0.8)$) {};
    \node[vertex,label={[labelsty]0:$d$}] (d2) at ($(a2)+(-60:1.2)$) {};
    \node[vertex,label={[labelsty]0:$e$}] (e2) at ($(a2)+(-60:1.6)$) {};
    \node[vertex,label={[labelsty]180:$b$}] (br) at (-60:0.4) {};
    \node[vertex,label={[labelsty]180:$c$}] (cr) at (-60:0.8) {};
    \node[vertex,label={[labelsty]180:$d$}] (dr) at (-60:1.2) {};
    \node[vertex,label={[labelsty]180:$e$}] (er) at (-60:1.6) {};
    \node[vertex,label={[labelsty]0:$a$}] (ar2) at ($(er)+(2,0)$) {};
    \node[vertex,label={[labelsty]0:$b'$}] (br2) at ($(ar2)+(60:0.4)$) {};
    \node[vertex,label={[labelsty]0:$c'$}] (cr2) at ($(ar2)+(60:0.8)$) {};
    \node[vertex,label={[labelsty]0:$d'$}] (dr2) at ($(ar2)+(60:1.2)$) {};
    
    \draw[edge] (a)--(b) (b)--(c) (c)--(d) (d)--(e);
    \draw[edge] (a2)--(b2) (b2)--(c2) (c2)--(d2) (d2)--(e2);
    \draw[edge] (a)--(d2) (e)--(a2) (e)--(b2);
    \draw[edge] (a)to[bend right=40] (c);
    \draw[edge] (c)to[bend right=40] (e);
    \draw[edge] (b2)to[bend right=40] (d2);
    \draw[edge] (a)--(br) (br)--(cr) (cr)--(dr) (dr)--(er);
    \draw[edge] (ar2)--(br2) (br2)--(cr2) (cr2)--(dr2) (dr2)--(e2);
    \draw[edge] (a)--(dr2) (er)--(ar2) (er)--(br2);
    \draw[edge] (a)to[bend left=40] (cr);
    \draw[edge] (cr)to[bend left=40] (er);
    \draw[edge] (br2)to[bend left=40] (dr2);

    \begin{scope}[on background layer]
        \fill[colGl] (e.center)--(a2.center)--(b2.center)--(e.center)--cycle;
        \fill[colGl] (b2.center)--(c2.center)--(d2.center)to[bend left=40](b2.center)--cycle;
        \fill[colOl] (a.center)--(b.center)--(c.center)to[bend left=40](a.center)--cycle;
        \fill[colOl] (c.center)--(d.center)--(e.center)to[bend left=40](c.center)--cycle;
        \fill[colGrayl] (a.center)--(d2.center)--(e2.center)--(dr2.center)--(a.center)--cycle;
        \fill[colOl] (er.center)--(ar2.center)--(br2.center)--(er.center)--cycle;
        \fill[colOl] (br2.center)--(cr2.center)--(dr2.center)to[bend right=40](br2.center)--cycle;
        \fill[colGl] (a.center)--(br.center)--(cr.center)to[bend right=40](a.center)--cycle;
        \fill[colGl] (cr.center)--(dr.center)--(er.center)to[bend right=40](cr.center)--cycle;
    \end{scope}
\end{tikzpicture}
\caption{Torus map of $K_5 \overline{(a,e)} K_5$.}
\end{figure}

While the operations $\dot{v}$ and $\bar{e}$ performed on embedded graphs of two surfaces of genus~$\geq 1$ do not directly yield embeddings on some surface, we can perform the $\Delta$ join. Consider two surfaces $\Sigma_1$ and $\Sigma_2$ of Euler genus $k_1$ and $k_2$ respectively and a simple triangulation $T_1$ on $\Sigma_1$ and a simple triangulation  $T_2$ of $\Sigma_2$. If $T_i$ has $n_i$ vertices and $3n_i-6+k_i$  then $T_1 \Delta T_2 $ has $n_1 +n_2 -3$ vertices and 
$3(n_1 + n_2 -3) -6 + k_1+k_2$ edges, which means it is a triangulation of a surface of Euler genus $k_1+k_2$.

We will use the following theorem from topological graph theory which is a special case of \cite[Theorem 1]{Add}.

\begin{theorem}\label{addgenus}
Suppose $G=(V,E)$.
Let $G_1=(V_1,E_1)$ and $G_2=(V_2,E_2)$ be graphs of Euler genus $eg_1$ and $eg_2$ such that $V=V_1\cup V_2$, $E=E_1\cup E_2$, $|V_1\cap V_2|=2$ and $|E_1\cap E_2|=1$.
Then the Euler genus of $G$ is $eg_1+eg_2$.  
\end{theorem}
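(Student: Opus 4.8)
The statement is an additivity theorem for Euler genus under amalgamation along a single edge, so the plan is to prove the two inequalities $eg(G)\le eg_1+eg_2$ and $eg(G)\ge eg_1+eg_2$ separately. Write $V_1\cap V_2=\{x,y\}$; since the unique edge $e\in E_1\cap E_2$ has both of its endpoints in $V_1\cap V_2$ and $G$ is simple, necessarily $e=xy$ with $x\neq y$. We may also assume $G_1$ and $G_2$ (hence $G$) are connected, so that Euler genus is defined for all of them. The inequality $eg(G)\le eg_1+eg_2$ is proved by an explicit surgery combining optimal embeddings and is the easy half; the reverse inequality is the substantive one.

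For the upper bound, fix cellular embeddings $G_i\hookrightarrow\Sigma_i$ with $eg(\Sigma_i)=eg_i$. In $\Sigma_1$ choose a face incident with $e$ and, inside it, a closed disk $B_1$ whose boundary circle is the union of the edge $e$ with an arc $\alpha_1$ running from $x$ to $y$ through the interior of that face, so that $\operatorname{int}(B_1)\cap G_1=\emptyset$. Removing $\operatorname{int}(B_1)$ from $\Sigma_1$ leaves a compact surface-with-boundary $\Sigma_1^-$, with $\chi(\Sigma_1^-)=\chi(\Sigma_1)-1$, in which $G_1$ is still embedded — now with $e$, $x$ and $y$ on the single boundary circle $e\cup\alpha_1$. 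Carry out the same construction in $\Sigma_2$. Gluing $\Sigma_1^-$ to $\Sigma_2^-$ along their boundary circles so as to identify the two copies of $e$ (hence of $x$ and of $y$) and to match $\alpha_1$ with $\alpha_2$ yields a closed surface $\Sigma$ into which $G=G_1\cup G_2$ embeds. Since gluing two surfaces-with-boundary along circles changes the sum of Euler characteristics only by $\chi(S^1)=0$, we get $\chi(\Sigma)=\chi(\Sigma_1)+\chi(\Sigma_2)-2$, i.e.\ $eg(\Sigma)=eg_1+eg_2$, and hence $eg(G)\le eg_1+eg_2$. (The embedding of $G$ is in fact cellular — the two modified faces merge into one disk and all others are untouched — but this is not even needed, since $eg(G)$ is the minimum of $eg(\Sigma)$ over all embeddings of $G$.)

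For the lower bound, take an optimal cellular embedding $G\hookrightarrow\Sigma$ and restrict it to $G_1$. Everything in $G$ but not in $G_1$ forms the subgraph $G_2-e$ (which still contains the attachment vertices $x,y$); after a routine reduction we may assume $G_2-\{x,y\}$ is connected, in which case $G_2-e$ together with its attachments at $x$ and $y$ is confined to the closure of a single face $\phi$ of the (generally non-cellular) embedding $G_1\hookrightarrow\Sigma$, while every other face of $G_1\hookrightarrow\Sigma$ is a face of $G\hookrightarrow\Sigma$, hence a disk. Capping the boundary circles of $\phi$ with disks yields a cellular embedding of $G_1$ in a surface $\Sigma_1'$, and correspondingly $\overline{\phi}$ yields a surface $\Sigma_2'$ into which $G_2$ embeds; since $\Sigma$ is reassembled from $\Sigma_1'$ and $\Sigma_2'$ by re-gluing along these circles, the Euler-characteristic bookkeeping of the previous paragraph runs in reverse to give $eg_1+eg_2\le eg(\Sigma_1')+eg(\Sigma_2')\le eg(\Sigma)=eg(G)$. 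The main obstacle is exactly this last step: one must show that the face $\phi$ and the cutting curve(s) can be chosen so that $x$ and $y$ are correctly placed on both sides, with no topology double-counted or lost — and this is precisely where the hypothesis $|E_1\cap E_2|=1$, rather than $0$, is used, since amalgamation over two vertices with no connecting edge need not be additive. This bookkeeping, together with the reduction to the connected case, is the content of \cite[Theorem 1]{Add}, of which \Cref{addgenus} is the special case where the shared subgraph is a single edge, and the quickest route is simply to invoke it.
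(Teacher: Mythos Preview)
The paper does not prove this theorem at all: it is stated, with the remark immediately preceding it, as a special case of \cite[Theorem 1]{Add}, and no argument is given. Your proposal ultimately lands in the same place --- for the lower bound you explicitly invoke \cite[Theorem 1]{Add} --- so in that sense your ``proof'' and the paper's coincide: both defer to the cited additivity result.

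What you add beyond the paper is a self-contained and correct argument for the easy direction $eg(G)\le eg_1+eg_2$ (the disk-removal-and-gluing construction is standard and your Euler-characteristic count is right), together with a sketch of how the lower bound \emph{ought} to go. That sketch is honest about its gaps: the reduction to $G_2-\{x,y\}$ connected, the claim that $G_2-e$ lies in a single face of $G_1\hookrightarrow\Sigma$, and the bookkeeping when capping off the boundary components of $\phi$ are all genuinely delicate, and you correctly identify that this is exactly where the shared edge (rather than merely two shared vertices) is essential. So nothing in the proposal is wrong, but as a standalone proof it is incomplete for the same reason the paper doesn't attempt one --- the substantive content is in \cite{Add}.
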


\subsection{Rigidity of graphs}

Given a simple graph $G$, we consider its vertex set embedded in $\mathbb{R}^d$ and interpret its edges as length constraints. If edge lengths are fixed under any motion of the vertices, we get a system of quadratic equations. Regarding the coordinates of the vertices as differentiable function of time we can take the derivative of the system of constraint equations. The Jacobian matrix (up to scaling) is called the  \emph{rigidity matrix} $R(G,p)$. $R(G,p)$ is an $|E|\times d|V|$ matrix  in which, for $e=v_iv_j\in E$, the submatrices in row $e$ and columns $v_i$ and $v_j$ are $p(v_i)-p(v_j)$ and $p(v_j)-p(v_i)$, respectively, and all other entries are zero. 
We say that $(G,p)$ is \emph{infinitesimally rigid} if $|V|\leq d+1$ and $\rank\, R(G,p)=\binom{|V|}{2}$ or $|V|\geq d+2$ and $\rank\, R(G,p)=d|V|-\binom{d+1}{2}$.

The \emph{$d$-dimensional rigidity matroid} of a graph $G=(V,E)$ is the matroid $\mathcal{R}_d(G)$ on $E$ in which a set of edges $F\subseteq E$ is independent whenever the corresponding rows of $R(G,p)$ are linearly independent, for some (or equivalently every) generic $p$. 
We use the terms $\mathcal{R}_d$-\emph{independent} and $\mathcal{R}_d$-\emph{circuit} to refer to the graphs induced by independent sets and circuits of $\mathcal{R}_d$, and $r_d$ for its rank function. We also say that $G$ is $\mathcal{R}_d$-\emph{rigid} if $r_d(G)=d|V|-\binom{d+1}{2}$.

In what follows we need some well known results about $\mathcal{R}_d$-independence. The first is an easy necessary condition. A graph $G=(V,E)$ is $(k,l)$-\emph{sparse} if for every subset $X\subset V$, with at least $k$ elements, the number of edges in the subgraph of $G$ induced by $X$ is at most $k|X|-l$. If $G$ is $(k,l)$-sparse and $|E|=k|V|-l$ then $G$ is said to be $(k,l)$-\emph{tight}.

\begin{lemma}[{\cite{Maxwell}}]\label{lem:max}
Let $G$ be $\mathcal{R}_d$-independent. Then $G$ is $(d,\binom{d+1}{2})$-sparse.  
\end{lemma}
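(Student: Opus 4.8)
The plan is to derive the count directly from the rigidity matrix, exploiting that matroid independence is hereditary in order to reduce to a pure edge bound. First I would record the reduction: since $\mathcal{R}_d(G)$ is a matroid on $E$ and $G$ is $\mathcal{R}_d$-independent, every subset of edges is independent; in particular, for each $X\subseteq V$ the induced subgraph $G[X]$ is $\mathcal{R}_d$-independent, because the rows of $R(G[X],p)$ are a subset of the rows of $R(G,p)$ once one deletes the (zero) columns indexed by $V\setminus X$. Hence it suffices to prove the uniform statement: every $\mathcal{R}_d$-independent graph $H$ on $m\geq d$ vertices satisfies $|E(H)|\leq dm-\binom{d+1}{2}$. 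Applying this to each $G[X]$ with $|X|\geq d$ is exactly the assertion that $G$ is $(d,\binom{d+1}{2})$-sparse.

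To prove the edge bound, fix a generic $p\colon V(H)\to\mathbb{R}^d$. By definition of $\mathcal{R}_d$-independence the $|E(H)|$ rows of $R(H,p)$ are linearly independent, so $|E(H)|=\rank R(H,p)$. The matrix $R(H,p)$ has $dm$ columns, and its kernel (the infinitesimal motions of $(H,p)$) contains the space $T$ of trivial infinitesimal motions, namely those of the form $q(v)=Ap(v)+b$ with $A$ a skew-symmetric $d\times d$ matrix and $b\in\mathbb{R}^d$; these lie in the kernel because $x^{\top}Ax=0$ for skew-symmetric $A$. By rank–nullity, $\rank R(H,p)\leq dm-\dim T$. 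For $m\geq d+1$ a generic configuration affinely spans $\mathbb{R}^d$, and then $\dim T=\binom{d+1}{2}=d+\binom{d}{2}$, giving $|E(H)|\leq dm-\binom{d+1}{2}$. The case $m=d$ (and, if one likes, $m=d+1$) needs nothing: $|E(H)|\leq\binom{m}{2}$ and the elementary identity $\binom{m}{2}=dm-\binom{d+1}{2}$ holds for $m\in\{d,d+1\}$.

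The only point requiring real care is the lower bound $\dim T\geq\binom{d+1}{2}$, i.e.\ that the $d$ translational and $\binom{d}{2}$ infinitesimal-rotational fields are linearly independent; this is where genericity (or at least full affine span) is used, and it is the step that would fail for a degenerate realisation such as all points coincident. The argument is short: if $Ap(v)+b=0$ for all $v$, then $A$ annihilates every difference $p(v)-p(w)$, hence annihilates a spanning set of $\mathbb{R}^d$, forcing $A=0$ and then $b=0$, so the parametrisation $(A,b)\mapsto(Ap(v)+b)_{v}$ is injective. I would also note that $\rank R(H,p)$ attains its maximum on a dense set of realisations, so ``independent for some generic $p$'' coincides with ``independent for every generic $p$'', justifying the choice of a $p$ with full affine span at the outset.
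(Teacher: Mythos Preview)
Your argument is correct and is the standard proof of this classical fact. Note, however, that the paper does not give its own proof of this lemma: it is stated with a citation to Maxwell and used as a black box, so there is no comparison to make between your approach and the paper's.
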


This lemma gives a simple necessary condition that narrows down the number of non-planar graphs in each of our apex families. Comparing \Cref{tab:all-apex} with \Cref{tab:sparse-apex}, which gives statistics for non-planar $(3,6)$-sparse graphs, illustrates. For example, $82.55\%$ of the 10 vertex apex graphs are $(3,6)$-sparse.

\begin{table}[ht]
    \centering
    \begin{tabular}{rrrrrr}
         \toprule
         $|V|$ & non-planar & apex & critically apex & edge-apex & critically edge-apex\\\midrule
         6 & 7 & 7 & 7 & 7 & 2 \\
         7 & 135 & 133 & 39 & 121 & 4 \\
         8 & 3637 & 3512 & 257 & 3000 & 10 \\
         9 & 128411 & 115999 & 1309 & 83349 & 24 \\
         10 & 6003893 & 4555219 & 6083 & 2463215 & 51 \\
         \bottomrule
    \end{tabular}
    \caption{Number of non-planar connected $(3,6)$-sparse graphs with different apex properties \cite{DataApexSparse}.}
    \label{tab:sparse-apex}
\end{table}

A graph $G'$ is said to be obtained from another graph $G$ by a \emph{($d$-dimensional) 0-extension}
if $G=G'-v$ for a vertex  $v\in V(G')$ with $d_{G'}(v)=d$; or a \emph{($d$-dimensional) 1-extension} if $G=G'-v+xy$ for a vertex $v\in V(G')$ with $d_{G'}(v)=d+1$ and $x,y\in N_{G'}(v)$.
The inverse operations of 0-extension and 1-extension are called \emph{0-reduction} and \emph{1-reduction}, respectively.

\begin{lemma}[{\cite[Lemma 11.1.1, Theorem 11.1.7]{Wlong}}]\label{lem:01ext}
Let $G$ be $\mathcal{R}_d$-independent and let $G'$ be obtained from $G$ by a 0-extension or a 1-extension. Then $G'$ is $\mathcal{R}_d$-independent.
\end{lemma}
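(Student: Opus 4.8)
The plan is to argue directly with the rigidity matrix $R(\cdot,\cdot)$, using that a graph $H$ is $\mathcal{R}_d$-independent precisely when the rows of $R(H,p')$ are linearly independent for \emph{some} realisation $p'$, since the set of $p'$ for which they are dependent is a Zariski-closed subset of $(\mathbb{R}^d)^{V(H)}$ which is proper as soon as it is a proper subset. Thus in each case it suffices to build one convenient realisation $p'$ of $G'$ out of a generic realisation $p$ of $G$ and to check that $R(G',p')$ has full row rank. For a $0$-extension, write $G=G'-v$ with $N_{G'}(v)=\{x_1,\dots,x_d\}$, extend $p$ to $p'$ by putting $v$ at a generic point, and observe that $R(G',p')$ consists of the rows of $R(G,p)$ (zero in the $d$ columns indexed by $v$) together with the $d$ rows for the edges $vx_i$; in the $v$-columns only the latter are nonzero, with entries $p'(v)-p(x_i)$, which span $\mathbb{R}^d$ generically, so any dependence among all rows has trivial coefficients on the new rows and then, by hypothesis, trivial coefficients everywhere.

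For the $1$-extension, write $G=G'-v+xy$ with $N_{G'}(v)=\{x,y,z_1,\dots,z_{d-1}\}$ and $xy\notin E(G')$, so that $G'-v=G-xy$. Keep the vertices of $G-xy$ at a generic $p$ and place $v$ at $p'(v)=t\,p(x)+(1-t)\,p(y)$ for a generic scalar $t$, i.e.\ on the line through $p(x)$ and $p(y)$. The crucial identity is that, since $p'(v)-p'(x)=(1-t)(p(y)-p(x))$ and $p'(v)-p'(y)=t(p(x)-p(y))$ are both parallel to $p(y)-p(x)$, the row combination $t\cdot(\text{row }vx)+(1-t)\cdot(\text{row }vy)$ of $R(G',p')$ vanishes in all $v$-columns and equals $t(1-t)$ times the row of $R(G,p)$ indexed by the edge $xy$.

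Performing this (rank-preserving, as $1-t\neq 0$) row operation replaces $R(G',p')$ by a matrix whose rows are: the rows for the edges of $G-xy$; one row equal to a nonzero scalar multiple of the $xy$-row of $R(G,p)$; and the $d$ rows $vx,vz_1,\dots,vz_{d-1}$, which are now the only rows touching the $v$-columns. As before, generically the $v$-column entries of these $d$ rows are linearly independent --- one checks that $\{p(y)-p(x),\,p'(v)-p'(z_1),\dots,p'(v)-p'(z_{d-1})\}$ spans $\mathbb{R}^d$ for generic $p$ and $t$, which is where the hypotheses $d_{G'}(v)=d+1$ and the distinctness of $x,y,z_1,\dots,z_{d-1}$ are used --- so any dependence among all rows is supported on the first two groups, i.e.\ on scalar multiples of the rows of $R(G,p)$, and is therefore trivial. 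Hence $R(G',p')$ has full row rank and $G'$ is $\mathcal{R}_d$-independent. The main obstacle is precisely the $1$-extension: choosing the right, partially non-generic, placement of $v$ so that the elementary row operation reproduces exactly the missing $xy$-row of $R(G,p)$, and then justifying that this special realisation still certifies generic independence; the accompanying general-position check for the $v$-columns is routine but must genuinely invoke the degree and distinctness hypotheses.
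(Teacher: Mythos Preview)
Your argument is correct and is the standard proof of this classical lemma; the paper itself does not give a proof but simply cites Whiteley's lecture notes, where essentially the same argument (placing the new vertex on the line through $p(x)$ and $p(y)$ and recovering the $xy$-row by a row operation) appears. One small remark: your verification that the $d$ vectors in the $v$-columns span $\mathbb{R}^d$ is cleanest phrased as you implicitly do, namely that the affine span of $\{p'(v),p(x),p(z_1),\dots,p(z_{d-1})\}$ equals that of $\{p(x),p(y),p(z_1),\dots,p(z_{d-1})\}$ since $p'(v)$ lies on the line through $p(x),p(y)$ with $t\neq 0,1$.
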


More lemmas we will need are as follows.

\begin{lemma}[{\cite[Lemma 11.1.9]{Wlong}}]\label{lem:intbridge}
Let $G_1$, $G_2$ be subgraphs of a graph $G$ and suppose that $G=G_1\cup G_2$.
\begin{enumerate}
\item\label{it:intbridge:rig} 
	If $|V(G_1)\cap V(G_2)|\geq d$ and $G_1,G_2$ are $\mathcal{R}_d$-rigid then $G$ is $\mathcal{R}_d$-rigid.
\item\label{it:intbridge:indep} 
	If $G_1\cap  G_2$ is $\mathcal{R}_d$-rigid and $G_1,G_2$ are $\mathcal{R}_d$-independent then $G$ is $\mathcal{R}_d$-independent.
\item\label{it:intbridge:rank} 
	If $|V(G_1)\cap V(G_2)| \leq d-1$, $u\in V(G_1)-V(G_2)$ and $v\in V(G_2)-V(G_1)$ then
	$r_d(G+uv)=r_d(G)+1$.
\end{enumerate}
\end{lemma}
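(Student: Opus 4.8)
The plan is to work throughout with a single generic realisation $p$ of the relevant vertex set and to pass to the flex space $\ker R(H,q)$ of each subgraph $H$ (with $q=p|_{V(H)}$), which I will denote $\mathrm{fl}(H)$; write $\mathrm{tr}(q)$ for the space of trivial infinitesimal motions of a point set $q$ (restrictions to $q$ of infinitesimal rigid motions $x\mapsto Sx+t$ of $\mathbb{R}^d$, $S$ skew-symmetric). The facts I will use freely are: $\mathrm{tr}(q)\subseteq\mathrm{fl}(H)$ always; $H$ is $\mathcal{R}_d$-rigid iff $\mathrm{fl}(H)=\mathrm{tr}(q)$; $\dim\mathrm{fl}(H)=d|V(H)|-r_d(H)$; a flex of $H$ restricts on any subgraph to a flex of that subgraph; and the restriction of an infinitesimal rigid motion of $\mathbb{R}^d$ to any point set is a trivial motion of that set. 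For part (i) I would take any $\mu\in\mathrm{fl}(G)$; since $G_1$ and $G_2$ are $\mathcal{R}_d$-rigid, $\mu$ agrees on $V(G_1)$ with some $x\mapsto S_1x+t_1$ and on $V(G_2)$ with some $x\mapsto S_2x+t_2$, and on $W:=V(G_1)\cap V(G_2)$ these coincide, so $S:=S_1-S_2$ is skew-symmetric with $S(p(w)-p(w'))=0$ for all $w,w'\in W$. As $p$ is generic and $|W|\geq d$ the differences $p(w)-p(w')$ span a subspace of dimension at least $d-1$, forcing $\rank S\leq 1$; the rank of a skew-symmetric matrix being even, $S=0$, hence $t_1=t_2$, so $\mu$ is trivial and $\mathrm{fl}(G)=\mathrm{tr}(p)$, i.e. $G$ is $\mathcal{R}_d$-rigid.

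For part (ii) set $G_0=G_1\cap G_2$ and $n_i=|V(G_i)|$. Since $G_0$ is a subgraph of the $\mathcal{R}_d$-independent graph $G_1$ it is $\mathcal{R}_d$-independent, and being $\mathcal{R}_d$-rigid by hypothesis we get $\mathrm{fl}(G_0)=\mathrm{tr}(p|_{V(G_0)})$ with $\dim\mathrm{tr}(p|_{V(G_0)})=dn_0-r_d(G_0)=dn_0-|E(G_0)|$. Because every edge of $G$ lies in $G_1$ or $G_2$, restriction identifies $\mathrm{fl}(G)$ with the kernel of the linear map $\Phi\colon \mathrm{fl}(G_1)\oplus\mathrm{fl}(G_2)\to\mathbb{R}^{dn_0}$, $(\mu_1,\mu_2)\mapsto \mu_1|_{V(G_0)}-\mu_2|_{V(G_0)}$. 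Each restriction map $\mathrm{fl}(G_i)\to\mathbb{R}^{dn_0}$ has image inside $\mathrm{fl}(G_0)=\mathrm{tr}(p|_{V(G_0)})$, and already contains all of it (take $\mu_i$ to be a global rigid motion and $\mu_{3-i}=0$), so $\Phi$ is onto $\mathrm{tr}(p|_{V(G_0)})$ and $\dim\mathrm{fl}(G)=\dim\mathrm{fl}(G_1)+\dim\mathrm{fl}(G_2)-\dim\mathrm{tr}(p|_{V(G_0)})$. Substituting $\dim\mathrm{fl}(G_i)=dn_i-|E(G_i)|$ (independence of $G_i$), the value of $\dim\mathrm{tr}(p|_{V(G_0)})$ above, $|V(G)|=n_1+n_2-n_0$ and $|E(G)|=|E(G_1)|+|E(G_2)|-|E(G_0)|$ gives $\dim\mathrm{fl}(G)=d|V(G)|-|E(G)|$, hence $r_d(G)=|E(G)|$ and $G$ is $\mathcal{R}_d$-independent.

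For part (iii) it suffices to produce, for a generic $p$, a flex $\mu$ of $(G,p)$ with $\langle\mu(u)-\mu(v),\,p(u)-p(v)\rangle\neq 0$, since then the $uv$-row of $R(G+uv,p)$ is not in the row span of $R(G,p)$, giving $r_d(G+uv)=r_d(G)+1$ (the reverse inequality being trivial). Let $W=V(G_1)\cap V(G_2)$; fixing $w_0\in W$, the vectors $p(w)-p(w_0)$, $w\in W$, span a subspace $U$ of dimension at most $|W|-1\le d-2$, so (again using that skew-symmetric matrices have even rank, and that $\dim U^\perp\geq 2$) there is a nonzero skew-symmetric $S$ with $S|_U=0$. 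Define $\mu$ on $V(G)=V(G_1)\cup V(G_2)$ by $\mu(x)=S(p(x)-p(w_0))$ for $x\in V(G_1)$ and $\mu(x)=0$ for $x\in V(G_2)$; this is consistent on $W$ since $S|_U=0$, and it is a flex of $(G,p)$ because on each $G_i$ it agrees with an infinitesimal rigid motion (if $W=\emptyset$, use a nonzero translation on $V(G_1)$ instead). Finally $\mu(u)-\mu(v)=S(p(u)-p(w_0))$, and skew-symmetry of $S$ gives $\langle\mu(u)-\mu(v),p(u)-p(v)\rangle=-(p(v)-p(w_0))^{\top}S(p(u)-p(w_0))$; choosing the coordinates of $p$ on $W$ generically first, and then $p(u),p(v)$ and the remaining coordinates generically over that field, the polynomial $b^{\top}Sa$ in $a=p(u)-p(w_0)$, $b=p(v)-p(w_0)$ is a nonzero polynomial (because $S\neq 0$), so it does not vanish.

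I expect part (iii) to be the main obstacle. The subtlety is that $G_1$ and $G_2$ may individually be $\mathcal{R}_d$-rigid, so the witnessing flex of $(G,p)$ cannot be chosen to vanish on $V(G_1)\cap V(G_2)$; it must instead be a genuinely nontrivial rigid-type motion of $G_1$ relative to a frozen $G_2$, which exists precisely because $|V(G_1)\cap V(G_2)|\le d-1$ leaves a nonzero skew-symmetric matrix annihilating all the shared directions, and one still has to verify — via the polynomial non-vanishing argument above — that this motion actually changes the length of $uv$ at a generic realisation. Parts (i) and (ii) are comparatively routine once everything is phrased in terms of flex spaces, the only real inputs being that a flex restricted to a rigid subgraph is trivial and that trivial motions extend from any subconfiguration.
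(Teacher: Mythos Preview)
The paper does not supply its own proof of this lemma; it is quoted as a background result from Whiteley's survey, so there is no in-paper argument to compare against. Your proof is correct and is essentially the standard flex-space argument one would give for these facts.

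A couple of minor remarks. In part~(iii), the step ``$b^{\top}Sa$ is a nonzero polynomial, so it does not vanish'' silently uses that $S$ can be chosen with entries in the field $\mathbb{Q}(p|_W)$ (the space of skew matrices annihilating $U$ is defined over this field and has positive dimension), so that $-b^{\top}Sa$ is a genuine nonzero polynomial over $\mathbb{Q}(p|_W)$ in the algebraically independent coordinates of $p(u),p(v)$; you might make this explicit. Also in part~(i), your reduction implicitly uses that if $\mu'|_{V(G_1)}$ is trivial then it extends to some global rigid motion $x\mapsto Sx+t$, which is true but is worth saying when $|V(G_1)|\le d$ since the representation is then non-unique; the cleanest phrasing is to first subtract the rigid motion coming from $G_2$ so that $\mu'|_{V(G_2)}=0$, and then argue on $G_1$ exactly as you do. None of this affects the validity of your argument.
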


\begin{lemma}[{\cite{Coning,GGJ}}]\label{lem:cone}
Let $G'$ be the cone of a graph $G=(V,E)$. That is $G'$ is obtained from $G$ by adding one new vertex and joining it to every vertex of $G$. Then:
\begin{enumerate}
    \item $G$ is $\mathcal{R}_d$-independent if and only if $G'$ is $\mathcal{R}_{d+1}$-independent. 
    \item $G$ is a $\mathcal{R}_d$-circuit if and only if $G'$ is a $\mathcal{R}_{d+1}$-circuit. 
\end{enumerate}   
\end{lemma}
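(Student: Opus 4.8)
The plan is to run the classical coning argument: out of a generic realisation $p$ of $G$ we build one very particular realisation $\hat p$ of $G'=G\ast v_0$ (where $v_0$ is the new apex vertex), compare $R_{d+1}(G',\hat p)$ with $R_d(G,p)$ by elementary row/column operations, and then upgrade the resulting rank identity to the full matroid statement using projective invariance of the generic rigidity matroid.

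Concretely, fix a generic $p\colon V\to\mathbb{R}^d$ and set $\hat p(v)=(p(v),1)\in\mathbb{R}^{d+1}$ for $v\in V$ and $\hat p(v_0)=0$. Order the rows of $R_{d+1}(G',\hat p)$ so the $|V|$ spoke rows $v_0v$ come first, and order the columns so the $(d{+}1)$-block of $v_0$ comes first, followed by the $(d{+}1)$-block of each $v\in V$. Two facts drive the computation. First, on the $|V|$ columns recording the ``last coordinate'' of the blocks of vertices of $V$, the spoke rows form the identity matrix, while every non-spoke row $uv$ ($u,v\in V$) vanishes there, since $\hat p(u)-\hat p(v)=(p(u)-p(v),0)$. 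Second, using this identity block to clear, by column operations, all remaining entries of the spoke rows leaves the non-spoke rows unchanged (they were zero on those $|V|$ columns) and supported on the first $d$ columns of each block of $V$, where they are literally $R_d(G,p)$. Hence $\rank R_{d+1}(G',\hat p)=|V|+\rank R_d(G,p)=|V|+r_d(G)$; more precisely, a set $F\cup\{\text{all spokes}\}$ of rows is independent in $R_{d+1}(G',\hat p)$ if and only if $F$ is independent in $R_d(G,p)$.

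Since the generic rank is the maximum rank over all realisations, this already gives $r_{d+1}(G')\ge r_d(G)+|V|$, hence the ``easy'' halves. If $G$ is $\mathcal{R}_d$-independent then $r_{d+1}(G')\ge|E(G)|+|V|=|E(G')|$, so $G'$ is $\mathcal{R}_{d+1}$-independent; and if $G'$ is an $\mathcal{R}_{d+1}$-circuit then $r_{d+1}(G')<|E(G')|$ forces $r_d(G)<|E(G)|$ (so $G$ is $\mathcal{R}_d$-dependent), while for each $e\in E(G)$ the subgraph $G'-e=(G-e)\ast v_0$ is independent, hence so is its subgraph $G-e$; thus $G$ is an $\mathcal{R}_d$-circuit. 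What remains is the reverse inequality $r_{d+1}(G')\le r_d(G)+|V|$ — equivalently, that every realisation $q$ of $G'$ carries at least $|E(G)|-r_d(G)$ independent equilibrium stresses — and, for the circuit direction, that when $G$ is an $\mathcal{R}_d$-circuit the (now unique, as $r_{d+1}(G')=|E(G')|-1$) $\mathcal{R}_{d+1}$-circuit inside $G'$ uses every spoke. Both follow from projective invariance of the generic $d$-dimensional rigidity matroid: translating the apex of $q$ to the origin and centrally projecting $V(G)$ onto a generic affine hyperplane $H\cong\mathbb{R}^d$ yields a realisation $\pi$ of $G$, and the $\ge|E(G)|-r_d(G)$ independent stresses of $(G,\pi)$ lift — via the projective rescaling of stress coefficients, the spoke coefficients being fixed by equilibrium at $v_0$ — to independent stresses of $(G',q)$ whose supports include all spokes. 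This is the content of \cite{Coning,GGJ}, which we invoke.

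The main obstacle is precisely this last input. The realisation $\hat p$ is highly non-generic for $G'$ (all of $V(G)$ lies on a hyperplane), so while it computes the generic \emph{rank} of $G'$ correctly once the reverse inequality is known, it does not reflect the generic \emph{matroid} of $G'$ — for instance its equilibrium stresses never charge the spokes — and so it cannot by itself give the circuit statement, nor even the dependent-to-dependent direction of (i). Supplying the stress-lifting under central projection from the apex is the one genuinely non-formal step; everything else is the bookkeeping above. (One can alternatively settle easy cases of the circuit direction directly from \Cref{lem:01ext}: if $G$ is an $\mathcal{R}_d$-circuit with a vertex $w$ of degree $d+1$, then $G'-v_0w$ is a $(d{+}1)$-dimensional $0$-extension of the independent graph $(G-w)\ast v_0$, hence independent; but circuits with minimum degree $\ge d+2$ occur when $d\ge 3$, so this does not replace the projective argument in general.)
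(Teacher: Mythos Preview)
The paper does not prove \Cref{lem:cone}; it is stated as a cited result from \cite{Coning,GGJ} with no accompanying argument. Your sketch is therefore strictly more than what the paper supplies, and it is a faithful outline of the standard coning proof from those references: the block computation with the special realisation $\hat p$ giving $\rank R_{d+1}(G',\hat p)=|V|+r_d(G)$ is correct and yields the upward direction of (i) immediately, and you correctly isolate the one substantive step --- the reverse inequality and the circuit correspondence --- as requiring the projective/stress-lifting argument, which is exactly what \cite{Coning,GGJ} provide. Your self-diagnosis that $\hat p$ is too degenerate to witness the generic matroid of $G'$ (only its rank) is accurate and is precisely why the projective input cannot be avoided. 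In short: the proposal is sound, matches the literature argument, and goes well beyond the paper, which simply quotes the result.
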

 
An \emph{equilibrium stress} $\omega$ of a framework $(G,p)$ is a vector in the cokernel of $R(G,p)$. In other words it is an assignment of weights to the edges of $G$ that are in equilibrium in the framework $(G,p)$. Using stresses we can say that $G$ is $\mathcal{R}_d$-independent if it has no equilibrium stress and an $\mathcal{R}_d$-circuit if it has a unique equilibrium stress (up to scale) which is non-zero on every edge of $G$.

An equilibrium stress gives rise to the \emph{stress matrix} $\Omega$, a weighted laplacian matrix where the off-diagonal $(i,j)$-entry is the negative of the element of $\omega$ corresponding to the edge $v_iv_j$ (or 0 if that edge is not present in $G$) and the diagonal is the sum of the weights for edges incident to the vertex. We say that $G$ admits: a \emph{full rank stress} if there exists a generic framework $(G,p)$ in which some non-zero equilibriumn stress gives rise to a stress matrix of rank $|V|-d-1$; and a
\emph{PSD stress} if there exists a generic framework $(G,p)$ in which some non-zero equilibriumn stress gives rise to a stress matrix which is positive semi-definite. 

Note that the first of these two properties is generic, that is if a generic framework $(G,p)$ has a full rank stress then so does any other generic framework of $G$, but the second is not a generic property (one generic framework of $G$ can be PSD and another generic framework of $G$ can be indefinite). We conclude this section by noting the fundamental result that global $\mathcal{R}_d$-rigidity is a generic property \cite{gortler2010characterizing}.

\section{Edge-apex graphs}
\label{sec:edge-apex}

We first prove a characterisation of $\mathcal{R}_3$-rigidity for edge-apex graphs.
We will make use of \Cref{structure} and the following theorem of Whiteley.

\begin{theorem}[{\cite[Theorem 5.3]{MR927685}}]\label{thm:whiteleytriangplusedge}
Let $G$ be a 4-connected graph obtained from a plane triangulation by adding one edge. Then $G$ is a rigid $\mathcal{R}_3$-circuit.   
\end{theorem}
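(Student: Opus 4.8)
The plan is to reduce the statement to Gluck's theorem (\Cref{thm:gluck}) together with the matroid theory of a single edge addition, and then to settle the remaining rigidity claims by induction along a contraction sequence of the triangulation.

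Write $T$ for the plane triangulation and $e=uv$ for the added edge, and set $n=|V|$. By Euler's formula $|E(T)|=3n-6$, and by \Cref{thm:gluck} $T$ is both $\mathcal{R}_3$-independent and $\mathcal{R}_3$-rigid; since $n\ge 5$ the rank of $\mathcal{R}_3$ equals $3n-6$, so $T$ is a basis. As $T\subseteq G$ is rigid, $G$ is rigid, which already gives one half of the statement. Since $G=T+e$ has $3n-5$ edges and contains the basis $T$, its space of equilibrium stresses is one-dimensional; let $\omega$ span it. The support of $\omega$ is the unique circuit contained in $G$, so $G$ is itself a circuit exactly when $\omega$ is nowhere zero. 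Moreover $\omega_e\ne 0$, since otherwise $\omega$ would be a non-trivial stress of the independent graph $T$.

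It remains to show $\omega$ vanishes on no edge. By the fundamental-circuit description, $\omega_f\ne 0$ if and only if $(T-f)+e$ is independent, and as this graph again has $3n-6$ edges, independence here is the same as being isostatic. Thus the circuit claim is equivalent to the assertion that $T-f+e$ is $\mathcal{R}_3$-rigid for \emph{every} edge $f$ of $T$. Dually, $T-f$ is planar, hence independent, with one edge fewer than a basis, so it carries a one-dimensional space of non-trivial infinitesimal flexes, and $T-f+e$ is rigid precisely when that flex alters the length of $uv$ to first order. A clean necessary condition comes for free: equilibrium at any vertex $w$ forces the edge directions of the stressed edges at $w$ to be linearly dependent, so---three generic vectors in $\mathbb{R}^3$ being independent---each vertex is incident to $0$ or at least $4$ stressed edges; since $G$ is $4$-connected it has minimum degree at least $4$, consistent with (and needed for) a nowhere-zero stress.

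To upgrade this local condition to the full claim I would induct on $n$. The base case is $K_5$ (a triangular bipyramid plus the edge joining its two apices), the smallest $\mathcal{R}_3$-circuit, which is verified directly. For the inductive step I would use that every generic triangulation is obtained from smaller ones by vertex splits, an operation preserving $\mathcal{R}_3$-isostaticity in the spirit of \Cref{lem:01ext}, and track $e$ through each split. The main obstacle will be exactly this bookkeeping. Note that $T$ itself need not be $4$-connected---for $T$ the bipyramid, $G=K_5$ is $4$-connected while $T$ is not---so its separating triangles, encoded by the structure tree $S(T)$ of \Cref{structure}, must each be bridged by $e$, and one has to show both that $4$-connectivity of $G$ forces a workable placement of $e$ relative to $S(T)$ and that it is genuinely responsible for $\omega$ remaining non-zero on every edge (the degree-$4$ condition alone being necessary but far from sufficient). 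Controlling $e$ when one of its endpoints is the split vertex, and maintaining $4$-connectivity along the reduction, are the delicate points; a purely geometric alternative---arguing that the deletion flex of $T-f$ folds about $f$ and must pull $u$ and $v$ apart unless $f$ lies on a $3$-cut of $G$---founders on making ``folding'' rigorous at a generic, non-convex realisation.
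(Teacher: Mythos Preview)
The paper does not supply its own proof of this theorem: it is quoted from Whiteley \cite[Theorem~5.3]{MR927685} and used as a black box in the proof of \Cref{t:ear}. So there is no in-paper argument to compare against.

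Your reduction is correct and standard: $T$ is a basis of $\mathcal{R}_3$ by \Cref{thm:gluck}, so $G=T+e$ contains a unique circuit $C$, and $G$ is itself that circuit iff the fundamental stress $\omega$ is nowhere zero, iff $T-f+e$ is rigid for every $f\in E(T)$. The observation that every vertex in the support of $\omega$ has degree at least $4$ is also correct and is the right hint that $4$-connectivity is the relevant hypothesis.

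However, the proposal is not a proof. You yourself label the inductive step ``the main obstacle'' and then list the difficulties---tracking $e$ through vertex splits, maintaining $4$-connectivity along the reduction, handling the separating triangles recorded by $S(T)$---without resolving any of them. The final paragraph is a programme, not an argument: you have reduced the theorem to the family of rigidity claims ``$T-f+e$ is rigid for each $f$'' and verified only the base case $K_5$. The alternative ``folding'' heuristic is, as you concede, not made rigorous either.

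Whiteley's actual argument does proceed via vertex splitting, but the content of his theorem is precisely the case analysis you have deferred: one must exhibit, in any $4$-connected triangulation-plus-edge on more than five vertices, a contractible edge of $T$ whose contraction yields a smaller graph of the same type (or otherwise directly certify the stress is non-zero there). That step is where $4$-connectivity is genuinely used, and it is not short. Your write-up stops exactly where that work begins.
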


\begin{theorem}\label{t:ear}
    Let $G$ be an edge-apex graph. Then $G$ is $\mathcal{R}_3$-independent if and only if it is $(3,6)$-sparse.
\end{theorem}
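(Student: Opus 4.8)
The forward implication is immediate from \Cref{lem:max}, so the substance is the converse: I must show that an edge-apex $(3,6)$-sparse graph $G$ is $\mathcal{R}_3$-independent. Write $G=H+uv$ where $H:=G-uv$ is planar; by \Cref{thm:gluck}, $H$ is $\mathcal{R}_3$-independent, so every $\mathcal{R}_3$-circuit contained in $G$ must use the edge $uv$. The plan is to assume $G$ is $\mathcal{R}_3$-dependent, fix an $\mathcal{R}_3$-circuit $C\subseteq G$ (so that $uv\in E(C)$), and force $P:=C-uv\subseteq H$ — a planar, $\mathcal{R}_3$-independent graph — to be a triangulation. Once this is done, $C$ has $3|V(C)|-5$ edges on $|V(C)|\ge 4$ vertices (the vertices $u,v$ are non-adjacent in the triangulation $P$ because $uv\notin E(H)$), which contradicts the $(3,6)$-sparsity of $G\supseteq C$.

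To carry this out, I would embed $P$ in a triangulation $\widehat T\supseteq P$. Then $uv\notin E(\widehat T)$, since otherwise $C\subseteq\widehat T$ would contradict $\mathcal{R}_3$-independence of $\widehat T$ (\Cref{thm:gluck}). The key step, which I will call $(\star)$, is: \emph{if $T$ is a triangulation with distinct $u,v\in V(T)$ and $uv\notin E(T)$, then $T$ contains a triangulation $T'$ with $u,v\in V(T')$ for which $T'+uv$ is an $\mathcal{R}_3$-circuit.} Granting $(\star)$ for $\widehat T$, the graphs $C$ and $T'+uv$ are both $\mathcal{R}_3$-circuits through $uv$ inside $\widehat T+uv$; since $(C-uv)\cup T'\subseteq\widehat T$ is $\mathcal{R}_3$-independent and hence contains no circuit, the circuit elimination axiom forces $C=T'+uv$, so $P=C-uv=T'$ is a triangulation, as required.

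Proving $(\star)$ is where I expect the real work, and I would argue by induction on the number of 4-blocks in the structure tree $S(T)$ of \Cref{structure}. If $T$ has a single 4-block, it is a 4-connected triangulation (it cannot be $K_4$, since $uv\notin E(T)$), and \Cref{thm:whiteleytriangplusedge} says precisely that $T+uv$ is an $\mathcal{R}_3$-circuit, so $T'=T$. For the inductive step, the 4-blocks containing $u$, and those containing $v$, each span a subtree of $S(T)$; if these subtrees share a 4-block $B$, then $B$ is 4-connected (not $K_4$, since $uv\notin E(T)$) and $T'=B$ works again by \Cref{thm:whiteleytriangplusedge}. The remaining case is when these two subtrees are disjoint.

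In that case the $S(T)$-path joining them passes through a separating triangle $f$, giving a decomposition $T=T_A\,\Delta_f\,T_B$ with $u\in V(T_A)\setminus V(f)$ and $v\in V(T_B)\setminus V(f)$. Let $T'_A\subseteq T_A$ and $T'_B\subseteq T_B$ be the minimal sub-triangulations containing, respectively, $u$ and the face $f$, and $v$ and $f$ (each is a $\Delta$-join of 4-blocks along a path in $S(T_A)$, resp.\ $S(T_B)$), and set $T':=T'_A\,\Delta_f\,T'_B$, a triangulation with $u,v\in V(T')$. Since $T'$ is a triangulation it is $\mathcal{R}_3$-rigid, so $uv\in\operatorname{cl}(E(T'))$ and $T'+uv$ is $\mathcal{R}_3$-dependent; what remains — and this is the technical heart of the proof — is to verify that $T'+uv$ is a \emph{circuit}, i.e.\ that $(T'+uv)-g$ is $\mathcal{R}_3$-independent for every edge $g$. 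For $g=uv$ this is \Cref{thm:gluck}. For $g\in E(T')$, a dependence would produce an $\mathcal{R}_3$-rigid subgraph $R\subseteq T'-g$ containing both $u$ and $v$; since $R$ is planar it is a triangulation by \Cref{thm:gluck}, necessarily on at least four vertices (as $u,v$ are non-adjacent in it) and hence 3-connected, so the separating set $V(f)$ induces a triangle of $R$ and $R$ splits as a $\Delta$-join of triangulations across $f$ — contradicting the minimality in the choice of $T'_A$ or $T'_B$; \Cref{lem:intbridge} is the tool used to glue rigid pieces across $f$ in this analysis. Controlling exactly which edge-deletions break the rigid link between $u$ and $v$, through the structure-tree description of these minimal sub-triangulations, is the crux.
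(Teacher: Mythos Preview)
Your approach is essentially the paper's, and your $(\star)$-construction --- the $\Delta$-join of 4-blocks along the shortest structure-tree path between a block containing $u$ and one containing $v$ --- is exactly what the paper builds. The one simplification you are missing is that the paper observes directly that $T'+uv$ is 4-connected: every 3-separator of the triangulation $T'$ is a non-facial triangle, hence one of the gluing faces along the path; by the minimality of the path each such triangle separates $u$ from $v$, so adding the edge $uv$ destroys all 3-cuts. \Cref{thm:whiteleytriangplusedge} then gives immediately that $T'+uv$ is an $\mathcal{R}_3$-circuit, with no edge-by-edge verification needed --- this replaces what you flag as ``the technical heart'' and ``the crux''. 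Your sketched edge-deletion argument is on the right track but incomplete (in particular, when $g$ lies on the separating triangle $f$ your claim that $V(f)$ induces a triangle of $R$ fails), and in any case unnecessary once 4-connectivity is observed. The remaining differences --- the paper triangulates all of $G-e$ rather than just $C-uv$, and appeals to uniqueness of the circuit in a rigid graph plus one edge rather than to circuit elimination --- are cosmetic.
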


\begin{proof}
By \Cref{lem:max} it suffices to prove the sufficiency. 
Since $G$ is edge-apex, it either is planar, in which case it is $\mathcal{R}_3$-independent, or there is an edge $e$ such that $G-e$ is planar. Extend $G-e$ to an edge maximal planar graph by adding edges $e_1, e_2, \ldots e_k$. We denote this triangulation by $G^{\Delta} = G - e +e_1, \ldots, e_k$. 
Note that from $(3,6)$-sparsity we have that $k\geq 1$. We also know, by \Cref{thm:gluck}, that $G^{\Delta}$ is $\mathcal{R}_3$-independent and $\mathcal{R}_3$-rigid, so $G^{\Delta}+e$ contains a unique $\mathcal{R}_3$-circuit. Let $e=uv$. Decompose $G^{\Delta}$ into 4-blocks and identify a block $C_u$ that contains $u$ and a component $C_v$ that contains $v$. In the structure tree  $S(G^{\Delta })$ (from \Cref{structure}) there is a unique path from $C_u$ to $C_v$. 
If $u$ (resp. $v$) is contained in more than one component then we shorten this unique path if possible. Let $C'_u \ldots C'_v$ be the shortest such path in $S(G^{\Delta})$ (note that it could be of length~0), then $H=C'_u \Delta \ldots \Delta C'_v +e$ is a 4-connected subgraph of  $G^{\Delta} +e$ and $H-e$ is a triangulation. 
Hence, $H$ is an $\mathcal{R}_3$-circuit by \Cref{thm:whiteleytriangplusedge}. By $(3,6)$-sparsity, $H$ must contain at least one of the $e_i's$, which means that $G^{\Delta} +e-e_i$ is $\mathcal{R}_3$-independent and hence $G$ is $\mathcal{R}_3$-independent.
\end{proof}

\subsection[k-edge-apex graphs]{$k$-edge-apex graphs}

\begin{corollary}
Suppose that $G=(V,E)$ is $(3,6)$-tight and $k$-edge-apex for some $k\geq 1$. Then rank $R_3(G,p) \geq 3|V|-5-k$.
\end{corollary}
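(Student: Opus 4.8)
The plan is to reduce to the single-edge-apex case already handled by \Cref{t:ear}. Write $G = (V,E)$ with $|E| = 3|V|-6$, and let $F = \{f_1,\dots,f_m\} \subseteq E$ with $m \leq k$ be a set of edges such that $G - F$ is planar; we may assume $m$ is minimal, so in particular $m \leq k$ and every $f_i$ is genuinely needed (though minimality is not strictly necessary for the bound). Consider the intermediate graphs $G_0 = G - F$, and $G_{i} = G_{i-1} + f_i$ for $i = 1,\dots,m$, so that $G_m = G$. The graph $G_0$ is planar, hence $\mathcal{R}_3$-independent by \Cref{thm:gluck}; moreover $G_1 = G_0 + f_1$ is edge-apex, and $G_1$ is a subgraph of the $(3,6)$-tight graph $G$, hence $(3,6)$-sparse, so $G_1$ is $\mathcal{R}_3$-independent by \Cref{t:ear}. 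For the remaining edges I would argue that adding each one increases the rank by at most $1$, which is immediate since adjoining a single edge to any graph changes the rigidity matroid rank by $0$ or $1$ (the new row of the rigidity matrix is either in the span of the old ones or not). Therefore
\begin{equation*}
r_3(G) \;=\; r_3(G_m) \;\geq\; r_3(G_1) - 0 \;+\; \ldots \;\geq\; r_3(G_1) \;=\; |E(G_1)| \;=\; (3|V|-6) - (m-1) \;\geq\; 3|V|-5-k,
\end{equation*}
using $m \leq k$ and the fact that $r_3(G_1)$ equals its edge count because $G_1$ is $\mathcal{R}_3$-independent. Since $r_3(G) = \rank R_3(G,p)$ for generic $p$, this is exactly the claimed bound.

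A cleaner way to organise the same idea, avoiding the telescoping inequality, is: $G$ is obtained from the $\mathcal{R}_3$-independent graph $G_1$ by adding the $m-1$ edges $f_2,\dots,f_m$, and adding $j$ edges to any graph drops the rank deficiency (number of edges minus rank) by at most $j$; since $G_1$ has deficiency $0$, the graph $G$ has deficiency at most $m-1 \leq k-1$, i.e. $r_3(G) \geq |E| - (k-1) = (3|V|-6) - k + 1 = 3|V|-5-k$.

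I do not expect a serious obstacle here: the only inputs are \Cref{thm:gluck}, \Cref{t:ear}, and the elementary fact that one edge changes matroid rank by at most one. The one point requiring a little care is ensuring that $G_1$ — the planar graph plus a single apex edge — is indeed both edge-apex (clear, since $G_1 - f_1 = G_0$ is planar) and $(3,6)$-sparse (inherited from the subgraph relation $G_1 \subseteq G$ with $G$ being $(3,6)$-tight hence $(3,6)$-sparse), so that \Cref{t:ear} applies and gives $G_1$ $\mathcal{R}_3$-independent. Everything else is bookkeeping with $m \leq k$.
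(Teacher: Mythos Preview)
Your proof is correct and uses essentially the same idea as the paper: both arguments rest on \Cref{t:ear} applied to an edge-apex subgraph, together with the elementary fact that adding a single edge changes the rank by at most one. The paper packages this as an induction on $k$ (passing from $k$ to $k-1$ by deleting one apex edge and invoking the hypothesis), whereas you go straight to the base case by stripping off all $m-1$ extra apex edges at once and applying \Cref{t:ear} to $G_1$; your direct route is arguably cleaner and sidesteps the paper's need to find an auxiliary edge $f$ making $G-e+f$ simultaneously $(3,6)$-tight and $(k-1)$-edge-apex.
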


\begin{proof}
When $k=1$ the statement is a trivial reformulation of one direction of  \Cref{t:ear}.
Suppose the lemma holds for all $j<k$ and consider a $(3,6)$-tight $k$-edge-apex graph $G$ for some $k>1$. By definition $G$ contains some edge $e$ such that $G-e$ is $(k-1)$-edge-apex. Since there exists an edge $f$ such that $G-e+f$ is $(k-1)$-edge-apex and $(3,6)$-tight, the inductive hypothesis implies that 
\begin{equation*}
\rank R_3(G-e,p)\geq 3|V|-5-(k-1)-1=3|V|-5-k.\qedhere    
\end{equation*}
\end{proof}

Similarly one can establish the slightly more general statement that if $G$ is $(3,6)$-sparse and $k$-edge-apex, then dim coker $R_3(G,p)\leq k-1$.
The corollary is, in a sense, best possible since it is not true that 2-edge-apex graphs are $\mathcal{R}_3$-independent if and only if they are $(3,6)$-sparse. We propose the following conjecture that would, in a reasonably precise sense, explain which 2-edge-apex $(3,6)$-sparse graphs are $\mathcal{R}_3$-independent.

\begin{conjecture}
The following hold.
\begin{enumerate}
    \item If $G$ is planar and $G+\{e,f\}$ is a $(3,6)$-tight flexible $\mathcal{R}_3$-circuit then $G$ is the 2-sum of two rigid $\mathcal{R}_3$-circuits.
    \item If $G$ is planar and $G+\{e,f\}$ is a flexible $\mathcal{R}_3$-circuit then $G+\{e,f\}$ is $(3,6)$-tight.
\end{enumerate}
\end{conjecture}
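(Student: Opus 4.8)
Parts (i) and (ii) are intertwined, and the natural order is to settle (ii) first and feed it into (i). Write $C:=G+\{e,f\}$. For (ii), observe that since $C$ is an $\mathcal{R}_3$-circuit, $C-g$ is $\mathcal{R}_3$-independent and hence $(3,6)$-sparse (by \Cref{lem:max}) for every edge $g$, so the only sparsity inequality that can fail for $C$ itself is the one for $X=V$; thus $|E(C)|\le 3|V|-5$. As $C$ is also flexible, $|E(C)|=r_3(C)+1\le 3|V|-6$, so it remains only to exclude $|E(C)|\le 3|V|-7$. The plan is to extend the planar graph $G$ to a triangulation $G^{\Delta}=G+\{g_1,\dots,g_k\}$ (with $k=3|V|-6-|E(G)|\ge 3$ in the putative bad case), which is $\mathcal{R}_3$-rigid and independent by \Cref{thm:gluck}, and to use \Cref{structure} together with \Cref{thm:whiteleytriangplusedge} to understand how each single added edge can create a circuit; one then tries to show that if $G$ were this sparse, some proper subgraph of $C$ would already be dependent, contradicting that $C$ is a circuit. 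The awkward point is that $e$ and $f$ need not be coordinated with the triangulation extension, so one probably needs a stress-counting refinement: the unique stress $\omega$ of $C$ restricts to a nonzero assignment on $G$ which is an equilibrium stress off the (at most four) endpoints of $e,f$, and Gluck-independence of the planar $G$ should bound the space of such ``few-force'' stresses and force $|E(G)|\ge 3|V|-8$.

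For (i), granting (ii) we have that $C$ is a $(3,6)$-tight flexible $\mathcal{R}_3$-circuit whose unique equilibrium stress $\omega$ is nonzero on every edge. I would aim to produce a non-adjacent vertex pair $\{x,y\}$ with $C=C[V_A]\cup C[V_B]$, $V_A\cap V_B=\{x,y\}$, and both $C[V_A]$ and $C[V_B]$ $\mathcal{R}_3$-rigid --- a ``rigid $2$-separation''. Given this, $C[V_A]$ and $C[V_B]$ are subgraphs of a circuit, hence $\mathcal{R}_3$-independent and so isostatic, whence $r_3(E(C[V_A]))+r_3(E(C[V_B]))=(3|V_A|-6)+(3|V_B|-6)=3|V|-6=r_3(C)+1$; since $C$ is a circuit it is connected in $\mathcal{R}_3|_C$, so $\mathcal{R}_3|_C$ is the matroid $2$-sum of its two sides, and the standard correspondence between matroid $2$-sums and graph $2$-sums along an edge yields that $C[V_A]+xy$ and $C[V_B]+xy$ are $\mathcal{R}_3$-circuits with $C=(C[V_A]+xy)\oplus_2(C[V_B]+xy)$. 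Each is rigid because adjoining an edge to a rigid graph preserves rigidity, so $C$ is the $2$-sum of two rigid $\mathcal{R}_3$-circuits --- exactly the claimed decomposition (with $K_5\oplus_2 K_5$, the double banana \dbgraph{}, as the prototype). Thus (i) reduces to exhibiting the rigid $2$-separation.

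To find it, I would use that $C$ has exactly one internal degree of freedom (nullity $1$), so its unique non-trivial flex is essentially a relative rotation of one part about an axis; the pair of vertices fixed by this rotation are the natural candidates for $\{x,y\}$, and the fact that $\omega$ is forced to be nonzero on $e$ and $f$ should rule out the degenerate configurations of the cut. Concretely I would analyse the rigid components of $C$ as subgraphs: planarity of $G$ constrains them to be ``planar plus boundedly many edges'', the circuit property forbids a single rigid component covering all of $C$, and a Gluck/Whiteley analysis of the two halves should leave exactly two maximal rigid pieces sharing $\{x,y\}$. The hard part --- and the reason this remains a conjecture --- is precisely this structural step: flexible $\mathcal{R}_3$-circuits are not classified in general, and one must use the planarity of $G$ essentially to force the simple ``two rigid bodies on a common axis'' picture; controlling the rigid-component structure of a planar graph after two edges have been added, in the presence of a global nowhere-zero stress, is where I expect the genuine difficulty to lie.
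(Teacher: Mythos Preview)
The statement is a \emph{conjecture}: the paper does not prove it. After stating it, the paper only observes that if $G+\{e,f\}$ is $(3,6)$-tight then by \Cref{t:ear} it contains a unique $\mathcal{R}_3$-circuit, and then walks through the double-banana example to illustrate how the structure tree of \Cref{structure} interacts with two added edges. There is thus no paper proof to compare your attempt against, and you yourself concede in your final paragraph that the core step is open.

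Your outline does correctly separate the routine from the hard. For (ii), the argument that $C:=G+\{e,f\}$ is $(3,6)$-sparse with $|E(C)|\le 3|V|-6$ is fine; the content of (ii) is the reverse inequality, and there your ``few-force stress'' idea is only a heuristic---no bound of that type is available, and the triangulation/structure-tree route you sketch does not obviously rule out $|E(G)|\le 3|V|-9$. For (i), the reduction to exhibiting a rigid $2$-separation $\{x,y\}$ of $C$ is sound (restricting the unique nowhere-zero stress of $C$ to each side and loading the axial hinge force onto a virtual edge $xy$ does make $C[V_A]+xy$ and $C[V_B]+xy$ rigid $\mathcal{R}_3$-circuits whose $2$-sum is $C$), but producing $\{x,y\}$ is precisely the open problem: there is no \emph{a priori} reason the one-dimensional flex of a $(3,6)$-tight flexible $\mathcal{R}_3$-circuit must be a relative rotation about a line through two of its vertices, and the rigid-component analysis you gesture at is the unresolved structural core of the conjecture.
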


Suppose $G$ is planar and $G+\{e,f\}$ is $(3,6)$-tight. Then $G+e$ is $\mathcal{R}_3$-independent by \Cref{t:ear}. Hence, either $G+\{e,f\}$ is $\mathcal{R}_3$-rigid or it has exactly one non-trivial infinitesimal motion. Hence, $G+\{e,f\}$ contains a unique $\mathcal{R}_3$-circuit. 

    \begin{figure}[ht]
        \centering
        \begin{tikzpicture}
          \coordinate (rr1) at (0,1.5);
          \coordinate (rr2) at (0,-1.5);          
          \node[vertex] (r1) at (rr1) {};
          \node[vertex] (r2) at (rr2) {};
          \node[vertex] (a1) at (-1.1,-0.06) {};
          \node[vertex] (a2) at (1.1,-0.06) {};
          \node[vertex] (b1) at (-0.4,-0.12) {};
          \node[vertex] (b2) at (0.4,-0.12) {};
          \node[vertex] (c1) at (-0.7,-0.3) {};
          \node[vertex] (c2) at (0.7,-0.3) {};
          \draw[edge] (r1)edge(a1) (r1)edge(b1) (r1)edge(c1) (r1)edge(a2) (r1)edge(b2) (r1)edge(c2);
          \draw[edge] (r2)edge(a1) (r2)edge(b1) (r2)edge(c1) (r2)edge(a2) (r2)edge(b2) (r2)edge(c2);
          \draw[edge] (a1)edge(b1) (a1)edge(c1) (b1)edge(c1);
          \draw[edge] (a2)edge(b2) (a2)edge(c2) (b2)edge(c2);
        \end{tikzpicture}   
    \caption{The double banana \dbgraph.}
    \label{fig:doublebanana}
\end{figure}
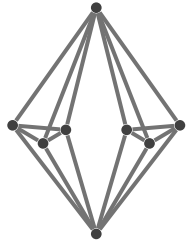

To explain the strategy used to prove  \Cref{t:ear}
and possibly attack the conjecture, we provide the following example. We start with the well known double banana graph \dbgraph, depicted in \Cref{fig:doublebanana}, which is the smallest $(3,6)$-sparse graph that is $
\mathcal{R}_3$-dependent.
\Cref{structtree01fig} shows \dbgraph\ with edges $ac$ and $ac'$ removed and embedded in the plane. Since the graph is not 3-connected, the embedding is not unique. Fix the embedding shown and extend the graph to a triangulation by adding the red edges. We then decompose the triangulation into 4-blocks and see that the structure tree is a path on 5 vertices. Adding the edge $ac$ forms a $\mathcal{R}_3$-circuit with using just two of the blocks, it is a $K_5$. Similarly the edge $ac'$ creates another $K_5$ using two different blocks. Now we have two $\mathcal{R}_3$-circuits intersecting in exactly one red edge and we found a copy of \dbgraph, a $\mathcal{R}_3$-circuit.

\begin{figure}[htb]
\centering
\begin{tikzpicture}
    \begin{scope}
        \node[vertex,label={[labelsty]90:$a$}] (a) at (0,0) {};
        \node[vertex,label={[labelsty]-90:$b$}] (b) at (0,-2) {};
        \node[vertex,label={[labelsty,label distance=-2pt]90:$c$}] (c) at (210:0.8) {};
        \node[vertex,label={[labelsty,label distance=-2pt]90:$c'$}] (cs) at (-30:0.8) {};
        \node[vertex,label={[labelsty,label distance=-2pt]150:$d$}] (d) at (150:2) {};
        \node[vertex,label={[labelsty,label distance=-2pt]30:$d'$}] (ds) at (30:2) {};
        \node[vertex,label={[labelsty]90:$e$}] (e) at (210:0.5) {};
        \node[vertex,label={[labelsty]90:$e'$}] (es) at (-30:0.5) {};
        \draw[edge,colR] (a)--(b) (d)--(ds);
        \draw[edge] (a)--(d) (a)--(ds) (a)--(e) (a)--(es) (b)--(c) (b)--(cs) (b)--(d) (b)--(ds) (b)--(e) (b)--(es) (c)--(d) (cs)--(ds) (c)--(e) (cs)--(es) (e)--(d) (es)--(ds);
    \end{scope}
    \begin{scope}[yshift=-3.5cm]
        \begin{scope}[xshift=-5cm]
            \node[vertex,label={[labelsty,label distance=-2pt]90:$c$}] (c) at (0,0) {};
            \node[vertex,label={[labelsty,label distance=-2pt]-90:$b$}] (b) at (0,-1) {};
            \node[vertex,label={[labelsty,label distance=-4pt]150:$d$}] (tl) at (150:1) {};
            \node[vertex,label={[labelsty,label distance=-4pt]30:$e$}] (tr) at (30:1) {};
            \draw[edge] (c)--(b) (c)--(tl) (c)--(tr) (tr)--(tl) (tl)--(b) (b)--(tr);
        \end{scope}
        \begin{scope}[xshift=-2.5cm]
            \node[vertex,label={[labelsty,label distance=-2pt]90:$e$}] (c) at (0,0) {};
            \node[vertex,label={[labelsty,label distance=-2pt]-90:$b$}] (b) at (0,-1) {};
            \node[vertex,label={[labelsty,label distance=-4pt]150:$d$}] (tl) at (150:1) {};
            \node[vertex,label={[labelsty,label distance=-4pt]30:$a$}] (tr) at (30:1) {};
            \draw[edge] (c)--(b) (c)--(tl) (c)--(tr) (tr)--(tl) (tl)--(b);
            \draw[edge,colR] (b)--(tr);
        \end{scope}
        \begin{scope}[xshift=-0cm]
            \node[vertex,label={[labelsty,label distance=-2pt]90:$a$}] (c) at (0,0) {};
            \node[vertex,label={[labelsty,label distance=-2pt]-90:$b$}] (b) at (0,-1) {};
            \node[vertex,label={[labelsty,label distance=-4pt]150:$d$}] (tl) at (150:1) {};
            \node[vertex,label={[labelsty,label distance=-4pt]30:$d'$}] (tr) at (30:1) {};
            \draw[edge] (c)--(tl) (c)--(tr) (tl)--(b) (b)--(tr);
            \draw[edge,colR] (c)--(b) (tr)--(tl);
        \end{scope}
        \begin{scope}[xshift=2.5cm]
            \node[vertex,label={[labelsty,label distance=-2pt]90:$e'$}] (c) at (0,0) {};
            \node[vertex,label={[labelsty,label distance=-2pt]-90:$b$}] (b) at (0,-1) {};
            \node[vertex,label={[labelsty,label distance=-4pt]150:$a$}] (tl) at (150:1) {};
            \node[vertex,label={[labelsty,label distance=-4pt]30:$d'$}] (tr) at (30:1) {};
            \draw[edge] (c)--(b) (c)--(tl) (c)--(tr) (tr)--(tl) (b)--(tr);
            \draw[edge,colR] (tl)--(b);
        \end{scope}
        \begin{scope}[xshift=5cm]
            \node[vertex,label={[labelsty,label distance=-2pt]90:$c'$}] (c) at (0,0) {};
            \node[vertex,label={[labelsty,label distance=-2pt]-90:$b$}] (b) at (0,-1) {};
            \node[vertex,label={[labelsty,label distance=-4pt]150:$e'$}] (tl) at (150:1) {};
            \node[vertex,label={[labelsty,label distance=-4pt]30:$d'$}] (tr) at (30:1) {};
            \draw[edge] (c)--(b) (c)--(tl) (c)--(tr) (tr)--(tl) (tl)--(b) (b)--(tr);
        \end{scope}
        \foreach \x/\l in {-3.75/bed,-1.25/bad,1.25/bad',3.75/be'd'}
        {
            \draw[{Latex[round]}-{Latex[round]}] ($(\x-0.5,-1)$)to node[labelsty,above] {$\l$} ($(\x+0.5,-1)$);
        }
    \end{scope}
\end{tikzpicture}
\caption{A triangulation and its decomposition into 4-blocks.}\label{structtree01fig}
\end{figure}
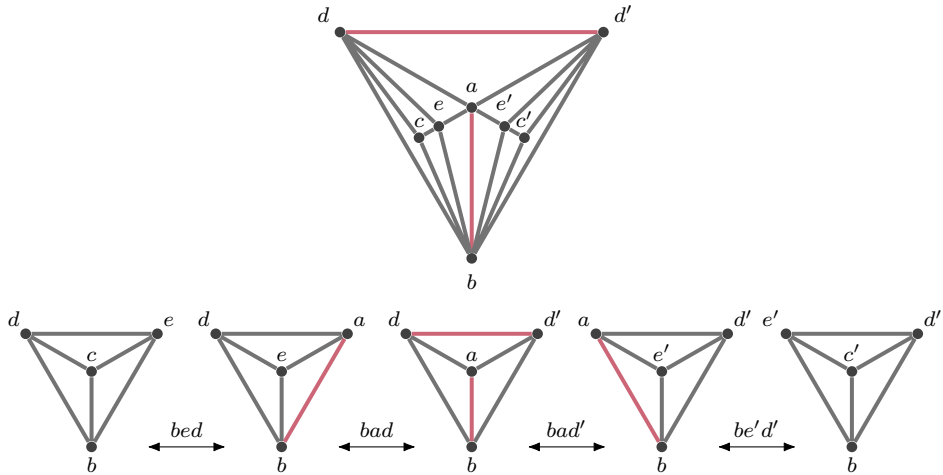

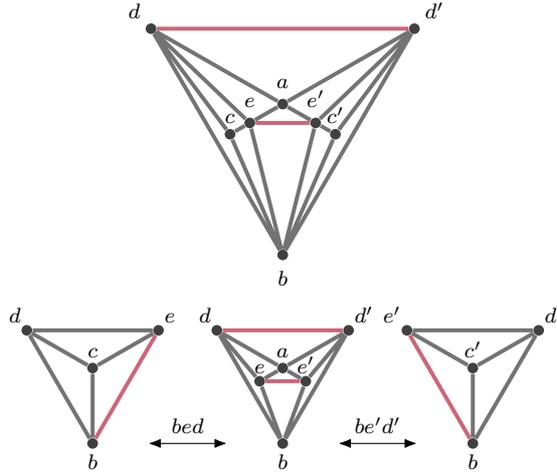
\begin{figure}[htb]
\centering
\begin{tikzpicture}
    \begin{scope}
        \node[vertex,label={[labelsty]90:$a$}] (a) at (0,0) {};
        \node[vertex,label={[labelsty]-90:$b$}] (b) at (0,-2) {};
        \node[vertex,label={[labelsty,label distance=-2pt]90:$c$}] (c) at (210:0.8) {};
        \node[vertex,label={[labelsty,label distance=-2pt]90:$c'$}] (cs) at (-30:0.8) {};
        \node[vertex,label={[labelsty,label distance=-2pt]150:$d$}] (d) at (150:2) {};
        \node[vertex,label={[labelsty,label distance=-2pt]30:$d'$}] (ds) at (30:2) {};
        \node[vertex,label={[labelsty]90:$e$}] (e) at (210:0.5) {};
        \node[vertex,label={[labelsty]90:$e'$}] (es) at (-30:0.5) {};
        \draw[edge,colR] (e)--(es) (d)--(ds);
        \draw[edge] (a)--(d) (a)--(ds) (a)--(e) (a)--(es) (b)--(c) (b)--(cs) (b)--(d) (b)--(ds) (b)--(e) (b)--(es) (c)--(d) (cs)--(ds) (c)--(e) (cs)--(es) (e)--(d) (es)--(ds);
    \end{scope}
    \begin{scope}[yshift=-3.5cm]
        \begin{scope}[xshift=-2.5cm]
            \node[vertex,label={[labelsty,label distance=-2pt]90:$c$}] (c) at (0,0) {};
            \node[vertex,label={[labelsty,label distance=-2pt]-90:$b$}] (b) at (0,-1) {};
            \node[vertex,label={[labelsty,label distance=-4pt]150:$d$}] (tl) at (150:1) {};
            \node[vertex,label={[labelsty,label distance=-4pt]30:$e$}] (tr) at (30:1) {};
            \draw[edge] (c)--(b) (c)--(tl) (c)--(tr) (tr)--(tl) (tl)--(b);
            \draw[edge,colR] (b)--(tr);
        \end{scope}
        \begin{scope}[xshift=-0cm]
            \node[vertex,label={[labelsty,label distance=-2pt]90:$a$}] (c) at (0,0) {};
            \node[vertex,label={[labelsty,label distance=-2pt]-90:$b$}] (b) at (0,-1) {};
            \node[vertex,label={[labelsty,label distance=-4pt]150:$d$}] (tl) at (150:1) {};
            \node[vertex,label={[labelsty,label distance=-4pt]30:$d'$}] (tr) at (30:1) {};
            \node[vertex,label={[labelsty,label distance=-3pt]90:$e$}] (e) at (210:0.35) {};
            \node[vertex,label={[labelsty,label distance=-3pt]90:$e'$}] (es) at (-30:0.35) {};
            \draw[edge] (c)--(tl) (c)--(tr) (tl)--(b) (b)--(tr) (c)--(e) (c)--(es) (e)--(tl) (e)--(b) (es)--(tr) (es)--(b);
            \draw[edge,colR] (tr)--(tl) (e)--(es);
        \end{scope}
        \begin{scope}[xshift=2.5cm]
            \node[vertex,label={[labelsty,label distance=-2pt]90:$c'$}] (c) at (0,0) {};
            \node[vertex,label={[labelsty,label distance=-2pt]-90:$b$}] (b) at (0,-1) {};
            \node[vertex,label={[labelsty,label distance=-4pt]150:$e'$}] (tl) at (150:1) {};
            \node[vertex,label={[labelsty,label distance=-4pt]30:$d'$}] (tr) at (30:1) {};
            \draw[edge] (c)--(b) (c)--(tl) (c)--(tr) (tr)--(tl) (b)--(tr);
            \draw[edge,colR] (tl)--(b);
        \end{scope}
        \foreach \x/\l in {-1.25/bed,1.25/be'd'}
        {
            \draw[{Latex[round]}-{Latex[round]}] ($(\x-0.5,-1)$)to node[labelsty,above] {$\l$} ($(\x+0.5,-1)$);
        }
    \end{scope}
\end{tikzpicture}
\caption{Triangulation and its associated block decomposition.}\label{structtree02fig}
\end{figure}

We could extend the black graph using the edge $ee'$ instead of $ab$, see \Cref{structtree02fig}. We now get a different decomposition with only three 4-blocks. Both rigid $\mathcal{R}_3$-circuits contain both of the red edges. There must be a $\mathcal{R}_3$-dependent set in the union of these two $\mathcal{R}_3$-circuits minus any one element of the intersection, so there must be a $\mathcal{R}_3$-dependent set in the union minus a red edge. However, removing the other red edge from this $\mathcal{R}_3$-rigid graph is only 2-connected, so removal of this red edge decreases the rank, and again we identified \dbgraph\ as a $\mathcal{R}_3$-circuit.
    
\section{Apex graphs}
\label{sec:apex}

Let $G$ be an apex graph. By definition there exists a vertex $v$ such that $G-v$ is planar. We prove the following result understanding $\mathcal{R}_3$-independence at both extremes for the degree of $v$. 

\begin{proposition}\label{prop:apex}
Let $G=(V,E)$ be an apex graph with apex vertex $v\in V$ and suppose that $d_G(v)=k$. If $k\in \{2,3,4,|V|-1\}$ then $G$ is $\mathcal{R}_3$-independent if and only if it is $(3,6)$-sparse.
\end{proposition}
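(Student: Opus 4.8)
The plan is to treat the four values of $k$ separately, in each case reducing to a result already available; in every case $\mathcal R_3$-independence forces $(3,6)$-sparsity by \Cref{lem:max}, so only the converse direction needs proof and we may assume $G$ is $(3,6)$-sparse. When $k\in\{2,3\}$ this is immediate: $G-v$ is planar, hence $\mathcal R_3$-independent by \Cref{thm:gluck}, and $G$ is obtained from $G-v$ by adding the single vertex $v$ of degree at most $3$. If $d_G(v)=3$ this is a 0-extension and \Cref{lem:01ext} applies; if $d_G(v)=2$ one first adds an arbitrary extra edge from $v$ to a third vertex (possible unless $|V|\le 3$, in which case the claim is trivial) to obtain a 0-extension of $G-v$, and then uses that subgraphs of $\mathcal R_3$-independent graphs are $\mathcal R_3$-independent.

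For $k=|V|-1$ the vertex $v$ is adjacent to every other vertex, so $G$ is the cone of $G-v$, and by \Cref{lem:cone} it suffices to show that $G-v$ is $\mathcal R_2$-independent. Since $v$ is universal, for every $X\subseteq V\setminus\{v\}$ with $|X|\ge 2$ the subgraph induced by $X\cup\{v\}$ has exactly $|X|$ more edges than the subgraph of $G-v$ induced by $X$, so $(3,6)$-sparsity of $G$ gives that $X$ induces at most $\bigl(3(|X|+1)-6\bigr)-|X|=2|X|-3$ edges in $G-v$; thus $G-v$ is $(2,3)$-sparse, hence $\mathcal R_2$-independent by the two-dimensional characterisation of Pollaczek-Geiringer and Laman \cite{Geiringer,Laman}, and therefore $G$ is $\mathcal R_3$-independent.

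The substantial case is $k=4$; write $N(v)=\{a,b,c,d\}$. The strategy is to perform an admissible 1-reduction at $v$: to find a non-adjacent pair $x,y\in N(v)$ for which $H:=G-v+xy$ is still $(3,6)$-sparse. Granting this, $H$ is edge-apex (deleting $xy$ recovers the planar graph $G-v$) and $(3,6)$-sparse, hence $\mathcal R_3$-independent by \Cref{t:ear}; and $G$ is obtained from $H$ by the 1-extension that splits $xy$ by the new vertex $v$, so $G$ is $\mathcal R_3$-independent by \Cref{lem:01ext}. It thus remains to show that some non-edge of $G[N(v)]$ is not \emph{blocked}, where a non-edge $xy$ is blocked if there is a set $X$ with $x,y\in X$, $v\notin X$, and $X$ inducing exactly $3|X|-6$ edges in $G-v$. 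Four facts drive the argument: (a) $(3,6)$-sparsity of $G$ forces $G[N(v)]$ to have at most five edges, so a non-edge exists; (b) since $G-v$ is planar, such a blocking set $X$ induces a maximal planar triangulation in $G-v$, and no triangulation of $G-v$ can contain all of $a,b,c,d$, since together with $v$ this would violate $(3,6)$-sparsity of $G$; (c) a planar graph cannot contain two induced triangulations meeting in two non-adjacent vertices, because in a triangulation every face is a triangle, so non-adjacent vertices lie on no common face, and a second triangulation attached at those two vertices cannot be drawn inside a single face of the first; (d) the set function $X\mapsto 3|X|-(\text{edges of }G-v\text{ induced by }X)$ is submodular on sets of size at least three, so two blockers overlapping in at least three vertices merge into a single induced triangulation of $G-v$.

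I expect the main obstacle to be organising the finite case analysis, on the isomorphism type of $G[N(v)]$ (equivalently on whether $G[N(v)]$ has $5$, $4$, $3$ or $2$ edges), that derives a contradiction from the assumption that every non-edge of $G[N(v)]$ is blocked. Facts (c) and (d) dispose of blockers that overlap in two non-adjacent vertices or in at least three vertices — merging the latter repeatedly eventually produces a triangulation meeting $N(v)$ in all four vertices, contradicting (b) — so the delicate situations are two blocking triangulations meeting in a single vertex or in a single edge. There one must use a further consequence of planarity, namely that a neighbour of $v$ lying outside a blocking triangulation $T$ but adjacent to two non-adjacent vertices of $T$ already destroys planarity of $G-v$, together with the edges inside $N(v)$ whose presence $(3,6)$-sparsity forces in the relevant types, to exhibit a non-planar subgraph of $G-v$ and reach the contradiction.
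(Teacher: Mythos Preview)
Your treatment of $k\in\{2,3,|V|-1\}$ matches the paper's proof and is correct. For $k=4$ you have identified exactly the paper's strategy --- perform a 1-reduction at $v$, land in an edge-apex $(3,6)$-sparse graph, and invoke \Cref{t:ear} and \Cref{lem:01ext} --- and your structural facts (a)--(d) together with the face observation (a vertex outside a triangulation cannot be adjacent to two non-adjacent vertices of it) are precisely the tools the paper uses.

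The gap is in how the $k=4$ case analysis is organised and closed. You propose splitting on the isomorphism type of $G[N(v)]$ and finishing ad hoc in each type; the paper instead fixes one non-edge $xy$ and works with the \emph{maximal} critical set $X_{xy}\subset V\setminus\{v\}$ containing $x,y$ (this is \Cref{lem:1-reduce}). Maximality is the device you are missing: once two blockers meet in three or more vertices, their union is critical and strictly larger, giving an \emph{immediate} contradiction to the maximality of $X_{xy}$, rather than the start of an iterated merging whose termination you would still need to justify. The paper then locates the remaining neighbours of $v$ in faces of the triangulation $G[X_{xy}]$, produces a second non-edge (say $xw$) via your face observation, and if that too is blocked, uses 3-connectedness of $G[X_{xw}]$ to force $|X_{xy}\cap X_{xw}|\ge 3$, again contradicting maximality. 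The residual ``meeting in a single edge'' situation is handled by an explicit edge count on a triple union $X_{xy}\cup X_{wz}\cup X_{xw}$, not by exhibiting a Kuratowski subgraph of $G-v$ as your last paragraph suggests.

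Two small corrections: your case list of $5,4,3,2$ edges in $G[N(v)]$ omits $0$ and $1$, and the phrase ``edges inside $N(v)$ whose presence $(3,6)$-sparsity forces'' is backwards --- sparsity only bounds edges from above. Neither issue matters once you adopt the maximal-blocker viewpoint, which is uniform in the type of $G[N(v)]$.
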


The proposition is close to being best possible in a natural sense since \dbgraph\ has apex vertex $v$ of degree $6=|V|-2$ and one can apply topological 1-extensions to increase the degree of $v$ (in an apex graph that is $(3,6)$-sparse but not $\mathcal{R}_3$-independent). However, we leave open the case when the apex vertex has degree 5; we conjecture the proposition holds in this case too.

To prove the proposition we need a basic lemma. For a graph $G=(V,E)$ and a subset $X\subseteq V$, let $i_G(X)$ denote the number of edges in the subgraph of $G$ induced by $X$. In a $(3,6)$-sparse graph $G$, a set $X\subseteq V$ is \emph{critical} if $i_G(X)=3|X|-6$. Let $d(X,Y)$ denote the number of edges of the form $xy$ with $x\in X\setminus Y$ and $y\in Y\setminus X$. 
For two critical sets $X,Y\subset V$ we will implicitly use the following basic equality repeatedly:
\begin{equation*}
    i(X\cup Y)+i(X\cap Y)= i(X)+i(Y)+d(X,Y).
\end{equation*}

\begin{lemma}[{\cite[Lemma 3.1]{JJ05}}]\label{lem:1-reduce}
Let $G=(V,E)$ be $(3,6)$-sparse, let $v\in V$ be a vertex of degree 4 and suppose there exists $x,y\in N(v)$ such that $xy\notin E$. Then $G-v+xy$ is not $(3,6$)-sparse if and only if there exists a critical set $X$ such that $x,y\in X\subset V-v$.
\end{lemma}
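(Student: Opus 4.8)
The plan is to prove both implications by a direct count of induced edges, exploiting that $G$ and $G-v+xy$ differ in a completely controlled way. Since $d_G(v)=4$ and $x,y\in N(v)$ with $xy\notin E$, the graph $G-v+xy$ is obtained from $G$ by deleting $v$ together with its four incident edges and then inserting the single edge $xy$. The first thing I would record is the resulting edge-count identity: for every $Y\subseteq V-v$ we have $i_{G-v+xy}(Y)=i_G(Y)+1$ if $\{x,y\}\subseteq Y$, and $i_{G-v+xy}(Y)=i_G(Y)$ otherwise, because no edge incident to $v$ is internal to a set of vertices avoiding $v$ and the only edge added is $xy$.

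For the backward implication, suppose $X$ is a critical set with $x,y\in X\subseteq V-v$, where as usual ``critical'' is taken to refer to sets of at least three vertices (since $xy\notin E$, such an $X$ automatically has at least four vertices: a critical set on three vertices is a triangle). Then the $(3,6)$-sparsity bound applies to $X$, and the edge-count identity gives $i_{G-v+xy}(X)=i_G(X)+1=(3|X|-6)+1=3|X|-5>3|X|-6$, so $X$ already certifies that $G-v+xy$ is not $(3,6)$-sparse.

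For the forward implication, suppose $G-v+xy$ is not $(3,6)$-sparse, and fix a witnessing set $Y\subseteq V-v$ with $|Y|\ge 3$ and $i_{G-v+xy}(Y)\ge 3|Y|-5$. I would first argue $\{x,y\}\subseteq Y$: if not, the edge-count identity gives $i_{G-v+xy}(Y)=i_G(Y)\le 3|Y|-6$ by $(3,6)$-sparsity of $G$, contradicting the choice of $Y$. Given $\{x,y\}\subseteq Y$, the identity yields $i_G(Y)=i_{G-v+xy}(Y)-1\ge 3|Y|-6$, and combining this with the bound $i_G(Y)\le 3|Y|-6$ from sparsity of $G$ forces $i_G(Y)=3|Y|-6$. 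Thus $Y$ is a critical set with $x,y\in Y\subseteq V-v$, as required.

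The argument contains no real obstacle; the only things to be careful about are the edge-count bookkeeping and the degenerate small sets (in particular the convention that critical sets have at least three, and hence here at least four, vertices). It is worth noting that, in contrast with the other $(3,6)$-sparsity arguments in this section, this lemma does not use the submodular identity $i(X\cup Y)+i(X\cap Y)=i(X)+i(Y)+d(X,Y)$: each direction is just a count on a single vertex set.
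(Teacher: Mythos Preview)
Your argument is correct: both directions reduce to the single edge-count identity you record, and the only subtlety (that the witnessing set $Y$ must contain both $x$ and $y$, hence must be the required critical set) is handled cleanly. Note that the paper does not actually prove this lemma at all; it is quoted from \cite[Lemma~3.1]{JJ05} and stated without proof, so there is no in-paper argument to compare against. Your direct counting proof is the standard one and would be entirely appropriate here.
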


\begin{proof}[Proof of \Cref{prop:apex}]
Let $k$ be the degree of $v$.
Suppose $k=|V|-1$. Since $G$ is $(3,6)$-sparse a short counting argument implies that $G-v$ is $(2,3)$-sparse and hence $G-v$ is $\mathcal{R}_2$-independent \cite{Geiringer,Laman}. Thus $G$ is $\mathcal{R}_3$-independent by \Cref{lem:cone}.
Suppose next that $k\in \{2,3\}$. Then $G-v$ is planar and hence $\mathcal{R}_3$-independent by \Cref{thm:gluck}. Therefore $G$ is $\mathcal{R}_3$-independent by \Cref{lem:01ext}. 

Next suppose $k=4$. For each $\{i,j\}\subset N(v)$ we define $X_{ij}$ to be the largest critical set in $V-v$ containing $i,j$ (if such a set exists).
Since $G$ is $(3,6)$-sparse, $N(v)$ does not induce a subgraph of $G$ isomorphic to $K_4$. Hence, there is $x,y\in N(v)$ such that $xy\notin E$. 
Consider $G-v+xy$. \Cref{lem:1-reduce} implies that $X_{xy}$ exists. 
Let $N(v)=\{x,y,z,w\}$.  $(3,6)$-sparsity implies that $N(v)\not\subset X_{xy}$ so without loss of generality we may suppose $w\notin X_{xy}$. 

Suppose first that $z\notin X_{xy}$. Since $xy\notin E$,
it follows from $G[X_{xy}]$ that $x,y$ are not on the boundary of the same facial triangle in $G[X_{xy}]$. 
Since $G-v$ is planar,
the vertices $w,z$ are contained in two faces $T_w,T_z$ (not necessarily distinct) of $G[X_{xy}]$ respectively.
Hence, without loss of generality,
$x$ is not on the boundary of $T_w$,
and so $xw \notin E$. 
If $G-v+xw$ is not $(3,6)$-sparse then \Cref{lem:1-reduce} implies that $X_{xw}$ exists. 
Since $x\in X_{xy}\cap X_{xw}$, $x$ is not on the boundary of $T_w$ and $G[X_{xw}]$ is 3-connected,
it follows that $X_{xw}$ contains the boundary of $T_w$.
Hence, $|X_{xy}\cap X_{xw}|\geq 3$ and $X_{xy}\cup X_{xw}$ is critical. However,
this contradicts the maximality of $X_{xy}$.

Suppose then that for every pair $\{i,j\} \subset N(v)$ where $X_{ij}$ exists,
then $X_{ij} \cap (N(v) \setminus \{i,j\}) = \emptyset$.
Since $X_{xy}$ is maximal,
then one of the edges $wx,wy,wz$ is not in $E$.
Suppose first that $wz\notin E$. If $G-v+wz$ is not $(3,6)$-sparse then \Cref{lem:1-reduce} implies that $X_{wz}$ exists and, as above, we may assume $x\notin X_{wz}$ and $y \in X_{wz}$. If $|X_{xy}\cap X_{wz}|\geq 3$ then $X_{xy}\cup X_{wz}$ is critical, contradicting the maximality of $X_{xy}$.
Hence, $|X_{xy}\cap X_{wz}| =2$.  Then $i(X_{xy}\cap X_{wz})=1$,
as otherwise $X_{xy} \cup X_{wz}$ contradicts the maximality of $X_{xy}$.
Since $X_{xy} \cup X_{wz}$ cannot be critical,
it follows from a simple sparsity counting argument that $i(X_{xy}\cup X_{wz})=3|X_{xy}\cup X_{wz}|-7$ and no edges can exist connecting vertices in $X_{xy} \setminus X_{wz}$ and $X_{wz} \setminus X_{xy}$.
Hence, $xw\notin E$. 
If $G-v+xw$ is not $(3,6)$-sparse then \Cref{lem:1-reduce} implies that $X_{xw}$ exists. Note that $|(X_{xy}\cup X_{wz})\cap X_{xw}|=3$ (since otherwise $X_{xw}$ is not maximal). Since $xw\notin E$ we have $i((X_{xy}\cup X_{wz})\cap X_{xw})\leq 2$. It follows that 
\begin{align*} i(X_{xy}\cup X_{wz}\cup X_{xw}) &\geq 3|X_{xy}\cup X_{wz}|-7+3|X_{xw}|-6+d(X_{xy}\cup X_{wz},X_{xw})-2\\
&= 3|X_{xy}\cup X_{wz}\cup X_{xw}|-6+d(X_{xy}\cup X_{wz},X_{xw}).
\end{align*}
Since $G$ is $(3,6)$-sparse, $d(X_{xy}\cup X_{wz},X_{xw})=0$ and we contradict the maximality of $X_{xy}$.
This concludes the case where $wz \notin E$.
The same proof applies for the cases where $wx \notin E$ and where $wy \notin E$.

Hence, there exists $a,b\in N(v)$ such that $G-v+ab$ is $(3,6)$-sparse.
Note also that either $G-v+ab$ is planar or it is edge-apex. Since $G-v+ab$ is $(3,6)$-sparse, it follows, from \Cref{thm:gluck} and \Cref{t:ear} respectively in these two cases, that $G-v+ab$ is $\mathcal{R}_3$-independent. Hence, \Cref{lem:01ext} implies that $G$ is $\mathcal{R}_3$-independent. 
\end{proof}

We also note that in \cite{GGJN}  $\mathcal{R}_3$-independence was understood for all graphs on at most 9 vertices. So the first open case for apex graphs is on 10 vertices. \Cref{tab:in-dep} illustrates the number of apex graphs their $(3,6)$-sparsity and their $\mathcal{R}_3$-independence for small $|V|$. 

\begin{table}[ht]
    \centering
    \begin{tabular}{rrrrrrr}
         \toprule
         $|V|$ & apex & $\mathcal{R}_3$-independent & $\mathcal{R}_3$-dependent \\\midrule
         6  & 12 & 7 & 5 &\\
         7  & 190 & 133 & 57 \\
         8  & 4482 & 3511 & 971 \\
         9  & 142142 & 115985 & 26157 \\
         10 & 5517578 & 4554816 & 962762 \\
         \bottomrule
    \end{tabular}
    \begin{tabular}{rrrrrrr}
         \toprule
         $|V|$ & $(3,6)$-sparse apex & $\mathcal{R}_3$-independent & $\mathcal{R}_3$-dependent \\\midrule
         6  & 7 & 7 & 0\\
         7  & 133 & 133 & 0 \\
         8  & 3512 & 3511 & 1 \\
         9  & 115999 & 115985 & 14 \\
         10 & 4555219 & 4554816 & 403 \\
         \bottomrule
    \end{tabular}
    \caption{Number of non-planar connected apex graphs with different independence properties \cite{DataApexIndep}.}
    \label{tab:in-dep}
\end{table}

In general we give a negative result, which combined with the existence of 3-connected apex graphs that are flexible $\mathcal{R}_3$-circuits (see \Cref{fig:3connR3circ} for an example), means we do not investigate further the rigidity of such graphs in the present article.

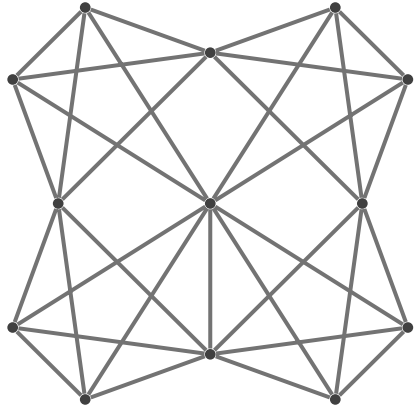
\begin{figure}[ht]
    \centering
    \begin{tikzpicture}
        \node[vertex] (o) at (0,0) {};
        \foreach \w in {0,1,2,3}
        {
            \node[vertex] (a\w) at (90*\w:2) {};
            \node[vertex] (b\w) at ($(a\w)+(90*\w+70:1.75)$) {};
            \node[vertex] (c\w) at ($(90*\w+90:2)+(90*\w+90-70:1.75)$) {};
            \draw[edge] (a\w)--(b\w) (a\w)--(c\w) (b\w)--(c\w) (b\w)--(o) (c\w)--(o);
        }
        \foreach \w [remember=\w as \wo (initially 0)] in {3,2,1,0}
        {
            \draw[edge] (a\wo)--(b\w) (a\wo)--(c\w) (a\wo)--(a\w);
        }
        \draw[edge] (a3)--(o);
    \end{tikzpicture}
    \caption{A three connected $\mathcal{R}_3$-circuit obtained from a ring of 4 copies of $K_4$ by adding a vertex of degree 9.}
    \label{fig:3connR3circ}
\end{figure}

\begin{proposition}\label{prop:apex2}
    For every nonnegative integer $g$,
    there exists a minimally $\mathcal{R}_3$--rigid apex graph with Euler genus $g$.
\end{proposition}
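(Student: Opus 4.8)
The plan is to treat $g=0$ separately and, for $g\ge 1$, to build the required graph from $g$ copies of a single genus-$1$ gadget glued along a common edge, then add a path of $g-1$ ``patching'' edges to recover $(3,6)$-tightness, arranging everything so that the Euler genus stays exactly $g$. For $g=0$ any planar triangulation works: it is minimally $\mathcal{R}_3$-rigid by \Cref{thm:gluck} and has Euler genus $0$ (and is trivially apex).

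For the gadget take $\Gamma=K_{1,1,1,3}$, with ``triangle part'' $\{a_1,a_2,a_3\}$ and ``independent part'' $\{b_1,b_2,b_3\}$. Since $\Gamma$ is the double cone over the tree $K_{1,3}$, applying \Cref{lem:cone} twice (starting from the fact that the forest $K_{1,3}$ is $\mathcal{R}_1$-independent) shows $\Gamma$ is $\mathcal{R}_3$-independent; as $|E(\Gamma)|=12=3\cdot 6-6$, $\Gamma$ is minimally $\mathcal{R}_3$-rigid. From $K_{3,3}\subseteq\Gamma\subseteq K_6$ we get $eg(\Gamma)=1$, and $\Gamma-a_1=K_{1,1,3}$ is planar. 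The property we shall need for the genus bookkeeping is that $\Gamma$ has a cellular embedding in the projective plane in which the triangle $a_1a_2a_3$ bounds a face; this is checked directly (for instance, the subgraph $K_{3,3}=\Gamma-\{a_1a_2,a_2a_3,a_3a_1\}$ has a projective embedding with a hexagonal face, into which the triangle $a_1a_2a_3$ may be inscribed).

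Now fix $g\ge 1$. Take disjoint copies $\Gamma_1,\dots,\Gamma_g$ of $\Gamma$ and identify the $g$ copies of the edge $a_1a_2$ into a single edge; call the result $H_g$, and write $a_3^{(i)}$ for the copy of $a_3$ in $\Gamma_i$. Decomposing $H_g=(\Gamma_1\cup\dots\cup\Gamma_{g-1})\cup\Gamma_g$ along the common edge and iterating \Cref{addgenus} gives $eg(H_g)=\sum_i eg(\Gamma_i)=g$; iterating \Cref{lem:intbridge} (the statement that a union of $\mathcal{R}_3$-independent graphs meeting in an $\mathcal{R}_3$-rigid subgraph — here the common edge, inducing a $K_2$ — is $\mathcal{R}_3$-independent) gives that $H_g$ is $\mathcal{R}_3$-independent. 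A count gives $|V(H_g)|=4g+2$ and $|E(H_g)|=11g+1=(3|V(H_g)|-6)-(g-1)$. Let $\widehat H_g$ be obtained from $H_g$ by adding the $g-1$ edges $a_3^{(1)}a_3^{(2)},\dots,a_3^{(g-1)}a_3^{(g)}$. Then $|E(\widehat H_g)|=3|V(\widehat H_g)|-6$, and adding these edges one at a time while applying \Cref{lem:intbridge} (the rank of $G+uv$ exceeds that of $G$ when $\{u,v\}$ straddles a vertex-cut of size $\le 2$ — here the cut $\{a_1,a_2\}$ of the graph built so far) shows $\widehat H_g$ stays $\mathcal{R}_3$-independent; hence $\widehat H_g$ is minimally $\mathcal{R}_3$-rigid.

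It remains to check that $\widehat H_g$ is apex with Euler genus exactly $g$. Since $H_g\subseteq\widehat H_g$ we have $eg(\widehat H_g)\ge eg(H_g)=g$; for the reverse inequality, amalgamate the projective embeddings of the $\Gamma_i$ (from the second paragraph) along $a_1a_2$, at each step choosing the amalgamation so that the triangular face $a_1a_2a_3^{(i)}$ of $\Gamma_i$ is merged with the running face carrying $a_3^{(1)},\dots,a_3^{(i-1)}$ (which stays incident to $a_1a_2$); this produces an embedding of $H_g$ in a surface of Euler genus $g$ with all of $a_3^{(1)},\dots,a_3^{(g)}$ on one face, into which the patching path can be drawn without crossings, so $eg(\widehat H_g)\le g$. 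For apexness, delete $a_1$: each $\Gamma_i-a_1=K_{1,1,3}$ is planar, the subgraphs $\Gamma_i-a_1$ meet pairwise only in $a_2$, and a routine planarity check (draw the $\Gamma_i-a_1$ as petals attached at $a_2$ with each $a_3^{(i)}$ on the outer face) shows the patching path $a_3^{(1)}\cdots a_3^{(g)}$ can be added while preserving planarity; hence $\widehat H_g-a_1$ is planar and $\widehat H_g$ is apex. The step demanding the most care is the equality $eg(\widehat H_g)=g$, and specifically arranging that all the vertices $a_3^{(i)}$ lie on a common face of a genus-$g$ embedding of $H_g$ — this is exactly what the hexagonal face of a projective embedding of $K_{3,3}$, together with the face-merging underlying \Cref{addgenus}, provide.
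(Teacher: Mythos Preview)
Your construction follows the same template as the paper's proof: pick a $6$-vertex minimally $\mathcal R_3$-rigid apex graph of Euler genus~$1$, glue $g$ copies along a common edge, and add $g-1$ ``patching'' edges, invoking \Cref{addgenus} for the genus and \Cref{lem:intbridge} for the rigidity. Your gadget $\Gamma=K_{1,1,1,3}$ differs from the paper's $G_0=(K_5-xy)+w$, but it is a legitimate choice (the double-cone argument for independence and the $K_{3,3}\subseteq\Gamma\subseteq K_6$ bound on $eg(\Gamma)$ are both correct), and your treatment of rigidity, of apexness, and of the lower bound $eg(\widehat H_g)\ge g$ is fine.

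The genuine gap is in the upper bound $eg(\widehat H_g)\le g$. Your inductive claim is that, after amalgamating $\Gamma_i$ along $a_1a_2$, the ``running face'' carrying $a_3^{(1)},\dots,a_3^{(i)}$ \emph{stays incident to $a_1a_2$}; but this is exactly what edge-amalgamation destroys. If $a_1a_2$ separates faces $F$ and $F'$ in one surface and $T_i,E_i$ in the other, merging $F$ with $T_i$ produces a face whose boundary is $(\partial F\setminus a_1a_2)\cup(\partial T_i\setminus a_1a_2)$, and the edge $a_1a_2$ now lies between the \emph{other} two faces $F'$ and $E_i$. (Check the toy case of two triangles glued on an edge: the merged face is the outer $4$-gon, and $a_1a_2$ sits between the two inner triangles.) So after the very first merge the hypothesis fails, and already for $g=3$ one cannot continue: in the embedding you build of $H_2$ the edge $a_1a_2$ lies between the two ear faces $E_1,E_2$, neither of which sees $a_3^{(1)}$ or $a_3^{(2)}$, so there is no way to merge in $\Gamma_3$ and land $a_3^{(3)}$ in the same face as $a_3^{(1)},a_3^{(2)}$.

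The paper sidesteps this by doing the recursion on the \emph{augmented} graphs: it forms $G_{i}^+$ from $G_{i-1}^+$ by first gluing one fresh copy of $G_0$ along $uv$ (so \Cref{addgenus} applies to the pair $(G_{i-1}^+,G_0)$, which still meet in exactly the edge $uv$) and then inserting a \emph{single} patching edge whose endpoints are chosen to lie in the just-merged face. You can salvage your argument the same way, but you then need the inductive invariant ``$\widehat H_{i}$ has a genus-$i$ embedding with a face incident to $a_1a_2$ that contains $a_3^{(i)}$''; with your particular embedding of $\Gamma$ (triangle face $a_1a_2a_3$ on one side, ear $a_1b_?a_2$ on the other) this invariant also fails at $i=2$, so either a different projective embedding of $\Gamma$ or a different choice of patching endpoints is needed. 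As written, the step you yourself flag as ``demanding the most care'' is not yet correct.
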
 

\begin{proof}
Take the complete graph $K_5$, delete an edge $xy$. This is minimally $\mathcal{R}_3$-rigid of Euler genus 0. Next add a new vertex $w$ adjacent to exactly 3 vertices of the $K_5$ including both $x$ and $y$ to form a graph $G$. It is trivial to check that $G$ is minimally $\mathcal{R}_3$-rigid and the Euler genus of $G$ is 1.

Put $G=G_0$ and distinguish an edge $e=uv$. Define $G_i$, for $1\leq i \leq k$ recursively so that $G_{i+1}$ is formed from $G_i$ by gluing a copy of $G_0$ onto $G_i$ along the edge $e=uv$. (Since $G_0$ is 3-connected, in this manner we create a graph $G_k$ so that $\{u,v\}$ is a 2-vertex separating set and $G_k-\{u,v\}$ has exactly $k+1$ components each isomorphic to $G_0-\{u,v\}$.)

Since every vertex of $G_0$ is an apex, $G_0-u$ is planar and hence $G_k-u$ is planar (as the union of $k+1$ planar graphs glued together at the vertex $v$. Since each copy of $G_0$ has Euler genus 1, \Cref{addgenus} implies that $G_k$ has Euler genus $k$. 

It remains to augment $G_k$ into a minimally 
$\mathcal{R}_3$-rigid graph. We do this recursively. Since $G_0$ is minimally $\mathcal{R}_3$-rigid, and $G_1$ is obtained by gluing two copies of $G_0$ along an edge, $G_1$ is $\mathcal{R}_3$-independent by \Cref{lem:intbridge}\ref{it:intbridge:indep}  but has precisely one infinitesimal motion. We add one edge between the two copies of $G_0$ carefully (using a vertex on the outer face in each copy of $G_0-u$) and apply \Cref{lem:intbridge}\ref{it:intbridge:rank} to see that the resulting graph, which we denote $G_1^+$, is minimally $\mathcal{R}_3$-rigid. We continue in precisely this manner gluing $G_0$ onto $G_1^+$ across $uv$ and then adding an edge to form $G_2^+$, repeating until we obtain $G_k^+$. By construction each $G_i^+$ is a minimally $\mathcal{R}_3$-rigid graph with apex $u$ and Euler genus $i$.
\end{proof}

\begin{figure}[htb]
\centering
\begin{tikzpicture}[yscale=1.1,baseline={(0,0)}]
    \node[vertex,label={[labelsty]180:$a$}] (a) at (0,0) {};
    \node[vertex,label={[labelsty]180:$b$}] (b) at (60:0.4) {};
    \node[vertex,label={[labelsty]180:$c$}] (c) at (60:0.8) {};
    \node[vertex,label={[labelsty]180:$d$}] (d) at (60:1.2) {};
    \node[vertex,label={[labelsty]180:$e$}] (e) at (60:1.6) {};
    \node[vertex,label={[labelsty]0:$a$}] (a2) at ($(e)+(2,0)$) {};
    \node[vertex,label={[labelsty]0:$b$}] (b2) at ($(a2)+(-60:0.4)$) {};
    \node[vertex,label={[labelsty]0:$c$}] (c2) at ($(a2)+(-60:0.8)$) {};
    \node[vertex,label={[labelsty]0:$d$}] (d2) at ($(a2)+(-60:1.2)$) {};
    \node[vertex,label={[labelsty]0:$e$}] (e2) at ($(a2)+(-60:1.6)$) {};
    \node[vertex,label={[labelsty,label distance=-2pt]-110:$w$}] (w) at ($(e)!0.5!(a2)+(0,-0.8)$) {};
    \draw[edge] (a)--(b) (b)--(c) (c)--(d) (d)--(e);
    \draw[edge] (a2)--(b2) (b2)--(c2) (c2)--(d2) (d2)--(e2);
    \draw[edge] (a)--(e2) (a)--(d2) (e)--(w) (e)--(a2) (w)--(b2) (w)--(d2);
    \draw[edge] (a)to[bend right=40] (c);
    \draw[edge] (c)to[bend right=40] (e);
    \draw[edge] (b2)to[bend right=40] (d2);
\end{tikzpicture}
\quad
\begin{tikzpicture}[yscale=1.1,baseline={(0,0)}]
    \node[vertex,label={[labelsty]180:$a$}] (a) at (0,0) {};
    \node[vertex,label={[labelsty]180:$b'$}] (b) at (60:0.4) {};
    \node[vertex,label={[labelsty]180:$c'$}] (c) at (60:0.8) {};
    \node[vertex,label={[labelsty]180:$d'$}] (d) at (60:1.2) {};
    \node[vertex,label={[labelsty]180:$e$}] (e) at (60:1.6) {};
    \node[vertex,label={[labelsty]0:$a$}] (a2) at ($(e)+(2,0)$) {};
    \node[vertex,label={[labelsty]0:$b$}] (b2) at ($(a2)+(-60:0.4)$) {};
    \node[vertex,label={[labelsty]0:$c$}] (c2) at ($(a2)+(-60:0.8)$) {};
    \node[vertex,label={[labelsty]0:$d$}] (d2) at ($(a2)+(-60:1.2)$) {};
    \node[vertex,label={[labelsty]0:$e$}] (e2) at ($(a2)+(-60:1.6)$) {};
    \node[vertex,label={[labelsty,label distance=-2pt]-110:$w$}] (w) at ($(e)!0.5!(a2)+(0,-0.8)$) {};
    \node[vertex,label={[labelsty]180:$b$}] (br) at (-60:0.4) {};
    \node[vertex,label={[labelsty]180:$c$}] (cr) at (-60:0.8) {};
    \node[vertex,label={[labelsty]180:$d$}] (dr) at (-60:1.2) {};
    \node[vertex,label={[labelsty]180:$e$}] (er) at (-60:1.6) {};
    \node[vertex,label={[labelsty]0:$a$}] (ar2) at ($(er)+(2,0)$) {};
    \node[vertex,label={[labelsty]0:$b'$}] (br2) at ($(ar2)+(60:0.4)$) {};
    \node[vertex,label={[labelsty]0:$c'$}] (cr2) at ($(ar2)+(60:0.8)$) {};
    \node[vertex,label={[labelsty]0:$d'$}] (dr2) at ($(ar2)+(60:1.2)$) {};
    \node[vertex,label={[labelsty,label distance=-2pt]110:$w'$}] (wr) at ($(er)!0.5!(ar2)+(0,0.8)$) {};
    
    \draw[edge] (a)--(b) (b)--(c) (c)--(d) (d)--(e);
    \draw[edge] (a2)--(b2) (b2)--(c2) (c2)--(d2) (d2)--(e2);
    \draw[edge] (a)--(d2) (e)--(w) (e)--(a2) (w)--(b2) (w)--(d2);
    \draw[edge] (a)to[bend right=40] (c);
    \draw[edge] (c)to[bend right=40] (e);
    \draw[edge] (b2)to[bend right=40] (d2);
    \draw[edge] (a)--(br) (br)--(cr) (cr)--(dr) (dr)--(er);
    \draw[edge] (ar2)--(br2) (br2)--(cr2) (cr2)--(dr2) (dr2)--(e2);
    \draw[edge] (a)--(dr2) (er)--(wr) (er)--(ar2) (wr)--(br2) (wr)--(dr2);
    \draw[edge] (a)to[bend left=40] (cr);
    \draw[edge] (cr)to[bend left=40] (er);
    \draw[edge] (br2)to[bend left=40] (dr2);
\end{tikzpicture}
\quad
\begin{tikzpicture}[yscale=1.1,baseline={(0,0)}]
    \coordinate (e) at (60:1.6) {};
    \coordinate (a2) at ($(e)+(2,0)$) {};
    \node[vertex,label={[labelsty]0:$b$}] (b2) at ($(a2)+(-60:0.4)$) {};
    \node[vertex,label={[labelsty]0:$c$}] (c2) at ($(a2)+(-60:0.8)$) {};
    \node[vertex,label={[labelsty]0:$d$}] (d2) at ($(a2)+(-60:1.2)$) {};
    \node[vertex,label={[labelsty]0:$e$}] (e2) at ($(a2)+(-60:1.6)$) {};
    \node[vertex,label={[labelsty,label distance=-2pt]-110:$w$}] (w) at ($(e)!0.5!(a2)+(0,-0.8)$) {};
    \coordinate (er) at (-60:1.6) {};
    \coordinate (ar2) at ($(er)+(2,0)$) {};
    \node[vertex,label={[labelsty]0:$b'$}] (br2) at ($(ar2)+(60:0.4)$) {};
    \node[vertex,label={[labelsty]0:$c'$}] (cr2) at ($(ar2)+(60:0.8)$) {};
    \node[vertex,label={[labelsty]0:$d'$}] (dr2) at ($(ar2)+(60:1.2)$) {};
    \node[vertex,label={[labelsty,label distance=-2pt]110:$w'$}] (wr) at ($(er)!0.5!(ar2)+(0,0.8)$) {};

    \draw[edge] (b2)--(c2) (c2)--(d2) (d2)--(e2);
    \draw[edge] (w)--(b2) (w)--(d2);
    \draw[edge] (b2)to[bend right=40] (d2);
    \draw[edge] (c2)to[bend left=40] (e2);
    \draw[edge] (br2)--(cr2) (cr2)--(dr2) (dr2)--(e2);
    \draw[edge] (wr)--(br2) (wr)--(dr2);
    \draw[edge] (br2)to[bend left=40] (dr2);
    \draw[edge] (cr2)to[bend right=40] (e2);
    \draw[edge] (w)--(e2) (wr)--(e2);
\end{tikzpicture}

\caption{$G_0$, left, on the projective plane;  $G_1 =G_0 \overline{(a,e)} G_0$, middle;  $G_1\setminus a$, right, in the plane. \label{oriented13fig}}
\end{figure}
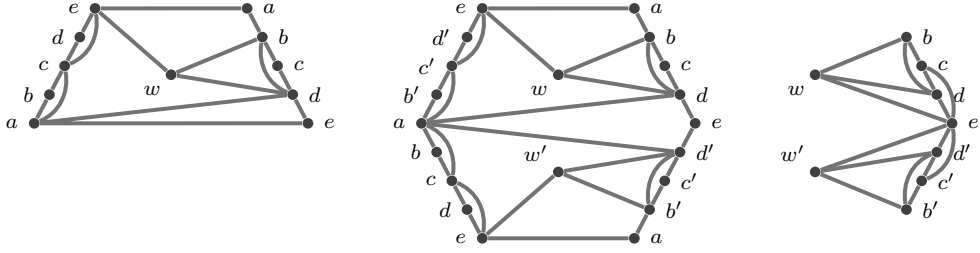

\Cref{oriented13fig} shows how to start with $G_0$
to get $G_1$ as the edge join of two $G_0$s. $G_1$ becomes planar with the apex vertex removed, and it has a cut-vertex which is the other endpoint of the edge on which the edge join was performed. It should be clear from the figure that either two $G_1$s can be "stacked" to remain orientable, in which case the Euler genus goes up by~2,  or another $G_0$ can be glued onto $G_1$, in which case the Euler genus increases by~1.

\section{Critical graphs}
\label{sec:critical}

We next analyse the rigidity of critically edge-apex graphs and critically apex graphs.

\subsection{Critically edge-apex graphs}

\begin{example}\label{ex:db}
The flexible $\mathcal{R_3}$-circuit \dbgraph\ (see \Cref{fig:doublebanana}) is non-planar. It is an apex graph and while not edge-apex it is 2-edge-apex. Moreover it is critically 8-edge apex,  since we may remove 7 edges from one of the bananas which leaves a path from top to bottom; this is still non-planar (and if we remove any 8 edges we obtain a planar graph). Likewise, it is critically 3-apex, since it is possible to remove two vertices from one banana and what is left is still non-planar (but removing any 3 vertices results in a planar graph).
\end{example}

The following basic corollary to Kuratowski's theorem~\cite{Kuratowski1930} characterises  non-planar critically edge-apex graphs.

\begin{corollary}\label{cor:kuratowksi}
    Let $G$ be a non-planar critically edge-apex graph.
    Then $G$ is a subdivision of either $K_5$ or $K_{3,3}$.
\end{corollary}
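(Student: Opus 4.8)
The plan is to use Kuratowski's theorem to control the structure of a non-planar graph after deleting a single edge, and then exploit the ``critically edge-apex'' hypothesis to strengthen this to a global structural statement.

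First I would fix a non-planar critically edge-apex graph $G$. By Kuratowski's theorem, $G$ contains a subgraph $H$ that is a subdivision of $K_5$ or $K_{3,3}$. The goal is to show $G = H$, i.e.\ that $G$ has no extra edges and no extra vertices beyond those forced by the subdivision. The first key step is to observe that $H$ is itself non-planar, hence not edge-apex in the trivial sense — but more to the point, $H$ has the property that deleting \emph{any} one of its edges leaves a planar graph. Indeed, a subdivision of $K_5$ or $K_{3,3}$ becomes planar after the removal of any single edge: removing an edge on a subdivided path through a branch vertex of $K_5$ or $K_{3,3}$ reduces the underlying Kuratowski graph to $K_5 - e$ or $K_{3,3} - e$, both of which are planar, and a subdivision of a planar graph is planar. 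So every edge of $H$ is an ``apex edge'' for $H$.

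The main step is then to show that $G$ can have no vertex or edge outside $H$. Suppose $e \in E(G) \setminus E(H)$. Since $G$ is critically edge-apex, $G - e$ is planar; but $G - e \supseteq H$, and $H$ is non-planar, a contradiction. Hence $E(G) = E(H)$, so in particular every edge of $G$ lies on the Kuratowski subdivision. It remains to rule out extra vertices: if $w \in V(G) \setminus V(H)$, then since $E(G) = E(H) \subseteq E(H)$ and $H$ uses no edges at $w$, the vertex $w$ is isolated in $G$; but $G$ is (implicitly, being a connected graph throughout the paper, and certainly non-planar) connected, so there are no isolated vertices and $V(G) = V(H)$. Therefore $G = H$ is exactly a subdivision of $K_5$ or $K_{3,3}$.

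The step I expect to need the most care is the claim that \emph{every} edge of a $K_5$- or $K_{3,3}$-subdivision is an apex edge, because one must handle edges lying on a subdivided path correctly: deleting such an edge disconnects that path into two pendant paths, and one should check that contracting the pendant paths back recovers $K_5 - e$ or $K_{3,3} - e$ up to homeomorphism, both planar. This is routine but is the place where a sloppy argument could go wrong. A secondary subtlety is the direction of the Kuratowski containment: we only need that $G$ \emph{contains} a Kuratowski subdivision (true since $G$ is non-planar), and then the critically edge-apex hypothesis forces that containment to be an equality; we do not need the converse, which would anyway require checking that no $K_5$- or $K_{3,3}$-subdivision is itself planar — immediate, since both are non-planar and non-planarity is preserved under subdivision.
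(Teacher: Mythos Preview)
Your argument is correct and is exactly the natural one; the paper gives no proof at all, simply citing it as a basic corollary of Kuratowski's theorem. One remark: the paragraph establishing that every edge of a $K_5$- or $K_{3,3}$-subdivision is itself an apex edge is unnecessary for the direction actually being proved --- you only need that $H$ is non-planar, so that any edge $e\in E(G)\setminus E(H)$ yields the non-planar graph $G-e\supseteq H$, contradicting the hypothesis. That extra fact would be what you need for the (unstated) converse.
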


\begin{lemma}\label{lem:critedgeind}
    There is no non-planar minimally $\mathcal{R}_3$-rigid critically edge-apex graph. On the other hand, except $K_5$, every critically edge-apex graph is $\mathcal{R}_3$-independent.
\end{lemma}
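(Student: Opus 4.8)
The plan is to feed \Cref{cor:kuratowksi} — a non-planar critically edge-apex graph is a subdivision of $K_5$ or of $K_{3,3}$ — into two simple counting facts: the edge count of such subdivisions, and the fact that every $\mathcal{R}_3$-circuit has minimum degree at least $4$. I would first record the latter fact. Let $C$ be an $\mathcal{R}_3$-circuit and $w\in V(C)$. If $d_C(w)\le 2$, then at a generic realisation the (at most two) edge vectors at $w$ are linearly independent, so the equilibrium condition at $w$ forces every equilibrium stress of $C$ to vanish on the edges incident with $w$; this contradicts the fact that a circuit carries a stress non-zero on every edge. If $d_C(w)=3$, then $C$ is a $3$-dimensional $0$-extension of $C-w$, so by \Cref{lem:01ext} the dependence of $C$ forces $C-w$ to be $\mathcal{R}_3$-dependent, hence $C-w$ contains an $\mathcal{R}_3$-circuit; but this is a proper subgraph of $C$, contradicting minimality. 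So $\delta(C)\ge4$.

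For the second assertion, if $G$ is planar then it is $\mathcal{R}_3$-independent by \Cref{thm:gluck}, so assume $G$ is non-planar; by \Cref{cor:kuratowksi} it is a subdivision of $K_5$ or of $K_{3,3}$. A subdivision of $K_{3,3}$ has maximum degree $3$, hence contains no subgraph of minimum degree $\ge4$, hence no $\mathcal{R}_3$-circuit, so it is $\mathcal{R}_3$-independent. Now let $G$ be a subdivision of $K_5$, with branch vertices $v_1,\dots,v_5$ of degree $4$ and all other vertices of degree $2$, and suppose for contradiction that $G$ contains an $\mathcal{R}_3$-circuit $C$. Since $\delta(C)\ge4$ and $\Delta(G)=4$, every $v\in V(C)$ satisfies $d_C(v)=d_G(v)=4$; in particular $V(C)$ consists only of branch vertices and every $G$-edge at a vertex of $V(C)$ lies in $C$. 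Picking any $v_i\in V(C)$, none of its $G$-neighbours can be a degree-$2$ vertex (such a vertex would lie in $V(C)$), so $N_G(v_i)=\{v_1,\dots,v_5\}\setminus\{v_i\}$, and hence $V(C)=\{v_1,\dots,v_5\}$; but then no branch vertex has a degree-$2$ neighbour, so no edge of $K_5$ was subdivided, i.e.\ $G=K_5$. Thus the only non-planar critically edge-apex graph containing an $\mathcal{R}_3$-circuit is $K_5$, and $K_5$ is indeed $\mathcal{R}_3$-dependent because it has $10>9=3\cdot5-6$ edges and so violates \Cref{lem:max}. Hence every critically edge-apex graph other than $K_5$ is $\mathcal{R}_3$-independent.

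Finally, a subdivision of $K_5$ on $n$ vertices has $n+5$ edges, and a subdivision of $K_{3,3}$ on $n$ vertices has $n+3$ edges; for no integer $n$ does either number equal $3n-6$. Since a minimally $\mathcal{R}_3$-rigid graph is $(3,6)$-tight it has exactly $3|V|-6$ edges, so \Cref{cor:kuratowksi} shows no non-planar critically edge-apex graph is minimally $\mathcal{R}_3$-rigid. The only delicate point I anticipate is the case analysis for subdivisions of $K_5$ in the second assertion — making sure an $\mathcal{R}_3$-circuit sitting inside such a graph is forced to swallow a degree-$2$ vertex; the rest is edge arithmetic and \Cref{thm:gluck}. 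An alternative proof of the second assertion bypasses the minimum-degree lemma: one builds $G$ from $K_5-e$ or $K_{3,3}-e$, which are $\mathcal{R}_3$-independent by \Cref{thm:gluck}, by repeatedly attaching a pendant vertex and then adding a single edge across a $1$-separation, applying \Cref{lem:intbridge}\ref{it:intbridge:rank} at each edge addition.
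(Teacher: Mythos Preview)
Your proof is correct. The first assertion uses exactly the paper's argument: edge arithmetic on subdivisions of $K_5$ and $K_{3,3}$ shows none are $(3,6)$-tight, so \Cref{lem:max} finishes.

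For the second assertion you take a genuinely different, and more self-contained, route. The paper simply observes that a proper subdivision of $K_5$ or $K_{3,3}$ is $(3,6)$-sparse and then invokes \Cref{t:ear} (the edge-apex characterisation), which in turn rests on the structure-tree decomposition of \Cref{structure} and Whiteley's \Cref{thm:whiteleytriangplusedge}. You instead establish directly that every $\mathcal{R}_3$-circuit has minimum degree at least $4$ and then argue combinatorially that no proper subdivision of $K_5$ or $K_{3,3}$ can contain such a subgraph. Your degree-chasing for subdivisions of $K_5$ is clean: once $\delta(C)\ge 4=\Delta(G)$, every vertex of $C$ is a branch vertex with all its $G$-edges in $C$, which forces $V(C)$ to be the full branch set and hence $G=K_5$. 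The advantage of your approach is that it avoids the machinery behind \Cref{t:ear} entirely, needing only \Cref{thm:gluck}, \Cref{lem:01ext}, and \Cref{cor:kuratowksi}; the advantage of the paper's approach is brevity once \Cref{t:ear} is in hand.
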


\begin{proof}
It is easy to deduce from \Cref{cor:kuratowksi} that no such graph is $(3,6)$-tight and hence the first statement follows from \Cref{lem:max}.
The second conclusion follows immediately from \Cref{t:ear} since \Cref{cor:kuratowksi} implies $(3,6)$-sparsity.
\end{proof}

\begin{theorem}\label{thm:critkedgeapex}
Let $G$ be a critically $k$-edge-apex graph on $n$ vertices for any $n\geq  k+5$. Then $G$ is $\mathcal{R}_3$-independent if and only if it is $(3,6)$-sparse. Furthermore if $k\leq 7$ then $G$ is $\mathcal{R}_3$-independent if and only if it is $(3,6)$-sparse. 
\end{theorem}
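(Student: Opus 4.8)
The plan is to reduce the $k$-edge-apex case to the edge-apex result \Cref{t:ear} by an induction on $k$, using the structure theory of planar triangulations (\Cref{structure}, \Cref{thm:whiteleytriangplusedge}) in the same spirit as the proof of \Cref{t:ear}. By \Cref{lem:max} only sufficiency needs proof, so assume $G$ is $(3,6)$-sparse and critically $k$-edge-apex. Since $G$ is critically $k$-edge-apex, deleting \emph{any} single edge $e$ yields a graph $G-e$ that is critically $(k-1)$-edge-apex (deleting any further $k-1$ edges of $G-e$ gives a planar graph), and $G-e$ is of course still $(3,6)$-sparse. If one could arrange that $G-e$ also has at least $(k-1)+5$ vertices and apply the inductive hypothesis, then $G-e$ is $\mathcal{R}_3$-independent; the task is then to show that adding back the single edge $e$ keeps independence, i.e.\ that $e$ does not lie in the span of $G-e$.

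First I would handle the regime $n \geq k+5$ by this induction, with base case $k=1$ being exactly \Cref{t:ear}. The delicate point in the inductive step is the "add back $e$" move: $G-e$ being $\mathcal{R}_3$-independent does not by itself preclude $e$ from being dependent on it. To control this I would select $e$ cleverly. Since $G$ is $(3,6)$-sparse but (in the non-trivial case) non-planar, fix an edge set $F=\{f_1,\dots,f_k\}$ whose deletion planarises $G$; because $G$ is critical, every edge lies in some such minimal planarising set, and in particular one can choose $e\in F$. Then $G-e$ is planarised by $F\setminus\{e\}$, so $G-e$ is $(k-1)$-edge-apex, and criticality of $G$ forces $G-e$ to be critically $(k-1)$-edge-apex. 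Extend the planar graph $G-F$ to a triangulation $G^\Delta$ by adding edges $e_1,\dots,e_m$ (here $(3,6)$-sparsity of $G$ guarantees $m\geq 1$ whenever $G$ is non-planar, since $|E(G)|\le 3n-6$), decompose $G^\Delta$ into $4$-blocks via \Cref{structure}, and run the argument of \Cref{t:ear} for the single edge $e$: there is a $4$-connected subgraph $H$ of $G^\Delta+e$ with $H-e$ a triangulation, so $H$ is a rigid $\mathcal{R}_3$-circuit by \Cref{thm:whiteleytriangplusedge}, and $(3,6)$-sparsity of $G$ forces $H$ to use some added edge $e_i$. Thus $G^\Delta+e-e_i$ is $\mathcal{R}_3$-independent. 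The subtlety compared with \Cref{t:ear} is that $G$ is \emph{not} obtained from $G^\Delta+e-e_i$ by deleting only edges from one banana but rather by deleting $F\setminus\{e\}$ together with some of the $e_j$, so the bookkeeping of which added edges survive has to be done simultaneously for all of $F$. I expect the cleanest route is to iterate: peel off the edges of $F$ one at a time, at each stage arguing that the current graph embeds into a triangulation-plus-one-edge that contains a Whiteley circuit using a fresh added edge, so each original edge of $F$ can be "charged" to a distinct added edge $e_{i}$; since $G$ is $(3,6)$-sparse this charging is consistent and yields an $\mathcal{R}_3$-independent supergraph of $G$, hence $G$ is $\mathcal{R}_3$-independent.

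For the second, sharper statement — $k\le 7$ with no lower bound on $n$ — the point is that the hypothesis $n\ge k+5$ is only needed so that the induction does not bottom out at a graph too small to be non-planar; when $k\le 7$ one can instead appeal to a finite case analysis. A critically $k$-edge-apex graph on few vertices is heavily constrained: by Kuratowski's theorem (cf.\ the argument behind \Cref{cor:kuratowksi}) its non-planar "core" is built from subdivisions of $K_5$ and $K_{3,3}$, and the edge-apicity bound $\binom{n-3}{2}$ for $K_n$ together with $(3,6)$-sparsity severely limits the possibilities when $k\le 7$ and $n < k+5$, i.e.\ $n\le 11$. I would enumerate (or cite the computational enumeration behind \Cref{tab:sparse-apex} and the surrounding tables, which already certify $\mathcal{R}_3$-independence for all small $(3,6)$-sparse graphs in the relevant range, in combination with the result of \cite{GGJN} that settles all graphs on at most $9$ vertices) the finitely many $(3,6)$-sparse critically $k$-edge-apex graphs with $k\le 7$ and $n\le 11$ and verify $\mathcal{R}_3$-independence directly, then combine with the $n\ge k+5$ case to cover all $n$.

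The main obstacle is the inductive "add back the edge $e$" step: ensuring that removing one edge from a $(3,6)$-sparse critically $k$-edge-apex graph lands in the inductive hypothesis \emph{and} that re-adding the edge preserves $\mathcal{R}_3$-independence rather than creating a circuit. The resolution I am banking on is that the Whiteley-circuit mechanism of \Cref{t:ear} localises the only possible dependence to a $4$-connected triangulation-plus-edge, and $(3,6)$-sparsity of the \emph{original} $G$ forces any such circuit to contain an added (non-original) edge, so the dependence never involves only edges of $G$. Making the simultaneous charging of the $k$ planarising edges to $k$ distinct added edges watertight — in particular checking that the $4$-blocks and their structure trees behave well under successive edge additions — is where the real work lies.
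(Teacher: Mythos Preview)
Your inductive framework is fine up to the point where you re-insert the deleted edge, but the ``charging'' step is a genuine gap, not just missing bookkeeping. The Whiteley argument from \Cref{t:ear} identifies a \emph{rigid} circuit (with $3s-5$ edges on $s$ vertices) only when a single edge is added to a triangulation; once you have removed an $e_{j_1}$ and added a second $f_2$, the ambient graph is no longer of that form, so the circuit through $f_2$ need not be rigid. A flexible $\mathcal{R}_3$-circuit is $(3,6)$-sparse, so $(3,6)$-sparsity of $G$ no longer forces it to use an added edge. In fact your charging step uses neither the hypothesis $n\ge k+5$ nor criticality, so if it worked it would prove that every $(3,6)$-sparse $k$-edge-apex graph is $\mathcal{R}_3$-independent for all $k$; but the double banana is $(3,6)$-sparse, $2$-edge-apex, and $\mathcal{R}_3$-dependent. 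The paper's own post-\Cref{t:ear} discussion of \dbgraph\ (two different triangulation completions, one of which makes both Whiteley circuits share the same added edges) illustrates exactly how the charging can fail.

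The paper's proof takes a different route that does exploit $n\ge k+5$. Assuming $G$ is critically $k$-edge-apex but not critically $(k-1)$-edge-apex, deleting \emph{any} $(k-1)$ edges leaves a non-planar critically edge-apex graph, which by \Cref{cor:kuratowksi} is a subdivision of $K_5$ or $K_{3,3}$ and hence has at least $n-5$ vertices of degree $2$. Since $k-1<n-5$, those $k-1$ deletions cannot have lowered all of them from degree $\ge 4$, so $G$ has a vertex $v$ of degree at most $3$. Now $G-v$ is $(3,6)$-sparse and critically $j$-edge-apex for some $j<k$ with $n-1\ge j+5$, so induction on $k$ (base case \Cref{lem:critedgeind}) and a $0$-extension via \Cref{lem:01ext} finish the first statement. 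For $k\le 7$ your plan is essentially right and matches the paper: combine the $n\ge k+5$ case with a finite check that no $(3,6)$-sparse $\mathcal{R}_3$-circuit on at most $11$ vertices is critically $k$-edge-apex for $k\le 7$.
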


\begin{proof}
The necessity is \Cref{lem:max}.
Suppose $G$ is critically k-edge-apex and not critically $(k-1)$-edge-apex. Then, for any set of edges $F=\{e_1,e_2,\dots,e_{k-1}\} \subset E$, $G-F$ is critically edge-apex and non-planar.
By \Cref{cor:kuratowksi}, $G-F$ can be constructed from $K_5$ or $K_{3,3}$ by a sequence of subdivisions.
Hence,
either $G - F$ is a subdivision of $K_5$ and hence has degree sequence $(2^{(n-5)},4^{(5)})$,
or $G - F$ is a subdivision of $K_{3,3}$ and hence has degree sequence $(2^{(n-6)},3^{(6)})$.
In either case,
$F$ would need to have size at least $n-5$ so that every degree 2 or 3 vertex in $G-F$ has degree at least 4 in $G$.
Since $|F| = k-1 < n - 5$, the minimum degree in $G$ is at most 3.

Let $v$ have degree 3. Then $G-v$ is $(3,6)$-sparse and critically $j$-edge-apex for some $j<k$. Since $n-1\geq j+5$ we can apply induction on $k$ to see that $G-v$ is $\mathcal{R}_3$-independent. (The basis of induction is \Cref{lem:critedgeind}.) That $G$ is $\mathcal{R}_3$-independent now follows from \Cref{lem:01ext}.

 Take $k\leq 7$. The only $(3,6)$-sparse graphs on at most 9 vertices that are not $\mathcal{R}_3$-independent (see \Cref{ex:db} for the unique such graph when $|V|\leq 8$ and \cite[Figure 2 and Theorem 1]{GGJN} for the three non-isomorphic graphs when $|V|=9$) are not critically $k$-edge-apex.
There exist 45 $(3,6)$-sparse $\mathcal{R}_3$-circuits with 10 vertices and 1133 $(3,6)$-sparse $\mathcal{R}_3$-circuits with 11 vertices \cite{DataCircuits}.
None of these graphs are critically $k$-edge-apex for $k\leq 7$. 
Any $\mathcal{R}_3$-dependent $(3,6)$-sparse graph on at most 11 vertices would need to contain one of these graphs,
which would violate the critical $k$-edge-apex property. The second conclusion of the theorem now follows. 
\end{proof}

Since \dbgraph\ is critically 8-edge-apex the theorem is, in a sense, best possible.

\subsection{Critically apex graphs}

To analyse critically apex graphs we use the following theorem which summarises results from 3 papers \cite{FM,F78,FL81} on the reconstruction problem. 

\begin{theorem}\label{thm:reconstruct}
Let $G$ be a graph. Then:
\begin{enumerate}
    \item\label{it:degree} if $G$ is critically apex then it has minimum degree at most 5;
    \item\label{it:reconstruct:mindeg5} if $G$ has minimum degree 5, it is critically apex if and only if it is planar;
    \item\label{it:reconstruct:mindeg4} if $G$ has minimum degree 4, it is critically apex and non-planar if and only if it is one of the graphs depicted in \Cref{fig:fiorini};
    \item\label{it:reconstruct:mindeg3} if $G$ has minimum degree 3 and either it has at least two vertices of degree 3, its order is at least 7, and $|E(G)| = 3|V (G)| - 6$, or else it has a unique vertex of degree 3 whose neighbours induce a cycle, then G cannot be a non-planar critically apex graph.
\end{enumerate}
\end{theorem}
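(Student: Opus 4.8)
The plan is to handle part~\ref{it:degree} by a direct Euler-formula count and to obtain parts~\ref{it:reconstruct:mindeg5}--\ref{it:reconstruct:mindeg3} by observing that a \emph{non-planar critically apex graph} is exactly what \cite{FM,F78,FL81} call a critical non-planar graph, and then quoting and translating the structural classifications proved there. Note that for a planar $G$ all four statements are automatic or vacuous (a planar graph is trivially critically apex, and it is neither in the list of \Cref{fig:fiorini} nor a counterexample to part~\ref{it:reconstruct:mindeg3}), so the content lies in the non-planar case throughout.

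For part~\ref{it:degree} I would argue as follows. If $G$ is planar it has a vertex of degree at most $5$ by the usual Euler-formula estimate. So assume $G$ is non-planar and critically apex, with $n$ vertices, $m$ edges, and minimum degree $\delta$ attained at $v$. A simple non-planar graph has $n\ge 5$, so $G-v$ is a simple planar graph on $n-1\ge 4$ vertices; hence $m-\delta=|E(G-v)|\le 3(n-1)-6$, i.e.\ $m\le 3n-9+\delta$. Combining this with $2m\ge n\delta$ gives $\delta(n-2)\le 6(n-3)$, so $\delta\le 6-\tfrac{6}{n-2}<6$, and therefore $\delta\le 5$.

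For parts~\ref{it:reconstruct:mindeg5}--\ref{it:reconstruct:mindeg3}, each assertion is a restatement of a known result about critical non-planar graphs. The forward implication of part~\ref{it:reconstruct:mindeg5} is trivial; its reverse implication and the finite classification in part~\ref{it:reconstruct:mindeg4} are the minimum-degree analysis of \cite{F78}, and the two non-existence statements in part~\ref{it:reconstruct:mindeg3} are the degree-$3$ cases of \cite{FL81}. The work left on our side is bookkeeping: confirming that the graphs drawn in \Cref{fig:fiorini} are precisely the minimum-degree-$4$ critical non-planar graphs produced in \cite{F78} (checking both the number of graphs and their degree sequences), and confirming that the two hypotheses stated in part~\ref{it:reconstruct:mindeg3} --- at least two degree-$3$ vertices together with order at least $7$ and $|E(G)|=3|V(G)|-6$, respectively a unique degree-$3$ vertex whose neighbourhood induces a cycle --- are exactly the hypotheses under which non-existence is established in the cited papers.

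The main obstacle, were one to insist on a self-contained proof, is precisely the classification in part~\ref{it:reconstruct:mindeg4} and the non-existence arguments in part~\ref{it:reconstruct:mindeg3}: these rest on a long case analysis of how a planar triangulation on $n-1$ vertices can be augmented to a non-planar graph all of whose vertex-deletions remain planar, controlled by Euler's formula and the facial structure of triangulations. I would not reproduce this, but instead use \cite{FM,F78,FL81} as a black box; what matters downstream is only that parts~\ref{it:reconstruct:mindeg4} and~\ref{it:reconstruct:mindeg3} make the non-planar critically apex graphs of small minimum degree completely explicit, which is exactly the input needed for the subsequent characterisation of $\mathcal{R}_3$-independence of critically apex graphs.
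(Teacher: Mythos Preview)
Your proposal is correct and matches the paper's approach essentially line for line: the paper proves part~\ref{it:degree} by the same Euler-formula count (phrased by contradiction rather than your direct inequality $\delta(n-2)\le 6(n-3)$), and for parts~\ref{it:reconstruct:mindeg5}--\ref{it:reconstruct:mindeg3} it likewise cites \cite{FM,F78,FL81} as a black box without reproducing the classification. Your additional remarks about the bookkeeping needed to match \Cref{fig:fiorini} to the output of \cite{F78} are accurate and slightly more explicit than the paper itself.
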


\begin{proof}
\ref{it:degree}
Suppose not. 
Since $G-v$ is planar for all $v\in V$ it has minimum degree 5, so $G$ contains a vertex $u$ of degree $6$. Since $|E(G-u)|\leq 3|V(G-u)|-6$ we have $|E|\leq 3|V|-3$ which implies $G$ has a vertex of degree less than 6, a contradiction.  

\ref{it:reconstruct:mindeg5}-\ref{it:reconstruct:mindeg3} were proved in \cite{FM,F78,FL81}.
\end{proof}

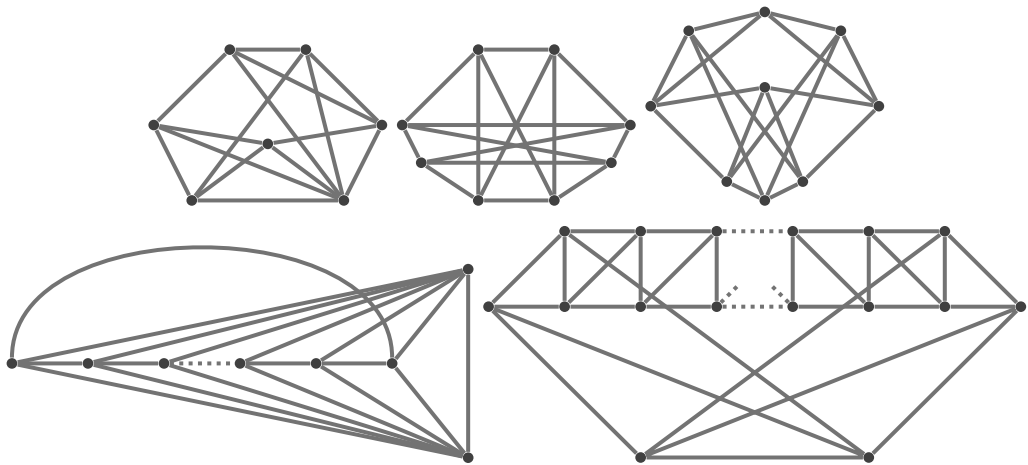
\begin{figure}[ht]
    \centering
    \begin{tikzpicture}
        \node[vertex] (1) at (0,0) {};
        \node[vertex] (2) at (2,-1) {};
        \node[vertex] (3) at (-0.5,-2) {};
        \node[vertex] (a) at (1.5,-2) {};
        \node[vertex] (b) at (-1,-1) {};
        \node[vertex] (c) at (1,0) {};
        \node[vertex] (n) at (0.5,-1.25) {};
        \draw[edge] (1)edge(c) (1)edge(2) (1)edge(b) (1)edge(a) (2)edge(c) (2)edge(n) (2)edge(a) (3)edge(a) (3)edge(b) (3)edge(c) (3)edge(n) (a)edge(n) (a)edge(c) (b)edge(n) (b)edge(a);
    \end{tikzpicture}
    \begin{tikzpicture}
        \node[vertex] (1) at (0,0) {};
        \node[vertex] (2) at (2,-1) {};
        \node[vertex] (3) at (-0,-2) {};
        \node[vertex] (a) at (1,-2) {};
        \node[vertex] (b) at (-1,-1) {};
        \node[vertex] (c) at (1,0) {};
        \node[vertex] (u) at (-0.75,-1.5) {};
        \node[vertex] (v) at (1.75,-1.5) {};
        \draw[edge] (1)edge(c) (1)edge(3) (1)edge(a) (1)edge(b) (2)edge(b) (2)edge(c) (2)edge(u) (2)edge(v) (3)edge(a) (3)edge(c) (3)edge(u) (a)edge(v) (a)edge(c) (b)edge(u) (b)edge(v) (u)edge(v);
    \end{tikzpicture}
    \begin{tikzpicture}
        \node[vertex] (1) at (-0.5,0) {};
        \node[vertex] (2) at (2,-1) {};
        \node[vertex] (3) at (-0,-2) {};
        \node[vertex] (a) at (1,-2) {};
        \node[vertex] (b) at (-1,-1) {};
        \node[vertex] (c) at (1.5,0) {};
        \node[vertex] (u) at (0.5,0.25) {};
        \node[vertex] (v) at (0.5,-0.75) {};
        \node[vertex] (w) at (0.5,-2.25) {};
        \draw[edge] (1)edge(a) (1)edge(u) (1)edge(w) (1)edge(b) (2)edge(a) (2)edge(c) (2)edge(v) (2)edge(u) (3)edge(b) (3)edge(c) (3)edge(v) (3)edge(w) (a)edge(v) (a)edge(w) (b)edge(v) (b)edge(u) (c)edge(u) (c)edge(w);
    \end{tikzpicture}

    \begin{tikzpicture}
        \node[vertex] (1) at (1,0) {};
        \node[vertex] (2) at (2,0) {};
        \node[vertex] (3) at (3,0) {};
        \node[vertex] (4) at (4,0) {};
        \node[vertex] (5) at (5,0) {};
        \node[vertex] (6) at (6,0) {};
        \node[vertex] (7) at (7,1.25) {};
        \node[vertex] (8) at (7,-1.25) {};
        \draw[edge] (1)edge(2) (2)edge(3) (4)edge(5) (5)edge(6);
        \draw[edge] (1)edge(7) (2)edge(7) (3)edge(7) (4)edge(7) (5)edge(7) (6)edge(7);
        \draw[edge] (1)edge(8) (2)edge(8) (3)edge(8) (4)edge(8) (5)edge(8) (6)edge(8);
        \draw[edge] (7)edge(8);
        \draw[edge,dotted] (3)edge(4);
        \draw[edge] (6)to[out=90,in=90](1);
    \end{tikzpicture}
    \begin{tikzpicture}
        \node[vertex] (1) at (1,0) {};
        \node[vertex] (2) at (2,0) {};
        \node[vertex] (3) at (3,0) {};
        \node[vertex] (4) at (4,0) {};
        \node[vertex] (5) at (5,0) {};
        \node[vertex] (6) at (6,0) {};
        \node[vertex] (7) at (0,-1) {};
        \node[vertex] (8) at (1,-1) {};
        \node[vertex] (9) at (2,-1) {};
        \node[vertex] (10) at (3,-1) {};
        \node[vertex] (11) at (4,-1) {};
        \node[vertex] (12) at (5,-1) {};
        \node[vertex] (13) at (6,-1) {};
        \node[vertex] (14) at (7,-1) {};
        \node[vertex] (15) at (2,-3) {};
        \node[vertex] (16) at (5,-3) {};
        \draw[edge] (1)--(2) (2)--(3) (4)--(5) (5)--(6);
        \draw[edge] (7)--(8) (8)--(9) (9)--(10) (11)--(12) (12)--(13) (13)--(14);
        \draw[edge,dotted] (3)edge(4) (10)--(11) (10)--($(10)!0.3!(4)$) (11)--($(11)!0.3!(3)$);
        \draw[edge] (7)--(1) (8)--(2) (9)--(3) (12)--(4) (13)--(5) (14)--(6);
        \draw[edge] (1)--(8) (2)--(9) (3)--(10) (4)--(11) (5)--(12) (6)--(13);
        \draw[edge] (15)--(7) (15)--(6) (15)--(14) (15)--(16) (16)--(1) (16)--(7) (16)--(14);
    \end{tikzpicture}
    \caption{Graphs found in \cite[Figure~1]{FM}. For clarity we note that the infinite family in the bottom left is the cone of a wheel and the final infinite family is 4-regular.}
    \label{fig:fiorini}
\end{figure}

This result allows us to characterise $\mathcal{R}_3$-rigidity for critically apex graphs.

\begin{theorem}\label{thm:critapex}
Let $G$ be a critically apex graph. Then $G$ is $\mathcal{R}_3$-independent if and only if $G$ is $(3,6)$-sparse.
\end{theorem}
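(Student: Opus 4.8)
\emph{Necessity} is immediate from \Cref{lem:max}. For \emph{sufficiency}, suppose $G$ is a $(3,6)$-sparse critically apex graph. If $G$ is planar, then $G$ is $\mathcal{R}_3$-independent by \Cref{thm:gluck}, so assume from now on that $G$ is non-planar. The plan is a short case analysis on the minimum degree $\delta(G)$, exploiting the fact that in a critically apex graph \emph{every} vertex is an apex vertex, so we may pick whichever one is convenient.

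First I would bound $\delta(G)$. By \Cref{thm:reconstruct}\ref{it:degree} we have $\delta(G)\le 5$, and by \Cref{thm:reconstruct}\ref{it:reconstruct:mindeg5} a critically apex graph of minimum degree exactly $5$ is planar, which is excluded; hence $\delta(G)\le 4$. This is precisely where the reconstruction results are used; note parts \ref{it:reconstruct:mindeg4}--\ref{it:reconstruct:mindeg3} of \Cref{thm:reconstruct} are not needed for this theorem. Fix a vertex $v$ with $d_G(v)=\delta(G)$. Since $G$ is critically apex, $G-v$ is planar, hence $\mathcal{R}_3$-independent by \Cref{thm:gluck}; this is the graph from which $G$ will be reconstructed.

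If $\delta(G)\le 3$, then $G$ is obtained from $G-v$ by a $0$-extension when $d_G(v)=3$, and when $d_G(v)\le 2$ by adding a vertex of degree at most $2$, which preserves $\mathcal{R}_3$-independence just as well (e.g.\ complete it to a genuine $0$-extension with one dummy edge and then pass to the spanning subgraph $G$); either way \Cref{lem:01ext} shows $G$ is $\mathcal{R}_3$-independent. If $\delta(G)=4$, then $v$ is an apex vertex with $d_G(v)=4\in\{2,3,4,|V|-1\}$, so \Cref{prop:apex} applies directly and, using that $G$ is $(3,6)$-sparse, gives that $G$ is $\mathcal{R}_3$-independent. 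This completes the sufficiency. (Alternatively, the case $\delta(G)=4$ could be handled through the explicit classification in \Cref{thm:reconstruct}\ref{it:reconstruct:mindeg4}: the cone-of-a-wheel family in \Cref{fig:fiorini} is not $(3,6)$-sparse and so is excluded, and the remaining graphs in the list can be checked individually; but invoking \Cref{prop:apex} subsumes this.)

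I do not expect a genuine obstacle here, since all the real content is outsourced to \Cref{thm:reconstruct}, \Cref{prop:apex} and \Cref{thm:gluck}. The two points that must be got right are: (i) that the case split on $\delta(G)$ is exhaustive for non-planar $G$, which rests exactly on \Cref{thm:reconstruct}\ref{it:degree} and \ref{it:reconstruct:mindeg5} ruling out $\delta(G)\ge 5$; and (ii) the observation that when $\delta(G)=4$ a minimum-degree vertex is simultaneously an apex vertex of a degree to which \Cref{prop:apex} is applicable. The trivially small cases ($|V|\le 4$, where $G$ is planar and $\mathcal{R}_3$-independent anyway) should be acknowledged but cause no difficulty.
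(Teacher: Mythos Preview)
Your proof is correct. The necessity, the handling of $\delta(G)\le 3$, and the use of \Cref{thm:reconstruct}\ref{it:degree}--\ref{it:reconstruct:mindeg5} to reduce to $\delta(G)\le 4$ are identical to the paper's argument.

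Where you diverge is the case $\delta(G)=4$: the paper invokes \Cref{thm:reconstruct}\ref{it:reconstruct:mindeg4}, so that $G$ must be one of the graphs in \Cref{fig:fiorini}, and then checks these explicitly (the three sporadic ones directly, the cone-of-wheel family is not $(3,6)$-sparse, the $4$-regular family by repeated $0$/$1$-extensions). You instead observe that a minimum-degree vertex is an apex vertex of degree $4$ and apply \Cref{prop:apex} directly. This is a legitimate shortcut: there is no circularity, since \Cref{prop:apex} rests only on \Cref{thm:gluck}, \Cref{lem:cone}, \Cref{lem:01ext}, \Cref{lem:1-reduce} and \Cref{t:ear}. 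Your route is tidier and, as you note, makes parts \ref{it:reconstruct:mindeg4}--\ref{it:reconstruct:mindeg3} of \Cref{thm:reconstruct} unnecessary here; the price is that it hides the work inside the counting argument of \Cref{prop:apex} rather than in a finite case check. The paper's route, by contrast, is more self-contained within \Cref{sec:critical} and makes visible exactly which critically apex graphs are at stake. You correctly anticipate this alternative in your parenthetical remark.
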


\begin{proof}
Necessity is \Cref{lem:max}.
We prove the sufficiency. By \Cref{thm:gluck} we may assume $G$ is non-planar. Assume $G$ is critically apex and $(3,6)$-sparse. Suppose $G$ has a vertex of degree at most 3. Then $G-v$ is planar and $(3,6)$-sparse. Hence, $G-v$ is $\mathcal{R}_3$-independent by \Cref{thm:gluck} and thus $G$ is $\mathcal{R}_3$-independent by \Cref{lem:01ext}.
Then, $(3,6)$-sparsity implies $G$ has minimum degree at most 5 and, by \Cref{thm:reconstruct}, we may suppose that $G$ has minimum degree precisely 4 and is one of the graphs in \Cref{fig:fiorini}. The first 3 graphs are $(3,6)$-sparse and easily seen to be $\mathcal{R}_3$-independent. All graphs in the first infinite family are not $(3,6)$-sparse. Using \Cref{lem:01ext} it is easy to show that all graphs in the second infinite family are $\mathcal{R}_3$-independent completing the proof.
\end{proof}

To extend this to critically apex we will use a new lemma. Extending an operation used in \cite{JJT14} we say that a graph $G$ is formed from another graph $G'$ by a \emph{double-1-extension} if we obtain $G$ by deleting an edge $uv$ and adding two new vertices $a,b$ and 7 new edges $ab$, 3 edges incident to $a$ and 3 edges incident to $b$ such that $u,v \in N(a)\cup N(b)$.

\begin{lemma}\label{lem:double1ext}
Let $G'$ be $\mathcal{R}_3$-independent and let $G$ be obtained from $G'$ by a double-1-extension. Then $G$ is $\mathcal{R}_3$-independent.   
\end{lemma}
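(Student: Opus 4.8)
The plan is to realise the double-$1$-extension as a sequence of two moves, each of which is a $0$-extension or a $1$-extension, and then apply \Cref{lem:01ext} twice. Write $N_G(a)=\{u_1,u_2,u_3,b\}$ and $N_G(b)=\{w_1,w_2,w_3,a\}$ with all $u_i,w_j\in V(G')$, and recall that $uv\in E(G')$ and $\{u,v\}\subseteq\{u_1,u_2,u_3\}\cup\{w_1,w_2,w_3\}$. The argument splits into two cases according to whether $\{u,v\}$ is contained in the set of $G'$-neighbours of a single one of the two new vertices.

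\emph{Case 1: $\{u,v\}\subseteq\{u_1,u_2,u_3\}$ (the case $\{u,v\}\subseteq\{w_1,w_2,w_3\}$ being symmetric).} Say $u=u_1$ and $v=u_2$. First let $G_1$ be obtained from $G'$ by the $0$-extension that adds the vertex $b$ joined to $w_1,w_2,w_3$; by \Cref{lem:01ext}, $G_1$ is $\mathcal{R}_3$-independent. Next let $G_2$ be obtained from $G_1$ by deleting $uv$ and adding the vertex $a$ joined to $u,v,u_3$ and $b$. Since $d_{G_2}(a)=4$, since $u,v\in N_{G_2}(a)$, and since $uv$ is exactly the edge reinstated upon deleting $a$, this is a $1$-extension of $G_1$, so $G_2$ is $\mathcal{R}_3$-independent by \Cref{lem:01ext}. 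Comparing vertex and edge sets shows $G_2=G$, and all vertices named in the two extensions are distinct because $G$ is simple and $a,b\notin V(G')$.

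\emph{Case 2: otherwise.} Then $\{u,v\}$ is contained in neither $\{u_1,u_2,u_3\}$ nor $\{w_1,w_2,w_3\}$; as it is contained in their union, we may relabel so that $u=u_1\notin\{w_1,w_2,w_3\}$ and $v=w_1\notin\{u_1,u_2,u_3\}$. First let $G_1$ be obtained from $G'$ by deleting $uv$ and adding $a$ joined to $u,v,u_2,u_3$; this is a $1$-extension of $G'$, so $G_1$ is $\mathcal{R}_3$-independent. Then let $G_2$ be obtained from $G_1$ by deleting the edge $av$ (which is present in $G_1$) and adding $b$ joined to $a,v,w_2,w_3$; since $d_{G_2}(b)=4$ and $a,v\in N_{G_2}(b)$, this is again a $1$-extension, so $G_2$ is $\mathcal{R}_3$-independent, and once more one checks $G_2=G$.

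I expect the only real subtlety to be setting up the case split correctly, and in particular recognising that in Case 1 one must add $b$ by a $0$-extension \emph{first} and only then add $a$ by a $1$-extension: if one instead tried to add $a$ directly to $G'$ by a $1$-extension (deleting $uv$) and then add $b$ by a $1$-extension, the intermediate graph would need a spurious fourth neighbour of $a$ lying in $N_G(b)$, which may not exist when $a$ and $b$ have the same three neighbours in $V(G')$. Beyond this, the edge-set bookkeeping verifying $G_2=G$ in each case, and the checks that the vertices involved in each extension are pairwise distinct, are routine.
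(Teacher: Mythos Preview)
Your proof is correct. Case~1 coincides exactly with the paper's argument. In Case~2, however, you take a genuinely different and more elementary route: you factor the double-$1$-extension as two successive $1$-extensions, first adding $a$ with the temporary edge $av$ (using $uv$ as the deleted edge), then adding $b$ and using $av$ as the edge to delete. The paper instead handles this case geometrically: it places $a$ and $b$ on the line through $p(u)$ and $p(v)$ in a specific (non-generic) realisation and argues via a rank equality on the collinear $4$-cycle $u,a,b,v$. Your approach has the advantage of staying entirely within the combinatorial framework of \Cref{lem:01ext}, avoiding any special-position argument; the paper's approach, while slightly less direct here, illustrates a technique that generalises to situations where no decomposition into standard extensions is available.
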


\begin{proof}
Let $G$ be obtained from $G'$ by deleting an edge $uv$ and adding two new vertices $a,b$ and 7 new edges $ab$, 3 edges incident to $a$ and 3 edges incident to $b$ such that $u,v \in N(a)\cup N(b)$.  Suppose first that $u,v\in N(a)$. Then $G$ can be obtained by a 0-extension adding $b$ followed by a 1-extension on $uv$ adding $a$. It follows from \Cref{lem:01ext} that $G$ is $\mathcal{R}_3$-independent.

Hence, we may suppose $u\in N(a)\setminus N(b)$ and $v\in N(b)\setminus N(a)$. Choose a generic framework $(G',p')$ in $\mathbb{R}^3$ and define $p$ by putting $p(x)=p'(x)$ for all $x\in V(G')$ and putting $p(a),p(b)$ on distinct points of the line defined by $p(u)$ and $p(v)$. Then $G+uv-ab$ is obtained from $G'$ by two 0-extensions so $\rank (G+uv-ab,p)=\rank (G',p')+6$ by \Cref{lem:01ext}\footnote{Technically we are using a geometric version of 0-extension, and not a generic one, but the conclusion is easy to see since $a$ and its neighbours (resp. $b$ and its neighbours) are affinely spanning.}. In $G+uv$, the 4-cycle induced by $u,a,b,v$ is realised as a collinear cycle in $(G+uv,p)$ and hence it is easy to see that 
\begin{equation*}
    \rank (G',p')+6=\rank (G+uv-ab,p)=\rank (G+uv,p)=\rank (G,p).
\end{equation*}
It follows that $G$ is $\mathcal{R}_3$-independent.
\end{proof}

\begin{theorem}\label{thm:crit2apex}
Let $G=(V,E)$ be a critically 2-apex graph. Then $G$ is $\mathcal{R}_3$-independent if and only if $G$ is $(3,6)$-sparse.
\end{theorem}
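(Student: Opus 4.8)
The plan is to prove necessity by \Cref{lem:max} and sufficiency by reducing $G$ to a smaller graph already known to be $\mathcal{R}_3$-independent. Suppose then that $G$ is $(3,6)$-sparse and critically $2$-apex. If $G$ is planar we are done by \Cref{thm:gluck}, and if $G$ is critically apex we are done by \Cref{thm:critapex}; so assume $G$ is non-planar and some vertex $v_0$ satisfies $G-v_0$ non-planar. The observations used throughout are that $G-v$ is critically apex and $(3,6)$-sparse, hence $\mathcal{R}_3$-independent by \Cref{thm:critapex}, for every $v\in V$, and that $G-\{a,b\}$ is planar for every pair $a,b\in V$. Two consequences are immediate: if $G$ has a vertex $v$ of degree at most $3$, then $G$ arises from the $\mathcal{R}_3$-independent graph $G-v$ by adding a vertex of degree at most $3$, so $G$ is $\mathcal{R}_3$-independent by \Cref{lem:01ext}; and if $G$ has a vertex $w$ of degree at most $4$ with $G-w$ planar, then $G$ is apex with an apex vertex of degree at most $4$, and $G$ is $\mathcal{R}_3$-independent by \Cref{prop:apex}. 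Hence we may assume $G$ has minimum degree at least $4$ and that $G-w$ is non-planar whenever $d_G(w)\le 4$.

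Next I would rule out minimum degree $5$ (note $(3,6)$-sparsity already forces minimum degree $4$ or $5$). If the minimum degree is $5$, then $G-v_0$ is a non-planar critically apex $(3,6)$-sparse graph of minimum degree at least $4$, so by \Cref{thm:reconstruct} it has minimum degree exactly $4$ and is one of the graphs in \Cref{fig:fiorini}. It cannot lie in the cone-of-wheel family, which is not $(3,6)$-sparse; and it cannot be $4$-regular, for then every vertex of $G-v_0$ would be a neighbour of $v_0$ (each degree-$4$ vertex of $G-v_0$ must be, else it would have degree $4$ in $G$), making $G$ the cone of $G-v_0$, and $(3,6)$-sparsity of $G$ would bound $G-v_0$ by $2(|V|-1)-3$ edges, impossible. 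So $G-v_0$ is one of the sporadic graphs of \Cref{fig:fiorini} and has at most $9$ vertices, whence $|V|\le 10$; but minimum degree $5$ with $(3,6)$-sparsity forces $|V|\ge 12$, a contradiction. Therefore $G$ has minimum degree exactly $4$.

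It remains to treat minimum degree $4$. Fix a vertex $w$ with $d_G(w)=4$; by the reductions above we may assume $G-w$ is non-planar, hence a non-planar critically apex $(3,6)$-sparse graph whose minimum degree is $3$ or $4$, whose structure is therefore constrained by \Cref{thm:reconstruct} (it is one of the graphs of \Cref{fig:fiorini}, or a minimum-degree-$3$ graph of the restricted form allowed there). Using this together with a critical-set analysis in the spirit of \Cref{lem:1-reduce}, the aim is to locate a reduction of one of two types. Type I is a non-edge $xy$ with $x,y\in N(w)$ such that $d_G(x)=4$ or $d_G(x)=|V|-2$ and $G-w+xy$ is $(3,6)$-sparse: then $(G-w+xy)-x=G-\{w,x\}$ is planar, so $G-w+xy$ is apex with apex vertex $x$ and is $\mathcal{R}_3$-independent by \Cref{prop:apex}, and since $G$ is a $1$-extension of it, $G$ is $\mathcal{R}_3$-independent by \Cref{lem:01ext}. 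Type II is a pair of adjacent degree-$4$ vertices $a,b$ and a non-edge $uv$ with $u,v\in N(a)\cup N(b)$ such that $G-\{a,b\}+uv$ is $(3,6)$-sparse: then $G-\{a,b\}$ is planar, so $G-\{a,b\}+uv$ is edge-apex and is $\mathcal{R}_3$-independent by \Cref{t:ear}, and since $G$ is a double-$1$-extension of it, $G$ is $\mathcal{R}_3$-independent by \Cref{lem:double1ext}.

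The main obstacle is precisely this last step: showing a sparsity-preserving reduction of Type I or Type II always exists. This is a sharper version of the minimum-degree-$4$ argument in the proof of \Cref{prop:apex}; the extra difficulty is that a $1$-reduction at $w$ can create an apex graph whose apex vertex has large degree, so one must sometimes pass to a $1$-reduction at another degree-$4$ vertex or, when no such move is available, to a double-$1$-reduction on a pair of adjacent degree-$4$ vertices (which is why \Cref{lem:double1ext} is needed), all of this driven by the explicit list in \Cref{thm:reconstruct,fig:fiorini} together with critical-set counting. Finally, the smallest configurations, in which $G-w$ is one of the sporadic graphs of \Cref{fig:fiorini} and hence $G$ has at most $10$ vertices, can be handled directly, for instance by invoking the enumeration of $\mathcal{R}_3$-independent graphs on at most $9$ vertices in \cite{GGJN} and an explicit check of the remaining $10$-vertex graphs.
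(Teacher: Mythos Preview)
Your architecture parallels the paper's---reduce via low-degree vertices and appeal to \Cref{thm:critapex}, \Cref{t:ear}, and \Cref{lem:double1ext}---and your elimination of minimum degree $5$ is a tidy alternative to the paper's case split on the minimum degree of $G-v$. But the proposal is explicitly incomplete at the decisive point: you write that ``the main obstacle is precisely this last step'' and then do not carry it out. The paper's proof is almost entirely devoted to that step. It drops your Type~I entirely and works only with Type~II: after disposing of the case where some $G-v$ has minimum degree $4$ (via the Fiorini list, much as you indicate), it shows that two \emph{adjacent} degree-$4$ vertices $a,b$ exist (since, once $G-u$ has minimum degree $3$ for every $u$, every vertex has a degree-$4$ neighbour---a point you skip), and then runs a concrete critical-set argument. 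If every non-edge $uv$ with $u,v\in N(a)\cup N(b)$ were blocked by a critical set $X\subset V\setminus\{a,b\}$, then $G[X]$ is a planar triangulation; but planarity of $G-\{b,w\}$ (or of $G[X\cup\{a,b\}]$) lets one contract the path through $a,b$ to insert the edge $uv$ into a planar drawing of $G[X]$, giving a planar graph with $3|X|-5$ edges, a contradiction. Two residual subcases (when $N(a)$ and $N(b)$ induce triangles and the blocking set misses one or two neighbours) are closed off by further counting. None of this is present in your sketch, and invoking ``\Cref{thm:reconstruct,fig:fiorini} together with critical-set counting'' does not substitute for it---indeed part~\ref{it:reconstruct:mindeg3} of \Cref{thm:reconstruct} is only a partial statement for minimum degree $3$, so it does not pin down $G-w$ in the way you suggest.

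Your Type~I idea is also unlikely to rescue the argument: after a $1$-reduction at $w$ the natural apex of $G-w+xy$ is $x$, whose degree in that graph equals $d_G(x)$, and there is no reason this lies in $\{4,|V|-2\}$; so \Cref{prop:apex} will typically not apply and you are forced back to Type~II, which is exactly where the missing work lies.
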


\begin{proof}
Let $G$ be critically 2-apex. Necessity is \Cref{lem:max}.
We prove sufficiency. Since $G$ is $(3,6)$-sparse the minimum degree is at most 5. By definition, $G-v$ is critically apex for all $v\in V$. 
Suppose that $v$ has degree at most 3 in $G$. Then $G-v$ is $(3,6)$-sparse and hence $\mathcal{R}_3$-independent by \Cref{thm:critapex} and \Cref{lem:01ext} implies that $G$ is $\mathcal{R}_3$-independent.
Hence, we may assume $G$ has minimum degree 4 or 5. 

If $G-v$ has minimum degree $5$ then $G-v$ is planar by \Cref{thm:reconstruct}\ref{it:reconstruct:mindeg5}. Hence, if $G-v$ has minimum degree 5 for all $v\in V$, then $G$ is critically apex and thus \Cref{thm:critapex} gives the result.

It follows that, for some $v\in V$, $G-v$ has minimum degree 3 or 4. 
Suppose $v$ is chosen so that $G-v$ has minimum degree 4. Then \Cref{thm:reconstruct}\ref{it:reconstruct:mindeg4} implies that $G-v$ is one of the graphs depicted in \Cref{fig:fiorini}. 
Since the first 3 such graphs have at most 9 vertices, $|V|\leq 10$ and hence it is easy to confirm by the data sets in \cite{DataCircuits} that $G$ is $\mathcal{R}_3$-independent. Moreover, the first infinite family contains no $(3,6)$-sparse graph, so we may assume that $G-v$ is an element of the second infinite family. 
All graphs in this family are 4-regular so $G$ has one vertex, $v$, of degree $k$ for some $k\geq 4$, $k$ vertices of degree 5 and $|V|-k-1$ vertices of degree 4. If $k=|V|-1$ then $v$ is a cone vertex and the result follows from \Cref{lem:cone} and \cite{Geiringer}. Hence, we may assume there exists a vertex $w$ of degree 4. Now by repeated application of 0- and 1-reduction operations we can reduce $G$ to $K_4$ and reverse this sequence using \Cref{lem:01ext} to show that $G$ is $\mathcal{R}_3$-independent. 

Hence, for all $u\in V$, $G-u$ has minimum degree 3. If $G$ had a unique vertex $w$ of degree 4 then $G-w$ would not have minimum degree 3. Hence, $G$ has at least two vertices of degree 4 and in fact since $G$ has minimum degree 4 and $G-u$ has minimum degree 3, $G$ has two adjacent vertices of degree 4.

Let $a,b$ denote adjacent degree 4 vertices.
If there exists distinct non-adjacent $u,v \in N_G(a) \cup N_G(b)$ such that $G' = G-\{a,b\} +uv$ is $(3,6)$-sparse,
then, as $G'$ is edge apex, \Cref{t:ear} implies that $G'$ is $\mathcal{R}_3$-independent.
As $G$ is formed from $G'$ by a double-1-extension, $G'$ is $\mathcal{R}_3$-independent by \Cref{lem:double1ext}.
Suppose this is not the case.

Suppose there exists non-adjacent $u,v \in N(a)$.
Then there exists a vertex set $X$ contained in $G-\{a,b\}$ where $u,v \in X$ and $i_G(X) = 3|X| - 6$.
Then $G[X]$ is a triangulation.
Suppose that there exists a vertex $w \in N(a) \cup N(b) \setminus X$.
Then the graph $G-\{b,w\}$ is planar and contains $G[X]$.
As $a$ is adjacent to both $u,v$ in the planar graph $G-\{b,w\}$,
then $G[X] + a + \{au,av\}$ is also planar.
This implies $G[X] + uv$ is also planar (as we can contract $au$ to get this graph).
However,
$G[X] + uv$ has $3|X|-5$ edges,
contradicting planarity.
Hence,
$N(a) \cup N(b) \subset X$.
However, this now implies $G[X + \{a,b\}]$ is not $(3,6)$-sparse,
a contradiction.

Suppose then that $N(a)$ and $N(b)$ induce copies of $K_3$.
If $N(a)=N(b)$ then $G$ contains a copy of $K_5$ contrary to $(3,6)$-sparsity. Hence, there exists $u\in N(a)\setminus N(b)$ and $v\in N(b)\setminus N(a)$. Suppose $uv\notin E$. Then there exists a vertex set $X$ contained in $G-\{a,b\}$ where $u,v \in X$ and $i_G(X) = 3|X| - 6$.
Then $G[X]$ is a triangulation.
If $N(a) \cup N(b) \subset X$ then $G[X + \{a,b\}]$ violates $(3,6)$-sparsity,
a contradicting.
If there exists two vertices $x,y \in N(a) \cup N(b) \setminus X$,
then $G[X + \{a,b\}]$ is planar.
By deleting all edges adjacent to $a,b$ except $ab,au,bv$ and then contracting $au ,bv$ we obtain a planar graph $G[X] + uv$.
However, this graph has $3|X|-5$ edges and so is not planar.

Hence,
there exists exactly one vertex $t \in N(a) \cup N(b) \setminus X$.
Let $H = G[X \cup \{a,b,t\}]$.
This subgraph is $(3,6)$-tight,
not planar (as the path $(u,a,b,v)$ could be contracted to give a non-planar subgraph), but is not 2-apex (as deleting $a$ creates a planar graph).
As $H$ is an induced subgraph,
it now must be critically apex,
and hence $V\setminus X$ contains exactly one vertex $s$.
Since $G$ is $(3,6)$-sparse and $H$ is $(3,6)$-tight,
$s$ has degree at most 3 in $G$,
contradicting our earlier assumption.
\end{proof}

Recall, from \Cref{ex:db}, that \dbgraph\ is critically 3-apex and not $\mathcal{R}_3$-independent. Hence, the theorem is tight in this sense.

\section{Global rigidity}
\label{sec:global}

A framework $(G,p)$ in $\mathbb{R}^d$ is \emph{globally $\mathcal{R}_d$-rigid} if every framework $(G,q)$ with the same edge lengths as $(G,p)$ can be obtained from $(G,p)$ by a composition of isometries. We need the following result. 

\begin{theorem}[Hendrickson \cite{Hen92}]\label{thm:hend}
Suppose $G$ is globally $\mathcal{R}_3$-rigid. Then $G-e$ is $\mathcal{R}_3$-rigid for all $e\in E$. 
\end{theorem}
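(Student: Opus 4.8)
The plan is to argue by contradiction, relying only on the fact recorded in the excerpt via \cite{gortler2010characterizing} that a globally $\mathcal{R}_3$-rigid graph is in particular $\mathcal{R}_3$-rigid. So suppose $G=(V,E)$ is globally $\mathcal{R}_3$-rigid (we may assume $|V|\geq 5$, the tiny cases being handled directly) but, for some edge $e=uv$, the graph $G-e$ is not $\mathcal{R}_3$-rigid; fix a generic $p\colon V\to\mathbb{R}^3$. Since $\rank R(G,p)=3|V|-6$ and deleting a single row drops the rank by at most $1$, non-rigidity of $G-e$ forces $\rank R(G-e,p)=3|V|-7$. Hence $(G-e,p)$ has exactly one internal degree of freedom, the row of $R(G,p)$ indexed by $e$ does \emph{not} lie in the row space of $R(G-e,p)$, and there is a nontrivial analytic flex $p_t$ of $G-e$ with $p_0=p$. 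Writing $g(t)=\|p_t(u)-p_t(v)\|^2$ we get $g'(0)\neq 0$: otherwise $\dot p_0$ would be a nontrivial infinitesimal flex of $G$, contradicting $\mathcal{R}_3$-rigidity of $G$. In particular $g$ is non-constant.

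The next step is to globalise this local flex. Let $\mathcal{C}=\{q\colon\|q(x)-q(y)\|=\|p(x)-p(y)\|\text{ for all }xy\in E(G-e)\}$, fix one vertex so that the real algebraic set $\mathcal{C}\cap\{q(v_1)=0\}$ becomes compact, and let $W$ be the connected component containing the corresponding translate of $p$; here $\dim W=4$. The polynomial $\psi(q)=\|q(u)-q(v)\|^2$ is non-constant on $W$ (constancy would make every point of $W$ a realisation of $G$ with the edge lengths of $(G,p)$, hence congruent to $p$, so $W$ would lie in a congruence orbit of dimension $3$, contradicting $\dim W=4$), so $\psi$ attains a maximum and a minimum on the compact set $W$; and since $p$ is a smooth point of $W$ at which $d\psi\neq 0$ (this is $g'(0)\neq 0$, as $\psi$ is rotation-invariant), the value $\psi(p)$ lies \emph{strictly} between that maximum and minimum. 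Running the intermediate value theorem along a path in $W$ from a minimiser to a maximiser produces $q^{\ast}\in W$ with $\psi(q^{\ast})=\psi(p)$; such a $q^{\ast}$ realises $G$ with exactly the edge lengths of $(G,p)$, so if $q^{\ast}$ can be chosen non-congruent to $p$ we contradict global $\mathcal{R}_3$-rigidity of $G$.

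The step I expect to be the genuine obstacle is precisely that last clause: ruling out that \emph{every} point of the level set $\psi^{-1}(\psi(p))\cap W$ is congruent to $p$. The intermediate value theorem alone does not settle this, and one must exploit the structure of the real points of the configuration variety --- this is the delicate core of Hendrickson's argument in \cite{Hen92}. The standard route is to pass to the quotient of $W$ by congruences, which (because $G-e$ has only one degree of freedom) is a compact connected real algebraic curve; such a curve does not embed in $\mathbb{R}$, so the descended non-constant polynomial $\psi$ is at least two-to-one over interior regular values, and genericity of $p$ guarantees that $\psi(p)$ is such a value, yielding the required second, non-congruent realisation. Carefully justifying this real-algebraic step is the bulk of the work; the rest is the bookkeeping above, and the whole argument is insensitive to the ambient dimension.
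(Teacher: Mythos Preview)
The paper does not give a proof of this statement: \Cref{thm:hend} is quoted from Hendrickson's paper \cite{Hen92} and used as a black box, with no argument supplied. So there is nothing in the paper to compare your attempt against.

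That said, your sketch is a faithful outline of Hendrickson's original proof. The skeleton is right: for generic $p$ the fibre of the edge-length map for $G-e$ through $p$, modulo congruences, is a compact smooth $1$-manifold (hence a disjoint union of circles), and the squared-length function $\psi$ of the missing edge descends to a non-constant map from the circle containing $[p]$ to $\mathbb{R}$; since $[p]$ is a regular point of this map (your $g'(0)\neq 0$), its value is taken at least twice, producing a second, non-congruent realisation of $G$ with the same edge lengths. Two points you should tighten if you want a complete proof rather than a sketch: first, compactness after pinning a single vertex uses connectedness of $G-e$, which you have not argued (it follows once you know $G$ is $2$-connected, itself an easy consequence of global rigidity, but it should be said); second, and more substantially, the assertion that the quotient is a smooth $1$-manifold is exactly where the work in \cite{Hen92} lives --- one needs that generic edge lengths are regular values of the rigidity map and that the congruence group acts freely near $p$, and you correctly flag this as the ``delicate core'' without actually carrying it out. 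As a proof \emph{proposal} this is fine and on the right track; as a proof it is still a sketch of someone else's theorem.
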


We begin this section by characterising global $\mathcal{R}_3$-rigidity for edge-apex graphs. We require the following theorem of Jord\'an and Tanigawa \cite{JT19}.

\begin{theorem}\label{thm:JT}
Let $G$ be a 4-connected graph obtained from a planar triangulation by adding some number of edges. Then $G$ is globally $\mathcal{R}_3$-rigid.    
\end{theorem}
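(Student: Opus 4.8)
The plan is to exhibit, for such a $G$, a \emph{full rank stress}: a generic framework of $G$ carrying a non-zero equilibrium stress whose stress matrix has rank $|V|-4$. By Connelly's sufficient condition for generic global rigidity this forces $G$ to be globally $\mathcal{R}_3$-rigid, and since having a full rank stress is a generic property no further genericity bookkeeping is needed. ($4$-connectivity, which is assumed, is also known to be necessary, so producing the stress is the whole content of the statement.) Write $G=T+F$ with $T$ a planar triangulation and $F\neq\emptyset$ the set of added edges; the case $F=\emptyset$ does not arise, since a triangulation has no redundant edge and hence is not globally $\mathcal{R}_3$-rigid by \Cref{thm:hend}.

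First I would reduce to $|F|=1$. Decompose $T=T_1\Delta\cdots\Delta T_k$ with structure tree $S(T)$ as in \Cref{structure}; each block $T_i$ has at least four vertices. Since $G$ is $4$-connected, no non-facial triangle of $T$ can be a $3$-cut of $G$, so every edge of $S(T)$ (which corresponds to such a triangle) is ``bridged'' by some $e\in F$, meaning that $e$ joins the two sides of that triangle; equivalently, the paths in $S(T)$ cut out by the elements of $F$ cover $S(T)$. For $e\in F$ let $H_e$ be the union of the blocks lying on $e$'s path together with the edge $e$; exactly as in the proof of \Cref{t:ear}, $H_e$ is a $4$-connected graph obtained from a triangulation by adding one edge, and $\bigcup_{e\in F}H_e=G$. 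Granting the case $|F|=1$, every $H_e$ is globally $\mathcal{R}_3$-rigid. Now adjoin the $H_e$ one at a time, in an order in which each new path meets the union of the earlier ones inside $S(T)$ (possible as $S(T)$ is a tree): each new $H_e$ then shares a whole block, hence at least four vertices, with the subgraph built so far, and the union of two globally $\mathcal{R}_3$-rigid graphs that overlap in at least four vertices is again globally $\mathcal{R}_3$-rigid (any realisation of the union restricts to a congruent copy of each piece, and four generically placed common points determine the connecting isometry). So $G$ is globally $\mathcal{R}_3$-rigid. If $S(T)$ is a single vertex, i.e.\ $T$ is itself $4$-connected, then for any $e\in F$ the graph $T+e$ is a $4$-connected spanning subgraph and the case $|F|=1$ applies.

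It remains to treat $G=T+uv$ with $T$ a triangulation and $G$ $4$-connected. By \Cref{thm:whiteleytriangplusedge}, $G$ is a rigid $\mathcal{R}_3$-circuit, so at a generic $p$ the equilibrium stress $\omega$ of $(G,p)$ is unique up to scaling and non-zero on every edge. Its stress matrix $\Omega$ always annihilates $\mathbf 1$ and the three coordinate functions of $p$, so $\rank\Omega\le|V|-4$, and it remains to prove equality, equivalently $\dim\ker\Omega\le 4$. I would argue by induction on $|V|$, with small base cases (for instance $G=K_5$, or the octahedron plus an edge) checked directly. For the inductive step, contract a suitable edge $xy$ of $T$ (every simple triangulation on at least five vertices has a contractible edge), chosen so that $G'=(T/xy)+uv$ is again $4$-connected and is a triangulation plus one edge; by induction $G'$ carries a full rank stress, and $G$ is recovered from $G'$ by the inverse vertex split, which can be arranged to be a single $1$-extension.

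The heart of the proof is to show that this structured $1$-extension raises the rank of the stress matrix by exactly one, from $|V(G')|-4$ to $|V(G)|-4$. This is precisely the step where one would like to invoke ``a $1$-extension preserves generic global rigidity in $\mathbb{R}^3$'', which is not known in this generality, so the rank count must be pushed through by hand, using genericity of $p$ together with $4$-connectivity of $G$ (which is what stops $\ker\Omega$ from gaining extra dimensions concentrated near the split vertices). An alternative for this core step would be to realise $T$ as a generic convex simplicial polytope in $\mathbb{R}^3$ (Steinitz) with $uv$ as an interior chord, and to show that the resulting framework, which is automatically generic, already has a stress matrix of corank $4$, exploiting convexity. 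Either way, this rank computation is the main obstacle; around it sit the more routine points of choosing the contracted edge so that $4$-connectivity survives and $uv$ is not created, and of handling the case where no admissible contraction avoids $\{u,v\}$ (where one instead performs the split at $u$ or $v$ directly). The rest of the argument is assembled from \Cref{structure}, \Cref{thm:whiteleytriangplusedge}, Connelly's criterion, and the elementary gluing lemma above.
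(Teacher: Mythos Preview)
The paper does not prove \Cref{thm:JT}. It is stated as a result of Jord\'an and Tanigawa and cited from \cite{JT19}; the paper simply uses it as a black box in the proof of \Cref{t:eagr}. So there is no ``paper's own proof'' to compare against, and your task was really to reprove an external result.

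As a proof, your proposal is an outline with a self-identified hole. The reduction to $|F|=1$ via the structure tree and the gluing of globally $\mathcal{R}_3$-rigid pieces over a common block is reasonable (though you should be careful to use shortest paths in $S(T)$, as in the proof of \Cref{t:ear}, to guarantee $4$-connectivity of each $H_e$; otherwise $K_4$ blocks at the ends of a path can create $3$-cuts). The real issue is exactly where you say it is: for $|F|=1$ you need that the stress matrix of the unique circuit stress has corank $4$, and your proposed induction via edge contraction/vertex split requires you to show that the split raises $\rank\Omega$ by one. You explicitly leave this undone, calling it ``the main obstacle'', and offer only a vague alternative via convex realisations. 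That is the entire content of the theorem; everything else is bookkeeping. In fact, the preservation of generic global rigidity under vertex splitting in $\mathbb{R}^3$ is precisely one of the hard results that \cite{JT19} establishes en route to this theorem, so your outline essentially reduces the statement to itself.

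If you want to turn this into a genuine proof, you would need either to supply the rank computation for the vertex split (which is non-trivial and is the substance of the Jord\'an--Tanigawa argument), or to find an independent route, for instance by directly constructing a super-stable stress from a convex polytopal realisation of $T$ together with the chord $uv$, and verifying corank $4$ from the geometry. Neither of these is present here.
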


\begin{theorem}\label{t:eagr}
Let $G=(V,E)$ be an edge-apex graph.
Then the following are equivalent:
\begin{enumerate}
    \item $G$ is globally $\mathcal{R}_3$-rigid;
    \item $G$ is a rigid $\mathcal{R}_3$-circuit; and
    \item $|E|=3|V|-5$ and $G$ is 4-connected. \label{it:eagr:4con}
\end{enumerate}
\end{theorem}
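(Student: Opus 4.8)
The plan is to prove the cyclic chain of implications $(i)\Rightarrow(iii)\Rightarrow(ii)\Rightarrow(i)$, using \Cref{t:ear} to control independence throughout. Since $G$ is edge-apex it is $(3,6)$-sparse once we know it is $\mathcal{R}_3$-independent, and conversely $\mathcal{R}_3$-independence of all proper subgraphs will come for free from \Cref{t:ear}.

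For $(i)\Rightarrow(iii)$: assume $G$ is globally $\mathcal{R}_3$-rigid. Global rigidity forces $\mathcal{R}_3$-rigidity, so $|E|\ge 3|V|-5$, i.e.\ $G$ is not $(3,6)$-sparse, so by \Cref{t:ear} $G$ is not planar; pick the apex edge $e$, so $G-e$ is planar. By Hendrickson's \Cref{thm:hend}, $G-f$ is $\mathcal{R}_3$-rigid for every $f\in E$, in particular $G-e$ is $\mathcal{R}_3$-rigid; by \Cref{thm:gluck} a rigid planar graph is a triangulation, so $G-e$ is a triangulation with $3|V|-6$ edges, hence $|E|=3|V|-5$. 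It remains to show $G$ is $4$-connected. A triangulation on $\ge 4$ vertices is $3$-connected, so $G=(G-e)+e$ is certainly $3$-connected; suppose for contradiction that $\{x,y\}$ is a $2$-separator of $G$. Standard Hendrickson-type arguments (as in \cite{JT19}) show that a globally rigid graph in $\mathbb{R}^3$ on $\ge 5$ vertices is $4$-connected: a $2$-separator $\{x,y\}$ yields two subframeworks sharing only $x,y$, and one can reflect one side in the line through $p(x),p(y)$ to produce an equivalent non-congruent framework, contradicting global rigidity. (If $G-e$ is $K_3$ then $|V|=3$ and $|E|=4$, impossible for a simple graph, so $|V|\ge 4$; and $(3,6)$-tightness plus $4$-connectivity forces $|V|\ge 5$.) This gives $(iii)$.

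For $(iii)\Rightarrow(ii)$: assume $|E|=3|V|-5$ and $G$ is $4$-connected. Since $|E|=3|V|-5>3|V|-6$, $G$ is not $(3,6)$-sparse, hence not planar by \Cref{t:ear}, so it has an apex edge $e$ with $G-e$ planar and $|E(G-e)|=3|V|-6$; being edge-maximal-sparse-planar, $G-e$ is a triangulation (a planar graph with $3|V|-6$ edges is a triangulation by Euler's formula). Thus $G=(G-e)+e$ is a $4$-connected graph obtained from a planar triangulation by adding one edge, so \Cref{thm:whiteleytriangplusedge} applies and $G$ is a rigid $\mathcal{R}_3$-circuit. This is exactly $(ii)$.

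For $(ii)\Rightarrow(i)$: assume $G$ is a rigid $\mathcal{R}_3$-circuit. Then $|E|=3|V|-5$ (a circuit has one more edge than its rank, and rigidity gives rank $3|V|-6$), so $G$ is not $(3,6)$-sparse and hence by \Cref{t:ear} not planar; let $e$ be an apex edge, so $G-e$ is planar with $3|V|-6$ edges and is therefore a triangulation. Now $G=(G-e)+e$; if $G$ is $4$-connected, \Cref{thm:JT} immediately gives global $\mathcal{R}_3$-rigidity. So it suffices to show that a rigid $\mathcal{R}_3$-circuit which is a triangulation plus one edge is $4$-connected. Suppose $\{x,y\}$ is a $2$-separator; then $G$ decomposes as $G_1\cup G_2$ with $V(G_1)\cap V(G_2)=\{x,y\}$, $E(G_1)\cap E(G_2)\subseteq\{xy\}$. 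Since $|V(G_1)\cap V(G_2)|=2\le d-1$, \Cref{lem:intbridge}\ref{it:intbridge:rank} shows that adding any edge between the two sides strictly increases the rank; combined with the fact that each side has at most $3|V(G_i)|-6$ edges (a subgraph of a triangulation-plus-at-most-one-edge), a counting argument shows $G$ cannot simultaneously be $(3,6)$-tight and a circuit — more precisely, in a $2$-separated graph every circuit is contained in one of the two sides together with the edge $xy$, which would make that side non-sparse of size $<|V|$, contradicting that $G-e$ is $(3,6)$-tight and $e$ is the only extra edge. Hence $G$ is $4$-connected and \Cref{thm:JT} finishes the proof.

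The main obstacle is the $4$-connectivity step, which appears in both $(i)\Rightarrow(iii)$ and inside $(ii)\Rightarrow(i)$: ruling out a $2$-separator in a $(3,6)$-tight triangulation-plus-one-edge. The cleanest route is to observe that a $2$-separator $\{x,y\}$ would force the unique circuit of $G$ to live in one side $G_i\cup\{xy\}$, contradicting that the single extra edge $e$ belongs to a $4$-connected ``core'' provided by the $\Delta$-join structure tree of $G-e$ (\Cref{structure}) exactly as in the proof of \Cref{t:ear}; alternatively one invokes the known fact that globally $\mathcal{R}_3$-rigid graphs on $\ge 5$ vertices are $4$-connected, but since that is used for $(i)\Rightarrow(iii)$ one still needs the direct combinatorial argument for $(ii)\Rightarrow(i)$.
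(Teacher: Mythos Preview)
Your cyclic order $(i)\Rightarrow(iii)\Rightarrow(ii)\Rightarrow(i)$ differs from the paper's $(i)\Rightarrow(ii)\Rightarrow(iii)\Rightarrow(i)$, and your $(iii)\Rightarrow(ii)$ via \Cref{thm:whiteleytriangplusedge} is a clean step the paper does not take directly. However, there is a genuine error and a genuine gap.

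\textbf{The error.} In your $(i)\Rightarrow(iii)$ you attempt to rule out a $2$-separator $\{x,y\}$ by ``reflecting one side in the line through $p(x),p(y)$''. In $\mathbb{R}^3$ reflection in a line is rotation by $\pi$ about that line, hence an isometry of the whole space; it does \emph{not} produce a non-congruent equivalent framework. Hendrickson's connectivity condition in $\mathbb{R}^3$ rules out \emph{$3$-separators} $\{x,y,z\}$ by reflecting in the \emph{plane} they span. Since you had already established $3$-connectivity of the triangulation $G-e$, there is no $2$-separator to discuss; the content is entirely in excluding $3$-cuts, and your argument as written does not do this.

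\textbf{The gap.} Your order forces you to prove $4$-connectivity twice, and in $(ii)\Rightarrow(i)$ you cannot appeal to Hendrickson (you do not yet know global rigidity). Your sketch there --- ``in a $2$-separated graph every circuit is contained in one side together with $xy$'' --- again addresses the wrong cut size and is not justified for $3$-cuts. The paper avoids this duplication by proving $(ii)\Rightarrow(iii)$ directly: if $G$ is a rigid $\mathcal{R}_3$-circuit and $S$ is a $3$-separator, then $S$ is a non-facial triangle of the triangulation $G-e$, so by \Cref{structure} one has $G-e=T_1\binom{\Delta}{f}T_2$ with the apex edge $e$ lying in (say) $T_2$; then every edge of $T_1$ outside $S$ lies in the planar, hence $\mathcal{R}_3$-independent, subgraph $T_1$, contradicting that every edge of a circuit is dependent. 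This is precisely the structure-tree argument you allude to at the end; making it the core of $(ii)\Rightarrow(iii)$ (rather than an afterthought inside $(ii)\Rightarrow(i)$) gives a complete proof with a single $4$-connectivity step.
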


\begin{proof}
By \Cref{thm:hend} if $G$ is globally $\mathcal{R}_3$-rigid then every edge is in a $\mathcal{R}_3$-circuit. Since edge-apex graphs have at most $3|V|-5$ edges, it follows from \Cref{t:ear} that $G$ must be a rigid $\mathcal{R}_3$-circuit.

Suppose $G$ is a rigid $\mathcal{R}_3$-circuit. Then $|E|=3|V|-5$. Suppose $G$ is not 4-connected, and consider a spanning triangulation $G-e$. Any 3-vertex-separation $S$ in $G$ induces a non-facial 3-cycle $f$ in $G-e$, so by \Cref{structure},  $G-e=T_1 \binom{\Delta}{f} T_2$, where $T_1, T_2$ are planar triangulations with common face $f$. Hence, both $T_1, T_2$ are $\mathcal{R}_3$-independent by \Cref{thm:gluck}. Since $G$ is not 4-connected, we may suppose that $T_1$ does not contain an end-vertex of $e$. 
Every edge in $T_1-S$ is $\mathcal{R}_3$-independent contradicting the assumption that $G$ is a $\mathcal{R}_3$-circuit.

If \ref{it:eagr:4con} holds then $G$ is obtained from a planar triangulation by adding an edge and hence the theorem follows from \Cref{thm:JT}.
\end{proof}

\begin{conjecture}
Let $G=(V,E)$ be 2-edge-apex. Then $G$ is globally $\mathcal{R}_3$-rigid if and only if $G$ is 4-connected and $G-e$ is $\mathcal{R}_3$-rigid for all edges $e \in E$.
\end{conjecture}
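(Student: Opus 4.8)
The forward implication is immediate: by Hendrickson's theorem in its original form \cite{Hen92}, a globally $\mathcal{R}_3$-rigid graph on at least five vertices is both $4$-connected and redundantly $\mathcal{R}_3$-rigid, the latter half being \Cref{thm:hend}, so $G$ is $4$-connected and $G-e$ is $\mathcal{R}_3$-rigid for all $e\in E$. The substance is the converse, and any argument must genuinely exploit the near-planar hypothesis, since the converse of Hendrickson's theorem fails for general graphs (witness $K_{5,5}$). The plan is to reduce to the $4$-block structure of a planar triangulation, build globally rigid pieces using \Cref{thm:JT}, and glue them.

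Assume $G$ is $2$-edge-apex, $4$-connected and redundantly $\mathcal{R}_3$-rigid. Redundant rigidity forces $|E|\geq 3|V|-5$ and $2$-edge-apicity forces $|E|\leq 3|V|-4$. Fix $e_1,e_2$ with $H:=G-\{e_1,e_2\}$ planar and extend $H$ to a plane triangulation by adding, according to the edge count, either no edge or a single edge $f$. Hence either (A) $|E|=3|V|-4$, $H$ is a triangulation $T$, and $G=T+e_1+e_2$; or (B) $|E|=3|V|-5$, $H+f=T$ is a triangulation, and $G=T-f+e_1+e_2$. In Case B one notes that $G$ is a rigid $\mathcal{R}_3$-circuit: it is rigid with a one-dimensional space of equilibrium stresses, which redundancy forces to be non-zero on every edge.

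For Case A, decompose $T=T_1\Delta\cdots\Delta T_m$ with structure tree $S(T)$ (\Cref{structure}). Every separating triangle of $T$ is an edge of $S(T)$ and a $3$-cut of $T$, so $4$-connectivity of $G=T+e_1+e_2$ forces each such triangle to be crossed by $e_1$ or by $e_2$. Let $P_i$ be the shortest path in $S(T)$ between a $4$-block containing one endpoint of $e_i$ and one containing the other, and set $A_i:=\bigcup_{T_j\in P_i}T_j$. Then $A_i$ is a plane triangulation whose separating triangles are exactly the joins along $P_i$, all crossed by $e_i$, so $A_i+e_i$ is $4$-connected; being a triangulation-plus-an-edge it is globally $\mathcal{R}_3$-rigid by \Cref{thm:JT}. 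Since $E(P_1)\cup E(P_2)=E(S(T))$ and $S(T)$ is a connected tree, $P_1$ and $P_2$ share a vertex, so $A_1\cap A_2$ contains a full $4$-block, a sub-triangulation on at least four vertices and hence $\mathcal{R}_3$-rigid; moreover $A_1\cup A_2=T$, so $(A_1+e_1)\cup(A_2+e_2)=G$. One then applies a gluing principle for generic global rigidity — the union of two globally $\mathcal{R}_3$-rigid graphs whose intersection is $\mathcal{R}_3$-rigid on at least four vertices is again globally $\mathcal{R}_3$-rigid (standard for dimension $\leq 2$, and, we expect, provable in general by superposing maximal-rank equilibrium stresses via the criterion of \cite{gortler2010characterizing}). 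It is essential that the overlap have at least four vertices: gluing along a common triangle fails in $\mathbb{R}^3$ (as $K_5$ minus an edge shows), which is why the blocks $A_1,A_2$ rather than $e_1,e_2$ alone must drive the argument. When one of $T+e_1,T+e_2$ happens already to be $4$-connected there is a quicker finish, since it is then a rigid $\mathcal{R}_3$-circuit by \Cref{thm:whiteleytriangplusedge} and globally rigid by \Cref{thm:JT}, hence carries a maximal-rank stress that extends to $G$ and makes $G$ globally rigid; the gluing step covers the configurations in which $e_1$ and $e_2$ genuinely share the separating triangles.

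The main obstacle is Case B, the rigid-$\mathcal{R}_3$-circuit case $G=T-f+e_1+e_2$. The Case A reduction does not transfer: although $G+f=T+e_1+e_2$ is globally rigid by Case A, deleting the triangulation edge $f$ from a globally rigid graph need not preserve global rigidity, and $G$ need not be edge-apex — one can place $e_1,e_2$ so that they cross separating triangles far from $f$ — so \Cref{t:eagr} does not apply directly. The natural route is to rerun the $4$-block analysis of $T$ while bookkeeping $f$: show that $f$ lies in a block that is absorbed into some globally rigid piece $A_i+e_i$ in a way that keeps that piece globally rigid after the deletion of $f$, or, equivalently, prove directly that the unique equilibrium stress of the circuit $G$ attains rank $|V|-4$ at a generic realisation by locating it within the block structure. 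Carrying that out, together with pinning down the exact form of the global-rigidity gluing lemma invoked in Case A, is where the real difficulty lies.
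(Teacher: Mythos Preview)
The statement is a \emph{conjecture}; the paper does not prove it, only offers the discussion immediately following it. That discussion splits into the same two cases you identify, but handles Case~A far more simply than you do.

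In Case~A ($|E|=3|V|-4$) you have $G=T+e_1+e_2$ with $T$ a planar triangulation and $G$ itself $4$-connected. \Cref{thm:JT} applies to $G$ directly: it is literally a $4$-connected graph obtained from a planar triangulation by adding some edges. There is no need for the $4$-block decomposition, the paths $P_i$, or the gluing step. Your route instead passes through a ``gluing principle for generic global rigidity'' in $\mathbb{R}^3$ which you acknowledge is not in the literature (``we expect, provable in general''); so your Case~A argument both takes a detour and rests on an unestablished lemma, when a one-line application of \Cref{thm:JT} suffices. This is the genuine gap in your write-up of Case~A.

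For Case~B ($|E|=3|V|-5$) your analysis matches the paper's: $G$ is forced to be a rigid $\mathcal{R}_3$-circuit, but deducing global rigidity from this is the obstruction. The paper says only ``It is non-trivial though to deduce global rigidity for all such graphs'' and stops; your sketch of possible attacks (tracking $f$ through the block structure, or showing the unique stress has full rank) is a reasonable outline of what would need to be done, but neither you nor the paper actually does it. So Case~B remains the open content of the conjecture.
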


There are two cases to prove, either $|E|=3|V|-4$ or $|E|=3|V|-5$. In the first case there exists $e,f$ such that $G-\{e,f\}$ is a triangulation in which case the conjecture follows from Jord\'an-Tanigawa. In the second, $G-\{e,f\}$ is a triangulation minus 1 brace. Hence, \Cref{t:ear} implies that every $\mathcal{R}_3$-circuit in $G$ is $\mathcal{R}_3$-rigid. Since $|E|=3|V|-5$, $G$ contains a unique $\mathcal{R}_3$-circuit.
As $G-e$ is $\mathcal{R}_3$-rigid for every edge $e \in E$, $G$ is a $\mathcal{R}_3$-circuit. It is non-trivial though to deduce global rigidity for all such graphs.

As mentioned, it is challenging to understand $\mathcal{R}_3$-rigidity for apex graphs. Hence, it seems likely to be difficult to understand global rigidity. However, we observe the following special case.

\begin{proposition}
Let $G+v$ be an apex graph on at least 6 vertices such that $G$ is a triangulation. Then $G+v$ is globally $\mathcal{R}_3$-rigid if and only if $G+v$ is 4-connected and the subgraph of $G$ induced by the neighbour set of $v$ is not isomorphic to $K_4$.    
\end{proposition}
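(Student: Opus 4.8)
The plan is to read necessity off Hendrickson's theorem, and to prove sufficiency by exhibiting a globally $\mathcal{R}_3$-rigid spanning subgraph of $G+v$: when $G$ is $4$-connected this is a single $1$-extension of a triangulation-plus-an-edge, and in general it comes from an induction on $|V(G)|$.

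\emph{Necessity.} If $G+v$ is globally $\mathcal{R}_3$-rigid then it is redundantly $\mathcal{R}_3$-rigid by \Cref{thm:hend} and $4$-connected by Hendrickson's companion connectivity result \cite{Hen92}. It remains to exclude $G[N(v)]\cong K_4$. Suppose $N(v)=\{a,b,c,d\}$ induces a $K_4$ in $G$. Since $G$ is a $3$-connected planar triangulation on at least five vertices, in its essentially unique embedding this $K_4$ bounds four triangles, exactly one of which contains all remaining vertices; the vertex of $\{a,b,c,d\}$ off that triangle, say $d$, therefore has $N_G(d)=\{a,b,c\}$, so $\{a,b,c\}$ separates $\{d,v\}$ from the non-empty remainder of $G+v$, contradicting $4$-connectivity.

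\emph{Sufficiency.} Assume $G+v$ is $4$-connected and $G[N(v)]\not\cong K_4$. First $d_G(v)\ge 4$, as otherwise $N(v)$ (or any $3$-superset) is a cut of size at most $3$ separating $v$ from the $\ge n-4\ge 2$ other vertices. Since adding edges preserves global $\mathcal{R}_3$-rigidity, it is enough to find a globally $\mathcal{R}_3$-rigid spanning subgraph of $G+v$. We rely on \Cref{thm:JT} (a $4$-connected graph obtained from a planar triangulation by adding edges is globally $\mathcal{R}_3$-rigid) and on the standard fact \cite{JacksonJordan} that a $1$-extension of a globally $\mathcal{R}_3$-rigid graph on at least five vertices is again globally $\mathcal{R}_3$-rigid. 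As $|N(v)|\ge 4$, $G[N(v)]$ is planar, and $G[N(v)]\not\cong K_4$ (while a planar graph on five or more vertices is not complete), some pair $x,y\in N(v)$ satisfies $xy\notin E(G)$. If $G$ is $4$-connected then $G+xy$ is obtained from the planar triangulation $G$ by adding one edge and is still $4$-connected, hence globally $\mathcal{R}_3$-rigid (indeed a rigid $\mathcal{R}_3$-circuit, by \Cref{thm:whiteleytriangplusedge}); re-inserting $v$ with degree $4$ and neighbours $\{x,y\}$ together with two more vertices of $N(v)$ presents a spanning subgraph of $G+v$ as a $1$-extension of $G+xy$, and we are done. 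If instead $G$ has a separating triangle, take one, $\tau$, whose interior contains no other separating triangle, and split $G=G_1\Delta G_2$ along it, so $G_2$ (the closed interior) is $K_4$ or a $4$-connected triangulation (\Cref{structure}); $4$-connectivity of $G+v$ forces $N(v)$ to meet both $V(G_1)\setminus\tau$ and $V(G_2)\setminus\tau$. If $G_2=K_4$, its lone interior vertex $q_0$ has $\deg_G(q_0)=3$ and lies in $N(v)$, and either $\tau\not\subseteq N(v)$, in which case a $1$-reduction of $G+v$ at $q_0$ along a non-neighbour of $v$ in $\tau$ produces the smaller instance $(G-q_0)+v$ (with $G-q_0$ a triangulation on one fewer vertex, and $G+v$ recovered by a $1$-extension), or $\tau\subseteq N(v)$, which forces $d_G(v)\ge 5$ (else $\tau$ is itself a $3$-cut of $G+v$) and makes $\tau\cup\{v\}$ a $K_4$ over which $q_0$ sits, so $G+v=\big((G-q_0)+v\big)\cup K_5$ glued along that $K_4$, a union of globally $\mathcal{R}_3$-rigid graphs meeting in four vertices in general position, hence globally $\mathcal{R}_3$-rigid. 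If $G_2\ne K_4$, contract a contractible interior edge of $G_2$: its endpoints have two common neighbours in $G+v$, so this is the reverse of a vertex split, and it suffices to handle the (one vertex smaller) contracted graph. In every branch $|V(G)|$ drops, and the recursion terminates with $G$ $4$-connected (previous case) or with $G+v=K_5$, both globally $\mathcal{R}_3$-rigid.

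The hard part, which is the inductive bookkeeping, is to check that each reduction preserves $4$-connectivity, keeps $\deg(v)\ge 4$, keeps $v$'s neighbourhood away from inducing a $K_4$, and keeps at least six vertices until $K_5$ is reached; the $K_4$-avoidance in particular needs a short separate argument in the degenerate subcases. A route that trades this case analysis for linear algebra is Connelly's stress-matrix criterion \cite{gortler2010characterizing}: realise $G$ as the edge graph of a simplicial convex $3$-polytope and place $v$ so that the (generic) equilibrium stress of $G+v$ has stress matrix of rank $n-4$; there the difficulty migrates into controlling that rank, and that is again exactly where $4$-connectivity and the $K_4$ exclusion must enter.
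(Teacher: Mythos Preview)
Your necessity argument contains an error: the claim that ``exactly one of [the four triangles] contains all remaining vertices'' is false. Take $K_4$ on $\{a,b,c,d\}$ with $d$ drawn inside triangle $abc$, add a vertex $e$ in face $abd$ adjacent to $a,b,d$, and a vertex $f$ in face $acd$ adjacent to $a,c,d$; this is a $3$-connected triangulation on six vertices with two of the four regions occupied, and no vertex of $\{a,b,c,d\}$ has degree~$3$. The conclusion you want is still true, and the fix is immediate: \emph{some} region bounded by, say, $abd$ contains a vertex $w$, and then $\{a,b,d\}$ separates $w$ from $\{c,v\}$ in $G+v$. The paper instead argues via redundant rigidity: since $G$ is a triangulation (hence $\mathcal{R}_3$-independent and rigid) and $d(v)=4$, the graph $G+v$ has exactly $3|V(G+v)|-5$ edges and thus a unique $\mathcal{R}_3$-circuit, namely the $K_5$ on $N(v)\cup\{v\}$; any edge outside that $K_5$ is then a coloop and its removal destroys rigidity, contradicting \Cref{thm:hend}.

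Your sufficiency argument is workable in spirit but far heavier than needed, and you correctly flag that the inductive bookkeeping (preserving $4$-connectivity, $\deg(v)\ge 4$, and $K_4$-avoidance under contraction) is not done; note in particular that your ``reverse vertex split'' step can fail when both endpoints of the contracted edge lie in $N(v)$, since $v$ then becomes a third common neighbour. The paper bypasses all of this with a single application of Tanigawa's sufficient condition \cite[Lemma~4.1]{Tsuff}: set $G^*=G+K(N(v))$, observe that any $3$-separator $S$ of $G^*$ would also separate $G+v$ (the clique on $N(v)\setminus S$ in $G^*$ plays exactly the role of $v$ in $G+v$), so $G^*$ is $4$-connected and globally $\mathcal{R}_3$-rigid by \Cref{thm:JT}; since $G+v$ is $\mathcal{R}_3$-rigid and $(G+v)-v+K(N(v))=G^*$ is globally rigid, Tanigawa's lemma gives global rigidity of $G+v$ directly. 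This removes any need to distinguish whether $G$ itself is $4$-connected.
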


\begin{proof}
The necessity of 4-connectivity is clear. If $G[N(v)]=K_4$ then $v$ has degree 4. Since $G$ is a triangulation, $G+v$ contains a unique $\mathcal{R}_3$-circuit which is isomorphic to $K_5$. Since $G$ has at least 6 vertices it follows from \Cref{thm:hend} that $G+v$ is not globally $\mathcal{R}_3$-rigid, a contradiction.

Conversely, by the hypotheses we may suppose that there is a non-edge among the neighbours of $v$. Since $G+v$ is 4-connected, the graph obtained from $G$ by adding all possible edges among the neighbours of $v$ is 4-connected. These two facts imply global $\mathcal{R}_3$-rigidity by \cite{JT19}. Since $G$ is a triangulation it is $\mathcal{R}_3$-rigid so we may apply \cite[Lemma 4.1]{Tsuff} to deduce the global $\mathcal{R}_3$-rigidity of $G+v$.
\end{proof}

The next result follows easily from \Cref{cor:kuratowksi}.

\begin{lemma}
The only critically edge-apex graph that is globally $\mathcal{R}_3$-rigid is $K_5$.    
\end{lemma}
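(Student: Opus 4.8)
The plan is to combine \Cref{cor:kuratowksi}, Hendrickson's necessary condition (\Cref{thm:hend}) and the edge-apex characterisation of global $\mathcal{R}_3$-rigidity (\Cref{t:eagr}). First I would record an elementary edge bound: if a graph $G$ on $n\ge 5$ vertices is globally $\mathcal{R}_3$-rigid then, by \Cref{thm:hend}, $G-e$ is $\mathcal{R}_3$-rigid for every $e\in E$, so $r_3(G)=3n-6$; moreover $|E|\ge 3n-5$, since if $|E|=3n-6$ then $G$ would itself be a basis of $\mathcal{R}_3(G)$ and we would get $r_3(G-e)=3n-7$ for every $e$, contradicting $\mathcal{R}_3$-rigidity of $G-e$. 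In particular a planar critically edge-apex graph on at least five vertices has at most $3n-6<3n-5$ edges, so it is not globally $\mathcal{R}_3$-rigid; this reduces the problem to the non-planar case.

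For the non-planar case I would invoke \Cref{cor:kuratowksi}: $G$ is a subdivision of $H$ for some $H\in\{K_5,K_{3,3}\}$, say obtained from $H$ by inserting $j\ge 0$ new (degree-two) vertices in total, so that $|V|=|V(H)|+j$ and $|E|=|E(H)|+j$. Substituting into the inequality $|E|\ge 3|V|-5$ above: for $H=K_{3,3}$ this demands $9+j\ge 13+3j$, which is impossible; for $H=K_5$ it demands $10+j\ge 10+3j$, forcing $j=0$ and hence $G=K_5$. It then remains only to check that $K_5$ is indeed globally $\mathcal{R}_3$-rigid, and this is immediate from \Cref{t:eagr}: $K_5$ is edge-apex (as $K_5-e$ is planar), it has $|E|=10=3\cdot 5-5$ edges, and it is $4$-connected, so all three equivalent conditions there hold.

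I do not expect any genuinely hard step: the result really is a short consequence of the three ingredients above. The only points meriting care are the elementary count that a subdivision of $H$ with $j$ extra vertices has exactly $|E(H)|+j$ edges, and the derivation of the bound $|E|\ge 3|V|-5$ from redundant $\mathcal{R}_3$-rigidity, which uses $|V|\ge 5$ so that the rigidity-matroid rank is at most $3|V|-6$.
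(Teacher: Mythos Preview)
Your proposal is correct and follows the same core route as the paper, which simply records that the result ``follows easily from \Cref{cor:kuratowksi}''; you supply the edge-counting details the paper leaves implicit, and even add an explicit treatment of the planar case via the redundant-rigidity bound $|E|\ge 3|V|-5$. Both treatments tacitly ignore graphs on at most four vertices, which is harmless since the intended content is the non-planar case.
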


\begin{lemma}\label{lem:critwheel}
Let $G$ be critically apex. Then $G$ is globally $\mathcal{R}_3$-rigid if and only if $G$ is the cone of a wheel on at least 4 vertices.
\end{lemma}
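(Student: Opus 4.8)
The plan is to prove the two implications separately, using Hendrickson's theorem (\Cref{thm:hend}) and the reconstruction results (\Cref{thm:reconstruct}) for the ``only if'' direction, and the Jord\'an--Tanigawa theorem (\Cref{thm:JT}) for the ``if'' direction. I will work throughout with $G$ non-planar and on at least five vertices: a planar critically apex graph on at least five vertices is $\mathcal{R}_3$-independent by \Cref{thm:gluck} and hence not redundantly $\mathcal{R}_3$-rigid, so not globally $\mathcal{R}_3$-rigid, while the cone of a wheel always contains a $K_5$-minor; the handful of small/degenerate cases ($K_3$, $K_4$) are read off directly.

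First I would handle the ``if'' direction. A wheel on $m+1 \ge 4$ vertices is a hub joined to a cycle $C_m$ with $m \ge 3$, so its cone $G$ is obtained by further joining two vertices $a,b$ to everything (with $ab \in E$); that is, $G = K_2 + C_m$ is the join of an edge with $C_m$. I would observe that $G - ab$ is the $m$-gonal bipyramid (place $C_m$ as a convex polygon with $a$ inside and $b$ outside), which is a planar triangulation: it has $m+2$ vertices and $3m = 3(m+2)-6$ edges and all its faces $av_iv_{i+1}$, $bv_iv_{i+1}$ are triangles. I would then check that $G$ is $4$-connected: removing $\{a,b\}$ leaves $C_m$; removing $a$ (or $b$) together with one or two rim vertices leaves a fan or union of paths still joined to the other pole; and removing three rim vertices leaves everything joined to both poles. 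Thus $G$ is obtained from a planar triangulation by adding the single edge $ab$ and is $4$-connected, so \Cref{thm:JT} gives that $G$ is globally $\mathcal{R}_3$-rigid. (For the converse one also records that each such $G$ is genuinely critically apex: deleting the cone vertex gives a wheel, deleting the hub gives a wheel, and deleting a rim vertex gives the cone of a fan, all planar.)

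For the ``only if'' direction, let $G$ be critically apex and globally $\mathcal{R}_3$-rigid. By \Cref{thm:hend}, $G-e$ is $\mathcal{R}_3$-rigid for every $e \in E$, so $G$ is redundantly $\mathcal{R}_3$-rigid; in particular $G$ has minimum degree at least $4$ (a vertex of degree at most $3$ would drop the rank after deleting one of its edges) and $G$ is not $\mathcal{R}_3$-independent. By \Cref{thm:reconstruct}\ref{it:degree} the minimum degree of $G$ is at most $5$, and \Cref{thm:reconstruct}\ref{it:reconstruct:mindeg5} rules out minimum degree $5$ since $G$ is non-planar; hence $G$ has minimum degree exactly $4$ and, by \Cref{thm:reconstruct}\ref{it:reconstruct:mindeg4}, $G$ is one of the graphs of \Cref{fig:fiorini}. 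The three sporadic graphs there are $\mathcal{R}_3$-independent (as recorded in the proof of \Cref{thm:critapex}) and hence not redundantly rigid; a graph in the $4$-regular family on $n$ vertices has $2n$ edges, whereas a redundantly $\mathcal{R}_3$-rigid graph on $n \ge 5$ vertices has at least $3n-5$ edges (it is $\mathcal{R}_3$-rigid, and with exactly $3n-6$ edges it would be minimally rigid), forcing $n \le 5$ and so $G = K_5$, which is the cone of the wheel on $4$ vertices. The only remaining case is that $G$ belongs to the cone-of-a-wheel family of \Cref{fig:fiorini}, as required.

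The step I expect to be the main obstacle is the ``if'' direction --- specifically, presenting the cone of a wheel in the exact form demanded by \Cref{thm:JT} (a $4$-connected graph obtained from a planar triangulation by adding edges). The identification of $G - ab$ with the bipyramid and the $4$-connectivity check are elementary but do require care, and there is a subtlety at the bottom of the range ($m=3$, where $G=K_5$) that must be seen to be covered by the same argument. If one preferred, a coning theorem for global rigidity would let one bootstrap from the classical global $\mathcal{R}_2$-rigidity of wheels instead, but routing through \Cref{thm:JT} keeps the argument inside the toolkit already assembled.
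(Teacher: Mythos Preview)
Your proof is correct, and the ``only if'' direction is essentially the paper's argument: both invoke \Cref{thm:hend} to force redundant rigidity (hence minimum degree at least $4$), use \Cref{thm:reconstruct} to land in \Cref{fig:fiorini}, and then discard the three sporadic graphs as $\mathcal{R}_3$-independent and the $4$-regular family by an edge count. The paper phrases that last elimination as ``$(3,6)$-sparse'', you as $2n \ge 3n-5 \Rightarrow n \le 5$; these are the same observation.

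The ``if'' direction is where you genuinely diverge. The paper does not argue it at all: it simply cites the global $\mathcal{R}_3$-rigidity of cones of wheels as well known, pointing to \cite{connelly2010global,JacksonJordan}. You instead recognise the cone of $W_m$ as the $m$-gonal bipyramid plus the pole-to-pole edge, check $4$-connectivity directly, and invoke \Cref{thm:JT}. This buys you a self-contained proof using only machinery already assembled in the paper, at the cost of a short connectivity verification; the paper's route is a one-line citation but leans on results not otherwise stated here. Your alternative suggestion of going through a coning theorem for global rigidity would also work and is closer in spirit to what the cited references actually do.
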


\begin{proof}
Let $G$ be critically apex and globally $\mathcal{R}_3$-rigid. \Cref{thm:reconstruct}\ref{it:degree} implies that $G$ has minimum degree at most 5.
Since $G$ is globally $\mathcal{R}_3$-rigid the minimum degree is at least 4. 
\Cref{thm:reconstruct}\ref{it:reconstruct:mindeg5} and \ref{it:reconstruct:mindeg4} now imply that the minimum degree is precisely 4 and that the only possibilities are the graphs depicted in the two infinite families of \Cref{fig:fiorini} (the first 3 examples in the figure contradict \Cref{thm:hend}). The first family is the cone of a wheel on at least 4 vertices and the second family are $(3,6)$-sparse and hence not globally $\mathcal{R}_3$-rigid by \Cref{thm:hend}. 

The converse is to prove that the cone of a wheel on at least 4 vertices is globally $\mathcal{R}_3$-rigid. This is well known, see \cite{connelly2010global,JacksonJordan}.
\end{proof}

\section{Maximum likelihood thresholds}
\label{sec:mlt}

A fundamental question in statistical inference, informally, asks: for a fixed graph $G$, how many datapoints
are needed for the maximum likelihood estimator of the associated Gaussian graphical model to exist almost surely?  
This minimum number of 
datapoints is called the \emph{maximum likelihood threshold (MLT) of $G$}, 
which we denote $\mlt(G)$. A basic result gives the precise number for complete graphs.

\begin{lemma}[\cite{buhl1993existence}]\label{lem:complete}
    $\mlt(K_n) = n$ for all positive integers $n$.
\end{lemma}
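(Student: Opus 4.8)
The plan is to prove $\mlt(K_n)=n$ by establishing the two inequalities $\mlt(K_n)\le n$ and $\mlt(K_n)\ge n$ separately, working directly from the definition of the maximum likelihood threshold for a Gaussian graphical model. Recall that for the Gaussian graphical model on a graph $G=(V,E)$ with $|V|=n$, a sample of $N$ data points in $\mathbb{R}^n$ produces a sample covariance matrix $S$, and the MLE of the concentration matrix exists if and only if there is a positive definite matrix $\Sigma$ whose inverse has zeros in the off-diagonal positions corresponding to non-edges of $G$, and which agrees with $S$ in the diagonal entries and in the entries indexed by edges of $G$. For the complete graph $K_n$ there are no non-edges, so this condition simply says that $S$ itself is positive definite. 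The threshold $\mlt(K_n)$ is then the smallest $N$ such that, for generic (equivalently, almost every) choice of $N$ sample points, $S$ is positive definite.

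First I would prove $\mlt(K_n)\le n$. Given $N$ centred samples $x_1,\dots,x_N\in\mathbb{R}^n$, the sample covariance matrix is (up to a positive scalar) $S=\sum_{i=1}^N x_ix_i^{\top}=XX^{\top}$ where $X$ is the $n\times N$ matrix with columns $x_i$. This is positive definite if and only if $X$ has rank $n$, i.e.\ the columns span $\mathbb{R}^n$. If $N\ge n$, then for a generic choice of sample points the $n\times N$ matrix $X$ has full row rank $n$ (full rank is a Zariski-open condition that is nonempty), so $S$ is positive definite almost surely; hence the MLE exists almost surely and $\mlt(K_n)\le n$. (One should be slightly careful about the centring — with $N$ samples one typically subtracts the sample mean, costing one degree of freedom — but the standard convention in this literature, consistent with \cite{buhl1993existence}, handles this so that the answer is exactly $n$; I would simply cite this normalisation.)

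Next I would prove $\mlt(K_n)\ge n$, i.e.\ that $N=n-1$ samples do not suffice. With $N=n-1$, the matrix $X$ is $n\times(n-1)$, so $S=XX^{\top}$ has rank at most $n-1<n$ for \emph{every} choice of sample points; thus $S$ is singular, hence not positive definite, and the MLE fails to exist — not just generically but always. Therefore $\mlt(K_n)>n-1$, giving $\mlt(K_n)\ge n$. Combining the two inequalities yields $\mlt(K_n)=n$.

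The main obstacle here is not any deep argument — the linear-algebra content is elementary — but rather pinning down the precise convention for the number of degrees of freedom (whether one centres the data and loses a degree of freedom, whether $S$ is normalised by $N$ or $N-1$), since an off-by-one in the setup would change the stated value. In the write-up I would either adopt the uncentred convention explicitly or, more cleanly, just invoke \Cref{lem:complete} as the cited result of \cite{buhl1993existence} and present the rank argument above as the intuition, so that the theorem statement is self-consistent with the rest of the paper's use of $\mlt$.
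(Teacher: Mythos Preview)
The paper does not give its own proof of this lemma: it is stated with a citation to \cite{buhl1993existence} and used as a black box. Your rank argument for $XX^\top$ is precisely the standard proof (and essentially what Buhl does), so there is nothing to compare; your write-up already goes further than the paper, which is content simply to cite the result. Your caveat about the centring convention is well taken but does not reflect any defect in the paper, since the paper never attempts the computation.
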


As MLT is a monotonically increasing graph property (as in it never decreases when an edge/vertex is added),
\Cref{lem:complete} gives a weak upper bound for the MLT of any graph.
Uhler \cite{uhler2012geometry} introduced a different graph parameter called the \emph{generic completion rank (GCR)}, denoted by $\gcr(G)$, and showed that it is an upper bound for the maximum likelihood threshold. 

\begin{theorem}[\cite{uhler2012geometry}]\label{thm:uhler}
For any graph $G$, $\mlt(G) \leq \gcr(G).$    
\end{theorem}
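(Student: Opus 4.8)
The plan is to combine the classical algebraic characterisation of when the Gaussian MLE exists with a dominance statement for coordinate projections of symmetric determinantal varieties. Write $m=|V|$ and regard a \emph{partial matrix with pattern $G$} as a point of $\mathbb{R}^{E\cup V}$, the off-diagonal entries indexed by $E$ and the diagonal entries by $V$; let $\pi_G$ be the linear projection sending a symmetric $m\times m$ matrix to its entries in this pattern. First I would recall the standard fact \cite{buhl1993existence,uhler2012geometry} that, for data with sample covariance matrix $S$, the MLE of the Gaussian graphical model on $G$ exists precisely when $\pi_G(S)$ admits a positive definite completion, equivalently when $\pi_G(S)$ lies in the interior of $\pi_G(\mathcal{P}_m)$, where $\mathcal{P}_m$ denotes the (closed, full-dimensional) positive semidefinite cone and $\mathcal{P}_m^{\le r}$ its slice of matrices of rank at most $r$. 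For a sample of size $n\le m$ the sample covariance $S=\tfrac1n YY^{\top}$ runs over all of $\mathcal{P}_m^{\le n}$ as $Y$ varies, and for generic data $\pi_G(S)$ is a generic point of the semialgebraic set $\pi_G(\mathcal{P}_m^{\le n})$. Hence, if $\pi_G(\mathcal{P}_m^{\le n})$ is \emph{full-dimensional} in $\mathbb{R}^{E\cup V}$, then a generic such point lies in its (Euclidean) interior, which is contained in $\operatorname{int}\pi_G(\mathcal{P}_m)$, so the MLE exists for generic data and $\mlt(G)\le n$. It therefore suffices to prove that $\pi_G(\mathcal{P}_m^{\le r})$ is full-dimensional when $r=\gcr(G)$ (the case $r=m$ being immediate, as then $\mathcal{P}_m^{\le r}$ is the whole cone and $m$ generic data points already yield a positive definite $S$).

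Next recall that, by definition of generic completion rank, $r=\gcr(G)$ means exactly that $\pi_G$ restricted to the determinantal variety $\mathcal{S}_m^{\le r}$ of symmetric $m\times m$ matrices of rank at most $r$ is \emph{dominant}, i.e.\ has Zariski-dense --- hence full-dimensional --- image in $\mathbb{R}^{E\cup V}$. The task is to transfer this from symmetric matrices to the positive semidefinite cone, and the device I would use is the elementary dimension identity
\[ \dim\mathcal{S}_m^{\le r}=\binom{m+1}{2}-\binom{m-r+1}{2}=mr-\binom r2=\dim\mathcal{P}_m^{\le r}. \]
Thus $\mathcal{P}_m^{\le r}$ is a full-dimensional semialgebraic subset of the real locus of $\mathcal{S}_m^{\le r}$, and its rank-exactly-$r$ part is a smooth manifold lying in the smooth locus of $\mathcal{S}_m^{\le r}$. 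Since $\pi_G|_{\mathcal{S}_m^{\le r}}$ is dominant, its differential attains the maximal rank $|E|+|V|$ on a dense open subset of the smooth locus; being dense, this subset meets the full-dimensional set of rank-$r$ positive semidefinite matrices, and at such a matrix $\pi_G$ is a submersion, so its image contains a Euclidean-open set. Hence $\pi_G(\mathcal{P}_m^{\le r})$ is full-dimensional, which completes the proof.

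The step I expect to be the genuine obstacle is exactly this passage from the symmetric (equivalently complex) matrix-completion statement that defines $\gcr(G)$ to the real positive semidefinite statement that actually governs the existence of the MLE; the dimension coincidence above is what lets one leverage dominance over all symmetric matrices to obtain a full-dimensional image over the much smaller positive semidefinite cone, and any proof must cross this bridge in some form. A second, more routine, ingredient that I would quote rather than reprove \cite{buhl1993existence,gross2018maximum} is that existence of the MLE for generic data is a zero--one phenomenon: the relevant condition on the data is semialgebraic, so that exhibiting a single full-dimensional family of data configurations for which the MLE exists already forces $\mlt(G)\le n$.
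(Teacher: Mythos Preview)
The paper does not give its own proof of this theorem; it is quoted from \cite{uhler2012geometry} as a background result. Your sketch is a reasonable reconstruction of Uhler's argument, so there is nothing in the paper to compare it against.

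Two remarks are nevertheless worth making. First, a definitional point: the paper adopts as its working definition the Gross--Sullivant characterisation $\gcr(G)=d+1$, where $d$ is the least dimension in which $G$ is $\mathcal{R}_d$-independent, rather than Uhler's original matrix-completion definition that you use. These are equivalent by \cite{gross2018maximum}, so either route is acceptable, but in the paper's conventions your first step would read ``$G$ is $\mathcal{R}_r$-independent, hence the projection $\pi_G$ restricted to $\mathcal{S}_m^{\le r}$ is dominant'', invoking Gross--Sullivant explicitly.

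Second, one implication you treat as immediate deserves a line of justification: ``a generic point of $\pi_G(\mathcal{P}_m^{\le n})$ lies in $\operatorname{int}\pi_G(\mathcal{P}_m)$, so the MLE exists'' uses the identity $\operatorname{int}\pi_G(\mathcal{P}_m)=\pi_G(\mathcal{P}_m^{>0})$. This holds because $\pi_G(\mathcal{P}_m)$ is convex and full-dimensional and $\pi_G$ is an open linear map, but it is the step that converts ``interior of the image'' into ``has a positive definite completion'', so it should not be left implicit. Your transfer from dominance on $\mathcal{S}_m^{\le r}$ to full-dimensionality of $\pi_G(\mathcal{P}_m^{\le r})$ via the dimension coincidence and a submersion argument is correct; the underlying fact making it work is that $\mathcal{P}_m^{\le r}$ is Zariski-dense in $\mathcal{S}_m^{\le r}$ (the parametrisation $B\mapsto BB^{\top}$ is surjective over $\mathbb{C}$ and its real points are Zariski-dense), so the Zariski-open locus where $d\pi_G$ has maximal rank must meet the rank-$r$ positive semidefinite matrices.
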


A result of Gross and Sullivant 
\cite{gross2018maximum} implies that the generic completion rank $\gcr(G) = d+1$, 
where $d$ is the smallest dimension such that $G$ is $d$-independent. 
We take this as the definition of generic completion rank in this paper.

In \cite[Theorem 1.15]{Betal} it was proved that a graph $G$ has $\mlt (G)=d+1$ if and only if $d$ is the smallest dimension in which no generic realisation of $G$ admits a PSD stress. Furthermore, \cite[Theorem 1.18]{Betal} proved that if $G$ contains a globally $\mathcal{R}_d$-rigid subgraph on at least $d+2$ vertices then $\mlt(G)$ is at least $d+2$.

We next use the understanding we have developed for edge-apex graphs to show that for any graph $G$ in this family $\mlt (G)=\gcr (G)$ and bound these parameters for $k$-edge-apex graphs for $k\leq 3$. In general equality does not hold, with the smallest counterexample being $K_{5,5}$ (as demonstrated in \cite{blekherman2019maximum}). However, the smallest $k$ such that $K_{5,5}$ is $k$-edge-apex is $k=9$. So potentially this equality could hold in significantly more generality. 
We need two lemmas.

\begin{lemma}\label{lem:edgeapexin4d}
Let $G=(V,E)$ be 2-edge-apex. Then $G$ is $\mathcal{R}_4$-independent.    
\end{lemma}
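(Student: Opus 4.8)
The plan is to reduce the $4$-dimensional statement to a known $3$-dimensional one via coning, i.e.\ \Cref{lem:cone}. Recall that cone$(G)$ is $\mathcal{R}_4$-independent if and only if $G$ is $\mathcal{R}_3$-independent. But here we want to go the other way: we want $\mathcal{R}_4$-independence of $G$ itself, not of a cone. So the first thing I would do is find a graph $H$ with $G = \operatorname{cone}(H)$, or more realistically, embed $G$ as a subgraph of a cone of something $\mathcal{R}_3$-independent. The natural candidate: if $G$ is $2$-edge-apex, then there are edges $e,f$ with $G-\{e,f\}$ planar. By \Cref{thm:gluck}, $G-\{e,f\}$ is $\mathcal{R}_3$-independent. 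The issue is the two extra edges $e,f$.

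So the core of the argument should be: \textbf{planar graphs plus two edges are $\mathcal{R}_3$-independent once we cone}, equivalently, if $P$ is planar then $P+\{e,f\}$ coned is $\mathcal{R}_4$-independent. Actually I think the cleaner route is: let $P = G - \{e,f\}$ be planar, and consider $G' = \operatorname{cone}(P)$, which is $\mathcal{R}_4$-independent by \Cref{lem:cone}(i) and \Cref{thm:gluck}. Now $G$ is a subgraph of $G'+\{e,f\}$ only after we delete the cone apex — but deleting a vertex and keeping $\mathcal{R}_4$-independence is trivial (subgraphs of independent graphs are independent). The real content is showing that $G' + \{e,f\} = \operatorname{cone}(P) + \{e,f\}$ is $\mathcal{R}_4$-independent, and since $\operatorname{cone}(P)+\{e,f\} = \operatorname{cone}(P+\{e,f\})$ when $e,f$ are not incident to the apex — but they aren't, they're edges of $P$'s vertex set — we get $\operatorname{cone}(P+\{e,f\})$. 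So by \Cref{lem:cone}(i) it suffices to show $P+\{e,f\}$ is $\mathcal{R}_3$-independent whenever $P$ is planar. Hmm — but that's not true in general ($\dbgraph$ is a planar graph plus two edges that is $\mathcal{R}_3$-dependent). So this approach as stated fails.

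Let me reconsider. The right move is probably to add the edges \emph{before} deleting the apex, using that in the cone we have much more room. Concretely: take $P = G-\{e,f\}$ planar with $e = u_1v_1$, $f = u_2v_2$. Let $G^+$ be $\operatorname{cone}(P)$ with apex $z$; this is $\mathcal{R}_4$-rigid and independent ($P$ is $\mathcal{R}_3$-rigid as it extends to a triangulation... no, $P$ need not be rigid, but $\operatorname{cone}(P)$ is still $\mathcal{R}_4$-independent). Now I want to add $e$ and $f$. Use \Cref{lem:intbridge}\ref{it:intbridge:rank}-type reasoning, or better: add a new vertex. Actually, the slickest argument: $G$ is a subgraph of $\operatorname{cone}(P) + \{e,f\}$ with one extra vertex $z$; if I can show $\operatorname{cone}(P)+\{e,f\}$ is $\mathcal{R}_4$-independent I am done. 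Since $\operatorname{cone}(P)+\{e,f\} = \operatorname{cone}(P+\{e,f\})$, and $P+\{e,f\}$ need \emph{not} be $\mathcal{R}_3$-independent, I instead add the two edges one at a time using $1$-extensions in the cone: in $\operatorname{cone}(P)$, vertex $z$ has high degree; a $4$-dimensional $1$-extension deleting a degree-$5$ vertex could realise the edge $e$. This is getting complicated; the honest main obstacle is finding the correct mechanism to absorb the two non-planar edges while staying in an independent graph, and I suspect the intended proof uses \Cref{thm:whiteleytriangplusedge} or \Cref{t:ear} in dimension $3$ coned up, handling $e$ and $f$ via the cone apex serving as a "free" fourth dimension: cone the $3$-dimensional result "$G-e$ planar $\Rightarrow G$ is $(3,6)$-sparse $\Rightarrow$ $\mathcal{R}_3$-independent" (\Cref{t:ear}) to get that $\operatorname{cone}(G-f)$ is $\mathcal{R}_4$-independent, then add the single edge $f$ — and a planar graph plus one edge, coned, plus one more edge, should be handled by a direct stress/rank argument or by \Cref{lem:intbridge}. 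The hard part will be the very last edge: showing $\operatorname{cone}(\text{planar}+e)+f$ has no equilibrium stress, for which I would exhibit an explicit generic-enough realisation (e.g.\ place the planar part in a plane, the apex off it, and the endpoints of $f$ in general position) and argue the rigidity matrix has full row rank by a Maxwell-count plus a connectivity/"no overbraced subgraph" argument, invoking $2$-edge-apicity to rule out a $\dbgraph$-like obstruction.
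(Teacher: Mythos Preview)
Your coning strategy has a genuine gap that you yourself identify but do not close. Every route you sketch with a \emph{new} apex vertex ultimately needs $\operatorname{cone}(P+\{e,f\})=\operatorname{cone}(G)$, or the one-edge-smaller variant $\operatorname{cone}(G-f)+f=\operatorname{cone}(G)$, to be $\mathcal{R}_4$-independent; by \Cref{lem:cone} this is exactly the statement that $G$ is $\mathcal{R}_3$-independent, which fails for the double banana \dbgraph. Your appeal to \Cref{t:ear} to salvage $\mathcal{R}_3$-independence of $G-f$ is also illegitimate: \Cref{t:ear} requires $(3,6)$-sparsity as a \emph{hypothesis}, and an edge-apex graph need not be $(3,6)$-sparse (any edge-apex graph containing a copy of $K_5$, for instance). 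The closing proposal to ``exhibit a realisation and rule out a \dbgraph-like obstruction'' is not an argument; concretely, for $G=K_5$ one gets $\operatorname{cone}(G-f)+f=K_6$, which \emph{is} $\mathcal{R}_4$-dependent, so the supergraph you construct can itself be dependent and nothing follows for $G$.

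The paper's proof avoids coning entirely and is a short induction on $|V|$. Since $|E|\le 3|V|-4$ there is a vertex $v$ with $d(v)\le 5$. If $d(v)\le 4$, then $G-v$ is still 2-edge-apex, hence $\mathcal{R}_4$-independent by induction, and a $4$-dimensional $0$-extension (\Cref{lem:01ext}) recovers $G$. If $d(v)=5$, then since $K_6$ is not 2-edge-apex there is a non-edge $xy$ inside $N(v)$; a brief case analysis on how the two apex edges meet $\{xv,yv\}$ shows that $G-v+xy$ is again 2-edge-apex, and a $4$-dimensional $1$-extension closes the induction. No stress computations or coning are needed.
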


\begin{proof}
Since a planar spanning subgraph of $G$ has at most $3|V|-6$ edges, $G$ has at most $3|V|-4$ edges and hence has a vertex of degree at most 5.
This allows us to apply an elementary induction argument (on $|V|$) using \Cref{lem:01ext}. Specifically, in the induction step,  if $G$ has a vertex $v$ of degree at most 4 then $G-v$ is 2-edge-apex (it may also be edge-apex or planar) and hence $\mathcal{R}_4$-independent by induction, and then $G$ is $\mathcal{R}_4$-independent by \Cref{lem:01ext}.
So $G$ has a vertex $v$ of degree 5.
Note that $K_6$ is not 2-edge-apex so there is a pair $x,y\in N(v)$ such that $xy\notin E$.
By definition there exist $e,f\in E$ such that $G-\{e,f\}$ is planar. 
If $\{e,f\} = \{xv, yv\}$ then $G-v$ is planar and $G-v +xy$ is edge-apex.
If $|\{e,f\} \cap \{xv, yv\}|=1$ then $G-v$ is edge-apex and $G-v+xy$ is 2-edge-apex.
Suppose $\{e,f\} \cap \{xv, yv\} = \emptyset$.
Then, in any planar embedding of $G-\{e,f\}$ we can replace the path of length two from $x$ to $y$ (in $G-\{e,f\}$) by the edge $xy$ (in $G-\{e,f\}-v$) to show that $G-\{e,f\}-v+xy$ is planar and hence $G-v+xy$ is 2-edge-apex. Hence, $G-v+xy$ is $\mathcal{R}_4$-independent by induction and then $G$ is $\mathcal{R}_4$-independent by \Cref{lem:01ext}.
\end{proof}

\begin{lemma}\label{lem:edge3apex}
Let $G=(V,E)$ be 3-edge-apex. Then $G$ is $\mathcal{R}_5$-independent. Moreover, $G$ is $\mathcal{R}_4$-independent if and only if $G$ contains no subgraph isomorphic to $K_6$. 
\end{lemma}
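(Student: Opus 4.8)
The plan is to mirror the structure of the proof of \Cref{lem:edgeapexin4d}, but one dimension higher and with the extra bookkeeping forced by possible copies of $K_6$. First I would establish the easy edge count: a planar spanning subgraph of $G$ has at most $3|V|-6$ edges, so a $3$-edge-apex graph has at most $3|V|-3$ edges and hence a vertex $v$ of degree at most $5$. This gives the base for an induction on $|V|$ using \Cref{lem:01ext}: if $\deg_G(v)\le 5$ we would like to $0$- or $1$-reduce at $v$ and conclude by induction. For $\mathcal{R}_5$-independence this is immediate when $\deg_G(v)\le 6$, but since we only have $\deg_G(v)\le 5$ we only ever need $0$-extensions (degree $\le 5$) and $1$-extensions (degree $6$) — both of which are fine. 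The only subtlety, exactly as in \Cref{lem:edgeapexin4d}, is that a $1$-reduction at a degree-$6$ vertex requires a non-edge among the neighbours; since $K_7$ is not $3$-edge-apex, if $\deg_G(v)=6$ then $N(v)$ is not a clique, so some non-edge $xy$ exists, and I would check that $G-v+xy$ is again $3$-edge-apex by the same path-replacement argument used there (case split on how the $\le 3$ deletable edges meet $\{xv,yv,\ldots\}$, replacing the $x$–$v$–$y$ path by the edge $xy$ in a planar embedding of the planarising subgraph). This yields the $\mathcal{R}_5$-independence statement.

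For the second statement, the direction ``$G$ contains $K_6$ $\Rightarrow$ not $\mathcal{R}_4$-independent'' is trivial, since $K_6$ is an $\mathcal{R}_4$-circuit (it has $15=4\cdot 6-\binom{5}{2}+1$ edges and is $\mathcal{R}_4$-rigid). For the converse, I would run the same induction on $|V|$ but now targeting $\mathcal{R}_4$-independence, so the permissible reductions are $0$-extensions (degree $4$) and $1$-extensions (degree $5$). The edge bound again gives a vertex $v$ of degree at most $5$. If $\deg_G(v)\le 4$, reduce and apply induction (noting $G-v$ is still $3$-edge-apex and $K_6$-free). If $\deg_G(v)=5$, a $1$-reduction needs a non-edge $xy$ in $N(v)$: this is exactly where $K_6$-freeness is used, since $N(v)\cup\{v\}$ would otherwise be a $K_6$. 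Having such a non-edge, I would pick $x,y$ and verify, via the same case analysis on the three planarising edges and path-replacement in a planar embedding, that $G-v+xy$ is $3$-edge-apex; and I must also check $G-v+xy$ is $K_6$-free. The latter is the one genuinely new point: contracting/replacing could conceivably create a $K_6$. But a $K_6$ in $G-v+xy$ using the new edge $xy$ would give, together with $v$, a subgraph of $G$ on those six vertices plus $v$ that is $K_6$ minus the edge $xy$ plus all edges to $v$; since $x,y\in N(v)$ this is a graph on $7$ vertices with $14+5-1=$ many edges — more carefully, it contains $K_7$ minus one edge, which is not $3$-edge-apex (its edge-apex number exceeds $3$), contradiction. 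So $G-v+xy$ is $K_6$-free, and induction applies.

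The main obstacle I anticipate is precisely the verification in the degree-$5$ case for $\mathcal{R}_4$-independence that the $1$-reduction simultaneously preserves both the $3$-edge-apex property \emph{and} $K_6$-freeness; the planarity-preservation argument is routine (it is the same path-substitution trick already used in \Cref{lem:edgeapexin4d}), but the interaction with the $K_6$-exclusion needs a careful count to rule out the new edge $xy$ completing a $K_6$. A secondary, more bookkeeping-heavy point is handling the case where the three deletable edges include one or two of the edges incident to $v$ at $x$ or $y$: there one gets a reduction in the ``edge-apex deficiency'' of $G-v$ or $G-v+xy$ down to $2$ or $1$, and one should note these still fall under the inductive hypothesis (a $2$-edge-apex or edge-apex graph is a fortiori $3$-edge-apex, and edge deletions cannot create a $K_6$). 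Assembling these cases gives the claimed characterisation.
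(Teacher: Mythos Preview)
Your treatment of the $\mathcal{R}_5$-independence and the easy direction of the $\mathcal{R}_4$-statement is fine and matches the paper. The gap is in your argument that the $1$-reduction $G-v+xy$ at a degree-$5$ vertex remains $K_6$-free. You claim that a new $K_6$ on a $6$-set $S\ni x,y$ would, together with $v$, yield ``$K_7$ minus one edge'' inside $G$. But $v$ has only five neighbours, and there is no reason all of them lie in $S$: a priori $|N(v)\cap S|$ can be as small as~$2$. In that extreme case $G[S\cup\{v\}]$ is just a subdivision of $K_6$ (smooth $v$ on the path $x\,v\,y$), which has edge-apicity exactly~$3$ and so gives no contradiction with $G$ being $3$-edge-apex. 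Even when $|N(v)\cap S|=3$ or $4$ the induced subgraph on $S\cup\{v\}$ has at most $18$ edges on $7$ vertices, which is compatible with being $3$-edge-apex. So your ``$K_7-e$'' count is simply wrong, and the inductive step does not close.

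The paper does \emph{not} try to show that some $1$-reduction stays $K_6$-free. Instead it assumes every admissible $1$-reduction at every degree-$5$ vertex creates a $K_6$, notes that $|E|\le 3|V|-3$ forces at least three vertices of degree~$5$, and then runs a global argument: for each such vertex the corresponding copy of $K_6-xy$ is analysed, intersection and edge-count considerations force all these copies to coincide, and finally a degree count on $x$ and $y$ (each adjacent to every degree-$5$ vertex) yields $6|V|-6\ge 2|E|\ge 6|V|+t-4$ with $t\ge 3$, a contradiction. This is the missing idea: you cannot avoid the bad case locally, you have to rule it out by a global counting argument across several degree-$5$ vertices.
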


\begin{proof}
We proceed as in the proof of \Cref{lem:edgeapexin4d} and the first conclusion is a straightforward adaptation. So, suppose $G$ contains no subgraph isomorphic to $K_6$. If, for some $v\in V$, $d(v)\leq 4$ then $G-v$ is 3-edge-apex and, by induction, \Cref{lem:01ext} gives the result. So we may suppose some $v\in V$ has degree 5. Note that, since $|E|\leq 3|V|-3$, $G$ has at least three vertices of degree 5.
Since $G$ contains no copy of $K_6$,
there exist non-adjacent neighbours $x,y$ of $v$.
As in the proof of \Cref{lem:edgeapexin4d} we may delete $v$ and add any edge $xy$ among its neighbours (that does not already exist) to obtain a smaller 3-edge-apex graph $H=G-v+xy$. If $H$ has no subgraph isomorphic to $K_6$ then, by induction and \Cref{lem:01ext}, we are done. 

So suppose that $x,y$ is contained in a subgraph $H$ of $G-v$ isomorphic to $K_6-xy$.
Since the subgraph induced by $v$,
its neighbours and $H$ is $(3,3)$-sparse,
then $v$ has at least one neighbour not contained in $H$.
Choose another vertex $v'$ of $G$ with degree 5.
By a similar argument to that given above,
every pair of distinct non-adjacent neighbours $x',y'$ of $v'$ are contained in a copy of $K_6-xy$,
and $v'$ has at least one pair of neighbours with this property.
If $v' \in \{x,y\}$ then all the neighbours of $v$ would be contained in $H$,
which contradicts an earlier observation.
If $v'$ is otherwise contained in $H_v$ then there exists a vertex $z$ that is adjacent to all other vertices in $H$ except $v'$.
However,
the graph induced by $H +z$ would now have $19 = (3\cdot 7 - 6) + 4$ edges,
contradicting that $G$ is 3-edge-apex.
Hence, $v'$ is not contained in $H$. 

Suppose $x', y'$ are contained in a different copy of $K_6-xy$, denoted $H'$. First suppose that $x',y'$ are not both contained in $H$ (and so $x,y$ are not both contained in $H'$).
Then $H \cap H'$ is complete.
If $|V(H) \cap V(H')| \leq 2$ then it is clear that $H \cup H'$ is not 3-edge-apex.
If $3 \leq |V(H) \cap V(H')| \leq 5$ then $i(V(H \cup H')> 3|V(H\cup H')|-3$,
which contradicts the hypothesis that $H \cup H'$ is 3-edge-apex.
Hence, $x',y'$ are contained in $H$.

It remains to deal with the case when $H=H'$.
From all of our previous arguments,
it follows that every vertex of degree 5 is adjacent to both $x$ and $y$.
Fix $t$ to be the number of degree 5 vertices in $G$.
Then $d_G(x),d_G(y) \geq 4 + t$.
As $G$ is 3-edge-apex, $|E| \leq (3|V| - 6) + 3$,
so
\begin{equation*}
    6|V| - 6 \geq 2|E| \geq 5t + 2(4+t) + 6(|V| - t - 2) = 6 |V|+t -4.
\end{equation*}
However, this implies $t \leq -2$,
which is clearly false.
This contradiction completes the proof.
\end{proof}

\begin{theorem}\label{thm:mltea}
    Let $G$ be a $k$-edge-apex graph with $k\leq 3$.
    Then either:
    \begin{enumerate}
        \item \label{it:thm:mltea1} $k=1$ and $\mlt(G) = \gcr(G) \leq 5$;
        \item \label{it:thm:mltea2} $k=2$ and $\mlt(G)\leq \gcr(G)\leq 5$; or
        \item \label{it:thm:mltea3} $k=3$ and either $G$ contains a subgraph isomorphic to $K_6$ and $\mlt (G)=\gcr (G) = 6$, or $G$ has no subgraph isomorphic to $K_6$ and $\mlt(G) \leq \gcr(G) \leq 5$.
    \end{enumerate}
\end{theorem}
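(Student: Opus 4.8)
The plan is to read off an upper bound on $\gcr(G)$ from the independence lemmas \Cref{lem:edgeapexin4d} and \Cref{lem:edge3apex}, to obtain $\mlt(G)\le\gcr(G)$ directly from \Cref{thm:uhler}, and to supply matching lower bounds on $\mlt(G)$ either from the globally-rigid-subgraph criterion \cite[Theorem 1.18]{Betal} (a graph with a globally $\mathcal{R}_d$-rigid subgraph on at least $d+2$ vertices has $\mlt\ge d+2$) when $\gcr(G)$ is large, or, when $\gcr(G)\le 4$, from the known equality $\mlt(G)=\gcr(G)$ in that range.

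I would first dispatch \ref{it:thm:mltea2} and the two halves of \ref{it:thm:mltea3}. If $G$ is $2$-edge-apex then it is $\mathcal{R}_4$-independent (\Cref{lem:edgeapexin4d}), so $\gcr(G)\le 5$ and hence $\mlt(G)\le\gcr(G)\le 5$. If $G$ is $3$-edge-apex with no $K_6$ subgraph then it is $\mathcal{R}_4$-independent (\Cref{lem:edge3apex}), giving the same bound. If $G$ is $3$-edge-apex and contains $K_6$, then, since $K_6$ has $\binom{6}{2}=15>4\cdot 6-\binom{5}{2}$ edges it is $\mathcal{R}_4$-dependent (\Cref{lem:max}), so $G$ is $\mathcal{R}_4$-dependent, while $G$ is $\mathcal{R}_5$-independent by \Cref{lem:edge3apex}; hence $\gcr(G)=6$. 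As $K_6$ is globally $\mathcal{R}_4$-rigid on $6=4+2$ vertices, \cite[Theorem 1.18]{Betal} yields $\mlt(G)\ge 6$, so $\mlt(G)=\gcr(G)=6$.

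The substance is \ref{it:thm:mltea1}. An edge-apex graph is $2$-edge-apex, so $\gcr(G)\le 5$. If $G$ is $(3,6)$-sparse then \Cref{t:ear} makes it $\mathcal{R}_3$-independent, so $\gcr(G)\le 4$ and $\mlt(G)=\gcr(G)$ by the quoted small-$\gcr$ equality. Otherwise $G$ is $\mathcal{R}_3$-dependent (\Cref{t:ear}) and $\mathcal{R}_4$-independent (\Cref{lem:edgeapexin4d}), so $\gcr(G)=5$, and $G$ is non-planar; pick an edge $e$ with $G-e$ planar and extend $G-e$ to a triangulation $G^{\Delta}=G-e+e_1+\dots+e_k$. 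Exactly as in the proof of \Cref{t:ear}, $G^{\Delta}+e$ has a unique $\mathcal{R}_3$-circuit $H$, which is $4$-connected with $H-e$ a planar triangulation, so $H$ is globally $\mathcal{R}_3$-rigid by \Cref{thm:JT}. Since $G$ is $\mathcal{R}_3$-dependent, $H$ can contain none of the $e_i$ (otherwise $G^{\Delta}+e-e_i$, and hence its subgraph $G$, would be $\mathcal{R}_3$-independent), so $H\subseteq G$; and $H-e$ is a triangulation with a non-edge, hence has at least $5$ vertices. Thus $G$ contains a globally $\mathcal{R}_3$-rigid subgraph on at least $5=3+2$ vertices, so $\mlt(G)\ge 5$ by \cite[Theorem 1.18]{Betal}, and with $\mlt(G)\le\gcr(G)=5$ we conclude $\mlt(G)=\gcr(G)=5$.

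I expect the delicate points to be two. First, one must make sure that the $\mathcal{R}_3$-circuit borrowed from the proof of \Cref{t:ear} is genuinely a subgraph of $G$ on at least $d+2=5$ vertices when $G$ is $\mathcal{R}_3$-dependent; this is handled by the ``$H$ avoids every $e_i$'' observation above, together with the remark that a triangulation on at most four vertices is complete and so has no non-edge. Second, the $(3,6)$-sparse case leans on the equality $\mlt=\gcr$ for $\gcr\le 4$; if that cannot simply be cited, the real work is to show instead that every $\mathcal{R}_3$-dependent graph admits a generic realisation in $\mathbb{R}^3$ carrying a positive semidefinite equilibrium stress (for instance by placing such a stress on an $\mathcal{R}_3$-circuit inside $G$ and extending it by zero), and verifying this is the main obstacle.
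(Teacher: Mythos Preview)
Your argument is correct and follows the same overall strategy as the paper: bound $\gcr$ above via \Cref{lem:edgeapexin4d} and \Cref{lem:edge3apex}, and when $\gcr(G)=5$ (respectively $6$) force $\mlt(G)$ up by exhibiting a globally $\mathcal{R}_3$-rigid (respectively $\mathcal{R}_4$-rigid) subgraph and invoking \cite[Theorem~1.18]{Betal}, while the small-$\gcr$ case is handled by \cite[Theorem~1.19]{Betal} (so your worry in the final paragraph is unfounded---this can simply be cited).

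The one place you work harder than necessary is part~\ref{it:thm:mltea1}. Rather than re-running the structure-tree construction from the proof of \Cref{t:ear} and checking that the resulting circuit $H$ avoids every $e_i$ and has at least five vertices, the paper just notes that any $\mathcal{R}_3$-circuit $H\subseteq G$ is itself edge-apex (as a subgraph of an edge-apex graph) and then applies \Cref{t:eagr} to conclude that $H$ is globally $\mathcal{R}_3$-rigid. This packages exactly the work you are redoing by hand. For part~\ref{it:thm:mltea3} your appeal to the global $\mathcal{R}_4$-rigidity of $K_6$ is a clean equivalent to the paper's PSD-stress formulation.
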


\begin{proof}
    \ref{it:thm:mltea1}: Since there exists $e$ so that $G-e$ is planar and hence 3-independent (by \Cref{thm:gluck}), it follows that $G$ is 4-independent and therefore $\gcr(G) \leq 5$.
    If $\gcr(G) \leq 4$ then $\mlt(G) = \gcr(G)$ by \cite[Theorem 1.19]{Betal}.
    Suppose that $\gcr(G) = 5$.
    Then $G$ contains a $\mathcal{R}_3$-circuit $H$.
    By \Cref{t:eagr},
    $H$ is globally $\mathcal{R}_3$-rigid. It is simply unpacking the definition to check that $\mlt (H) \leq \mlt (G)$. Hence, by \Cref{thm:uhler}, we have
    \begin{align*}
        5 \leq \mlt(H) \leq \mlt(G) \leq \gcr(G) \leq 5,
    \end{align*}
    and so $\mlt(G) = \gcr(G)$ as required.

    \ref{it:thm:mltea2}: The first inequality is \Cref{thm:uhler} and the second is immediate from \Cref{lem:edgeapexin4d}. 

    \ref{it:thm:mltea3}: The conclusions follow from \Cref{lem:edge3apex}, the fact that $K_6$ has a generic realisation with a PSD stress in 4-dimensions and \Cref{thm:uhler}.
\end{proof}

We now consider apex graphs. The following is immediate after combining \Cref{lem:cone} and \Cref{thm:gluck}.

\begin{lemma}\label{lem:apexddim} 
Every $k$-apex graph is $\mathcal{R}_{3+k}$-independent. 
\end{lemma}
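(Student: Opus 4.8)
The plan is to induct on $k$, using \Cref{lem:cone} (coning) together with \Cref{thm:gluck} (planarity of $\mathcal{R}_3$-independence, i.e.\ every planar graph is $\mathcal{R}_3$-independent) as the base case, and handling the inductive step by identifying an apex vertex and coning across it.

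The base case is $k=0$: a $0$-apex graph is planar, so it is $\mathcal{R}_3$-independent by \Cref{thm:gluck}. For the inductive step, suppose every $(k-1)$-apex graph is $\mathcal{R}_{3+(k-1)}$-independent, and let $G$ be a $k$-apex graph. By definition there is a vertex $v\in V(G)$ with $G-v$ being $(k-1)$-apex. The key observation is that $G$ is a subgraph of the cone $\widehat{G-v}$ obtained from $G-v$ by adding a new vertex adjacent to every vertex of $G-v$: indeed, $v$ plays the role of the cone apex, and any edges of $G$ not incident to $v$ already lie in $G-v$, while any edge $vw$ of $G$ lies among the cone edges. By the inductive hypothesis $G-v$ is $\mathcal{R}_{3+(k-1)} = \mathcal{R}_{2+k}$-independent, so by part~(i) of \Cref{lem:cone} the cone $\widehat{G-v}$ is $\mathcal{R}_{3+k}$-independent. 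Since independence in a matroid is preserved under taking subgraphs (subsets of an independent set are independent), $G$ is $\mathcal{R}_{3+k}$-independent, completing the induction.

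I do not anticipate a genuine obstacle here; the statement really is an immediate corollary of the two cited results, as the paper already signals. The one point that warrants a sentence of care is the reduction from "$G$ is $k$-apex'' to "$G$ is a subgraph of a cone over a $(k-1)$-apex graph'': one must note that deleting the apex vertex $v$ and then re-coning gives back a (super)graph of $G$, not $G$ itself, and invoke subgraph-closure of matroid independence to descend from the cone to $G$. Everything else is the bookkeeping $3+(k-1)+1 = 3+k$ matching the dimension shift in \Cref{lem:cone}.
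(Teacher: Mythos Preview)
Your argument is correct and is exactly the one the paper intends: it does not give an explicit proof but simply notes the lemma is immediate from \Cref{lem:cone} and \Cref{thm:gluck}, and your induction on $k$ with the subgraph-of-a-cone observation is precisely how one unpacks that remark.
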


Moreover by choosing a triangulation we see that the iterated cone is an example of a $k$-apex graph that is not $\mathcal{R}_{3+k-1}$-independent. A specific such example is $K_{k+4}$. 

Note that in the case when $k=1$, \Cref{lem:apexddim}  follows from the fact that linklessly embeddable graphs are $\mathcal{R}_4$-independent \cite{Nevo}. 

Similarly starting from the graph in \Cref{fig:3connR3circ} and iterating the coning operation, using \Cref{lem:cone}, we obtain.

\begin{lemma}
For all $k\geq 1$ there exist $k$-apex graphs that are $(k+2)$-connected flexible $\mathcal{R}_{k+2}$-circuits.    
\end{lemma}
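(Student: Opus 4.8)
The plan is to take the graph in \Cref{fig:3connR3circ} as a base case and iterate the coning operation, tracking four properties simultaneously. Write $H_1$ for that graph, which is recorded in \Cref{fig:3connR3circ} as a $3$-connected apex graph that is a flexible $\mathcal{R}_3$-circuit, and for $k\ge 2$ let $H_k$ be the cone of $H_{k-1}$ (so $H_k$ is $H_1$ with $k-1$ successive cones applied). I would then show that $H_k$ is a $k$-apex, $(k+2)$-connected, flexible $\mathcal{R}_{k+2}$-circuit.

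The circuit property is the easiest: since $H_1$ is an $\mathcal{R}_3$-circuit, the second part of \Cref{lem:cone} gives by induction that $H_k$ is an $\mathcal{R}_{k+2}$-circuit. For flexibility I would argue via edge counts. A circuit $C$ of $\mathcal{R}_d$ has rank $|E(C)|-1$, and by \Cref{lem:max} its rank is at most $d|V(C)|-\binom{d+1}{2}$; hence $C$ is $\mathcal{R}_d$-rigid exactly when $|E(C)|=d|V(C)|-\binom{d+1}{2}+1$, and is flexible whenever $|E(C)|\le d|V(C)|-\binom{d+1}{2}$. A one-line computation (coning adds one vertex and $|V(C)|$ edges, raises the dimension by one, and $\binom{d+2}{2}-\binom{d+1}{2}=d+1$) shows that coning leaves the quantity $|E(C)|-\bigl(d|V(C)|-\binom{d+1}{2}\bigr)$ unchanged. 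Since this quantity is at most $0$ for the flexible circuit $H_1$, it is at most $0$ for each $H_k$, so $H_k$ is a flexible $\mathcal{R}_{k+2}$-circuit.

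The remaining two properties pass through a cone cleanly. If $G$ is $c$-connected, then its cone $G\ast w$ is $(c+1)$-connected: deleting a vertex set $T$ with $|T|\le c$ leaves a connected graph, because $w$ dominates $G\ast w-T$ when $w\notin T$, and $G\ast w-T=G-(T\setminus\{w\})$ is connected when $w\in T$ since $|T\setminus\{w\}|\le c-1$. So the $3$-connectivity of $H_1$ yields $(k+2)$-connectivity of $H_k$. Likewise, if $S$ is a set of $j$ vertices with $G-S$ planar, then $(G\ast w)-(S\cup\{w\})=G-S$ is planar, so the cone of a $j$-apex graph is $(j+1)$-apex; since $H_1$ is apex, $H_k$ is $k$-apex, finishing the proof.

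I do not expect any serious obstacle: the single non-formal ingredient is the base graph $H_1$ itself, whose three required properties are supplied by \Cref{fig:3connR3circ} (a finite check). The rest is bookkeeping — verifying that ``circuit'', ``flexible'', ``$c$-connected'' and ``$j$-apex'' each transform predictably under coning — and all four do.
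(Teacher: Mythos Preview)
Your proposal is correct and follows exactly the approach the paper intends: start from the graph of \Cref{fig:3connR3circ} and iterate coning, using \Cref{lem:cone} for the circuit property. You have simply supplied the details the paper leaves implicit --- the edge-count invariance for flexibility, and the routine verifications that coning raises connectivity and apex number by one each.
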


It is easy to deduce from \Cref{lem:cone} the following (which is tight, e.g. repeatedly cone a triangulation). See also \cite[Lemma 2.9]{Betal2} which shows that coning adds precisely 1 to the MLT.

\begin{proposition}\label{apexmlt}
For any $k\geq 1$, let $G$ be a $k$-apex graph. Then $\mlt \leq \gcr (G) \leq 4+k$.
\end{proposition}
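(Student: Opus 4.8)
The plan is to reduce everything to the generic completion rank and then invoke the iterated cone construction. First I would note that the inequality $\mlt(G)\le\gcr(G)$ is exactly \Cref{thm:uhler}, so it suffices to prove $\gcr(G)\le 4+k$. By the definition of generic completion rank adopted in this paper (namely $\gcr(G)=d+1$ where $d$ is the least dimension in which $G$ is $\mathcal{R}_d$-independent), the bound $\gcr(G)\le 4+k$ is equivalent to the statement that $G$ is $\mathcal{R}_{3+k}$-independent.

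Next I would establish this independence. Write $P:=G-\{v_1,\dots,v_k\}$ for a planar subgraph obtained by deleting a $k$-element apex set. By \Cref{thm:gluck}, $P$ is $\mathcal{R}_3$-independent. Coning $P$ a total of $k$ times and applying \Cref{lem:cone}(i) at each step shows that the $k$-fold cone $C$ of $P$ is $\mathcal{R}_{3+k}$-independent. Identifying the $i$-th cone apex of $C$ with the vertex $v_i$, one checks that $G$ is a subgraph of $C$: each cone apex is adjacent in $C$ to every other vertex, so every edge of $G$ incident to some $v_i$ is present in $C$, as are all edges of $P$. Since $\mathcal{R}_{3+k}$-independence is inherited by subgraphs (deleting edges, and deleting now-isolated vertices, preserves independence in the rigidity matroid), it follows that $G$ is $\mathcal{R}_{3+k}$-independent, whence $\gcr(G)\le 4+k$. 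Alternatively, this independence statement is precisely \Cref{lem:apexddim}, which may simply be quoted.

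There is essentially no obstacle here: the proposition is a direct combination of \Cref{lem:cone}, \Cref{thm:gluck}, the definition of $\gcr$, and \Cref{thm:uhler}. The only points meriting a word of care are the verification that the $k$-fold cone of the planar part genuinely contains $G$, and the elementary observation that $\mathcal{R}_d$-independence restricts to subgraphs; both are immediate. As already noted before the statement, the bound $4+k$ is tight, for instance by repeatedly coning a triangulation, or by taking $G=K_{k+4}$ and applying \Cref{lem:complete}.
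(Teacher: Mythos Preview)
Your proof is correct and follows exactly the approach indicated in the paper, which merely remarks that the proposition is easy to deduce from \Cref{lem:cone} (equivalently, by quoting \Cref{lem:apexddim}) together with \Cref{thm:uhler}. Your extra care in verifying that $G$ embeds in the $k$-fold cone of its planar part and that $\mathcal{R}_d$-independence passes to subgraphs simply makes explicit what the paper leaves implicit.
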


It is tempting to conjecture that $\mlt (G)=\gcr (G)$ in the case when $k=1$ (i.e. $G$ is an apex graph). To prove this it suffices to show that every apex graph $G$ that is a $\mathcal{R}_3$-circuit has a 3-dimensional realisation with a PSD stress. We know this is true if $G$ is globally rigid in $\mathbb{R}^3$ by applying \cite{CGT}. It may be tractable when $G$ is $\mathcal{R}_3$-rigid. It is also tractable for flexible $\mathcal{R}_3$-circuits, such as in \Cref{fig:doublebanana}, that have a sufficiently special form to apply a gluing construction for PSD stresses \cite[Lemma 6.6]{Betal}. However, not all  flexible $\mathcal{R}_3$-circuits that are apex graphs have this form; an example is given in \Cref{fig:3connR3circ}.

We next analyse $\mlt(G)$ when $G$ is critically apex and critically edge-apex.

\begin{proposition}\label{prop:critapexmlt}
    Let $G$ be a critically apex graph. 
    Then $\mlt = \gcr (G) \leq 5$,
    with equality if and only if $G$ is the cone of a wheel on at least 4 vertices.
\end{proposition}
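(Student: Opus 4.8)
The plan is to establish three things together: that $\mlt(G)=\gcr(G)$ for every critically apex $G$, that both are at most $5$, and that the value $5$ is attained exactly on cones of wheels. The upper bound is immediate: a critically apex graph is apex, so $\gcr(G)\le 5$ by \Cref{lem:apexddim} (with $k=1$), and $\mlt(G)\le\gcr(G)$ always by \Cref{thm:uhler}. So the real work is to pin down the exact value.

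I would split on whether $G$ is $\mathcal{R}_3$-independent. If it is, then $\gcr(G)\le 4$, and \cite[Theorem 1.19]{Betal} (the same tool used in the proof of \Cref{thm:mltea}) gives $\mlt(G)=\gcr(G)$, with value strictly less than $5$. So assume $G$ is $\mathcal{R}_3$-dependent. By \Cref{thm:critapex}, $G$ is not $(3,6)$-sparse; in particular $G$ is non-planar. A short counting argument rules out a vertex of degree at most $3$: if $v$ had such degree then $G-v$ would be planar (as $G$ is critically apex), which forces $G$ to be $(3,6)$-sparse, contradicting our assumption. Hence $G$ has minimum degree at least $4$; by \Cref{thm:reconstruct}\ref{it:degree} it is at most $5$, and by \ref{it:reconstruct:mindeg5} minimum degree $5$ would make $G$ planar. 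So the minimum degree is exactly $4$, and \Cref{thm:reconstruct}\ref{it:reconstruct:mindeg4} places $G$ among the graphs of \Cref{fig:fiorini}. Exactly as noted in the proof of \Cref{thm:critapex}, the three sporadic graphs and the $4$-regular family listed there are $(3,6)$-sparse, so $G$ must be a cone of a wheel on at least $4$ vertices.

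To finish, I would observe that such a graph has at least $5$ vertices and is globally $\mathcal{R}_3$-rigid; this is classical and is also one direction of \Cref{lem:critwheel}. Thus $G$ contains a globally $\mathcal{R}_3$-rigid subgraph on at least $5$ vertices (namely $G$ itself), so $\mlt(G)\ge 5$ by \cite[Theorem 1.18]{Betal}; together with $\mlt(G)\le\gcr(G)\le 5$ this gives $\mlt(G)=\gcr(G)=5$. For the converse direction of the equality claim, if $G$ is the cone of a wheel on at least $4$ vertices then it is critically apex (it is one of the graphs of \Cref{fig:fiorini}) and not $(3,6)$-sparse, hence $\mathcal{R}_3$-dependent, so the argument above applies and again $\mlt(G)=\gcr(G)=5$. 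Combining the two cases yields $\mlt(G)=\gcr(G)\le 5$ with equality if and only if $G$ is the cone of a wheel on at least $4$ vertices.

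The step I expect to be the main obstacle is the classification in the second paragraph — that the only $\mathcal{R}_3$-dependent critically apex graphs are cones of wheels — which relies on \Cref{thm:reconstruct} and the explicit list in \Cref{fig:fiorini}. This is essentially already contained in the proof of \Cref{thm:critapex}, so the genuinely new content here is the organisation of the equivalences between $\mathcal{R}_3$-dependence, failure of $(3,6)$-sparsity, being a cone of a wheel, and global $\mathcal{R}_3$-rigidity, after which the maximum likelihood statement follows formally from the results of \cite{Betal}.
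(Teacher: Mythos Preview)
Your proposal is correct and follows essentially the same route as the paper: split on $(3,6)$-sparsity (equivalently $\mathcal{R}_3$-independence, via \Cref{thm:critapex}), invoke \cite[Theorem 1.19]{Betal} in the sparse case, and in the non-sparse case use \Cref{thm:reconstruct} and the list in \Cref{fig:fiorini} to force $G$ to be a cone of a wheel, then conclude from global $\mathcal{R}_3$-rigidity and $\mathcal{R}_4$-independence. The only cosmetic difference is that the paper rules out minimum degree $\leq 3$ by observing that every $\mathcal{R}_3$-circuit in $G$ must be spanning (and circuits have minimum degree $4$), whereas you argue directly that a degree-$\leq 3$ vertex over a planar graph yields $(3,6)$-sparsity; both are valid and equally short.
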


\begin{proof}
    If $G$ is $(3,6)$-sparse,
    then it is $\mathcal{R}_3$-independent by \Cref{thm:critapex} and hence $\gcr(G) \leq 4$. In this case the result follows from \cite[Theorem 1.19]{Betal}.

    Suppose $G$ is not $(3,6)$-sparse.
    Hence, $G$ is not planar, but every induced subgraph of $G$ is planar.
    Since all planar graphs are $\mathcal{R}_3$-independent (\Cref{thm:gluck}),
    it follows that all $\mathcal{R}_3$-circuits contained in $G$ must also span $G$.
    As noted in the proof of \Cref{lem:critwheel},
    $G$ has minimum degree at most 5.
    If $G$ has minimum degree 3 or less then it does not contain a spanning $\mathcal{R}_3$-circuit,
    and if $G$ has minimum degree 5 then it is planar.
    Hence, $G$ has minimum degree 4.
    \Cref{thm:reconstruct} now implies that $G$ is  the cone of a wheel on at least 4 vertices (since the other options are $(3,6)$-sparse). Hence, $G$ is globally $\mathcal{R}_3$-rigid. Since $G$ is $\mathcal{R}_4$-independent we now have $\mlt(G)=\gcr(G)=5$.
\end{proof}

Finally we deal with critically edge-apex graphs.

\begin{lemma}
Let $G$ be critically edge-apex.  Then $\mlt(G)=\gcr(G)$.  
\end{lemma}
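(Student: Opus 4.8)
The plan is to split into two cases according to whether $G$ is planar or non-planar. If $G$ is planar then it is $\mathcal{R}_3$-independent by \Cref{thm:gluck}, so $\gcr(G)\leq 4$, and the claimed equality $\mlt(G)=\gcr(G)$ follows immediately from \cite[Theorem 1.19]{Betal} (which gives equality whenever $\gcr(G)\leq 4$). So the work is all in the non-planar case, where I would invoke \Cref{cor:kuratowksi}: a non-planar critically edge-apex graph is a subdivision of $K_5$ or of $K_{3,3}$. I would first record, as in the proof of \Cref{lem:critedgeind}, that such a subdivision is $(3,6)$-sparse (its degree sequence is $(2^{(n-5)},4^{(5)})$ or $(2^{(n-6)},3^{(6)})$, so trivially it is $(3,6)$-sparse), hence $\mathcal{R}_3$-independent by \Cref{t:ear}, and so $\gcr(G)\leq 4$ here too. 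Then \cite[Theorem 1.19]{Betal} again yields $\mlt(G)=\gcr(G)$, and in fact both equal $d+1$ where $d\leq 3$ is the smallest dimension in which $G$ is independent.

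Put more uniformly: for \emph{every} critically edge-apex graph $G$, whether planar or a subdivision of $K_5$ or $K_{3,3}$, we have that $G$ is $\mathcal{R}_3$-independent (by \Cref{thm:gluck} or \Cref{t:ear} via $(3,6)$-sparsity), hence $\gcr(G)\leq 4$, and therefore by \cite[Theorem 1.19]{Betal} the maximum likelihood threshold coincides with the generic completion rank. One could even be more precise in the subdivision case: such graphs have bounded treewidth-like structure (they are series–parallel-ish away from the branch vertices), so $d$ is typically $1$ or $2$; but that refinement is not needed for the stated equality and I would not pursue it.

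The only genuine subtlety — and hence the main obstacle — is making sure the inequality chain is closed at the bottom rather than the top. For general graphs one only has $\mlt(G)\leq\gcr(G)$ (\Cref{thm:uhler}), and equality can fail (e.g. $K_{5,5}$); what rescues us is the hypothesis $\gcr(G)\leq 4$, for which \cite[Theorem 1.19]{Betal} asserts $\mlt(G)=\gcr(G)$. So the whole proof reduces to verifying $\gcr(G)\leq 4$, i.e. that $G$ is $\mathcal{R}_3$-independent, and the substance of that is precisely \Cref{cor:kuratowksi} together with \Cref{t:ear} (for the non-planar case) and \Cref{thm:gluck} (for the planar case). I would therefore write the proof in one short paragraph: cite \Cref{cor:kuratowksi} to get a subdivision of $K_5$ or $K_{3,3}$ when $G$ is non-planar, note this forces $(3,6)$-sparsity, apply \Cref{t:ear} (or \Cref{thm:gluck} in the planar case) to conclude $\mathcal{R}_3$-independence and hence $\gcr(G)\leq 4$, and close with \cite[Theorem 1.19]{Betal}.
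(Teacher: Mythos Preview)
Your argument has a genuine gap: it fails for $G=K_5$. The complete graph $K_5$ is critically edge-apex (deleting any edge yields a planar graph), but it is \emph{not} $(3,6)$-sparse (it has $10>3\cdot 5-6$ edges) and hence not $\mathcal{R}_3$-independent; indeed $K_5$ is an $\mathcal{R}_3$-circuit. Thus $\gcr(K_5)=5$, and your appeal to \cite[Theorem 1.19]{Betal} (which requires $\gcr\leq 4$) does not apply. Your sentence ``its degree sequence is $(2^{(n-5)},4^{(5)})$ \ldots\ so trivially it is $(3,6)$-sparse'' is precisely where the error enters: for $n=5$ the count fails. Note that \Cref{lem:critedgeind}, which you cite, explicitly excludes $K_5$ from its independence conclusion.

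The paper's proof handles $K_5$ separately: it uses \Cref{lem:complete} to get $\mlt(K_5)=5$ directly, and observes that $K_5$ is $\mathcal{R}_4$-independent so $\gcr(K_5)=5$, giving equality. For proper subdivisions of $K_5$ (and all subdivisions of $K_{3,3}$) your strategy is fine and coincides with the paper's, though the paper is slightly sharper in the $K_{3,3}$ case, noting $\mathcal{R}_2$-independence and hence $\gcr(G)\leq 3$. Once you add the missing $K_5$ case, your proof is complete.
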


\begin{proof}
By \Cref{cor:kuratowksi}, $G$ can be constructed from $K_5$ or $K_{3,3}$ by a sequence of subdivisions. Any graph obtained from $K_{3,3}$ by a sequence of subdivisions is $\mathcal{R}_2$-independent (since $K_{3,3}$ is) and hence $\gcr(G)=3$. The result follows from \cite[Theorem 1.19]{Betal}.   

So suppose that $G$ is obtained from $K_5$ by a sequence of subdivisions. If $G=K_5$ then $G$ is globally $\mathcal{R}_3$-rigid and hence (by \Cref{lem:complete}) $\mlt(G)=5$. Since $K_5$ is $\mathcal{R}_4$-independent, $\mlt(G)=\gcr(G)$. If $G$ is obtained  by subdividing at least one edge then $G$ is $\mathcal{R}_3$-independent and hence $\gcr(G)\leq 4$ so the result again follows from \cite[Theorem 1.19]{Betal}.
\end{proof}

For the reader interested in maximum likelihood thresholds convenience we tabulate the cases we have established in \Cref{tab:mlt}. Further known results can be found in \cite{Betal2,Betal}.

\begin{table}[ht]
    \centering
    \begin{tabular}{lll}
         \toprule
         Graph & GCR & MLT \\\midrule
         edge-apex & $\leq 5$ & $=\gcr$  \\
        2-edge-apex & $\leq 5$ & $=\gcr$   \\
        3-edge-apex with no $K_6$ & $\leq 5$ & $\leq \gcr$   \\
        3-edge-apex with a $K_6$ & $=6$ & $=\gcr$   \\
        $k$-apex & $\leq 4+k$ & $\leq \gcr$  \\
        critically apex & $\leq 5$ & $=\gcr$   \\
        critically edge-apex & $\leq 5$ & $=\gcr$ \\
         \bottomrule
    \end{tabular}
    \caption{Equality of maximum likelihood threshold and generic completion rank for the graph classes in this paper.}
    \label{tab:mlt}
\end{table}

\addcontentsline{toc}{section}{Acknowledgments}
\section*{Acknowledgements}

This project originated from the Fields Institute Focus Program on Geometric Constraint Systems.
The authors are grateful to the Fields Institute for their hospitality and financial support. We thank Katie Clinch, Tony Huyn and Bill Jackson for helpful discussions.
S.\,D.\ was supported by the Heilbronn Institute for Mathematical Research. G.\,G.\ was partially supported by the Austrian Science Fund (FWF): 10.55776/P31888. A.\,N.\ was partially supported by EPSRC grant EP/X036723/1.

\bibliographystyle{plainurl}
{\footnotesize
\bibliography{references}

\begin{thebibliography}{10}

\bibitem{AsimowRoth}
Leonard Asimow and Ben Roth.
\newblock The rigidity of graphs.
\newblock {\em Transactions of the American Mathematical Society},
  245:279--289, 1978.
\newblock \href {https://doi.org/10.2307/1998867} {\path{doi:10.2307/1998867}}.

\bibitem{Betal2}
Daniel Bernstein, Sean Dewar, Steven Gortler, Anthony Nixon, Meera Sitharam,
  and Louis Theran.
\newblock Computing maximum likelihood thresholds using graph rigidity.
\newblock {\em Algebraic statistics}, 2024.
\newblock to appear.
\newblock \href {https://arxiv.org/abs/2210.11081} {\path{arXiv:2210.11081}}.

\bibitem{Betal}
Daniel Bernstein, Sean Dewar, Steven Gortler, Anthony Nixon, Meera Sitharam,
  and Louis Theran.
\newblock Maximum likelihood thresholds via graph rigidity.
\newblock {\em Annals of Applied Probability}, 2024.
\newblock to appear.
\newblock \href {https://arxiv.org/abs/2108.02185} {\path{arXiv:2108.02185}}.

\bibitem{blekherman2019maximum}
Grigoriy Blekherman and Rainer Sinn.
\newblock Maximum likelihood threshold and generic completion rank of graphs.
\newblock {\em Discrete \& Computational Geometry}, 61(2):303--324, 2019.
\newblock \href {https://doi.org/10.1007/s00454-018-9990-3}
  {\path{doi:10.1007/s00454-018-9990-3}}.

\bibitem{buhl1993existence}
S{\o}ren~L. Buhl.
\newblock On the existence of maximum likelihood estimators for graphical
  {G}aussian models.
\newblock {\em Scandinavian Journal of Statistics}, pages 263--270, 1993.

\bibitem{CGT}
Robert Connelly, Steven~J. Gortler, and Louis Theran.
\newblock Generically globally rigid graphs have generic universally rigid
  frameworks.
\newblock {\em Combinatorica}, 40(1):1--37, 2020.
\newblock \href {https://doi.org/10.1007/s00493-018-3694-4}
  {\path{doi:10.1007/s00493-018-3694-4}}.

\bibitem{connelly2010global}
Robert Connelly and Walter Whiteley.
\newblock Global rigidity: The effect of coning.
\newblock {\em Discrete \& Computational Geometry}, 43(4):717--735, 2010.
\newblock \href {https://doi.org/10.1007/s00454-009-9220-0}
  {\path{doi:10.1007/s00454-009-9220-0}}.

\bibitem{CJTglob}
Jim Cruickshank, Bill Jackson, and {Shin-ichi} Tanigawa.
\newblock Global rigidity of triangulated manifolds, 2022.
\newblock \href {https://arxiv.org/abs/2204.02503} {\path{arXiv:2204.02503}}.

\bibitem{DataApexSparse}
Sean Dewar, Georg Grasegger, Eleftherios Kastis, Anthony Nixon, and Brigitte
  Servatius.
\newblock Non-planar (3,6)-sparse graphs with various apex properties, 2024.
\newblock [Data set].
\newblock \href {https://doi.org/10.5281/zenodo.10671293}
  {\path{doi:10.5281/zenodo.10671293}}.

\bibitem{DataApexIndep}
Sean Dewar, Georg Grasegger, Eleftherios Kastis, Anthony Nixon, and Brigitte
  Servatius.
\newblock Non-planar apex graphs with different independence properties, 2024.
\newblock [Data set].
\newblock \href {https://doi.org/10.5281/zenodo.10671321}
  {\path{doi:10.5281/zenodo.10671321}}.

\bibitem{DataApex}
Sean Dewar, Georg Grasegger, Eleftherios Kastis, Anthony Nixon, and Brigitte
  Servatius.
\newblock Non-planar graphs with various apex properties, 2024.
\newblock [Data set].
\newblock \href {https://doi.org/10.5281/zenodo.10671129}
  {\path{doi:10.5281/zenodo.10671129}}.

\bibitem{Ding}
Guoli Ding, Joshua Fallon, and Emily Marshall.
\newblock On almost-planar graphs.
\newblock {\em Electronic Journal of Combinatorics}, 25, 2018.
\newblock \href {https://doi.org/doi.org/10.37236/6591}
  {\path{doi:doi.org/10.37236/6591}}.

\bibitem{Fary}
Istv\'{a}n F\'{a}ry.
\newblock On straight line representation of planar graphs.
\newblock {\em Acta Scientiarum Mathematicarum}, 11:229--233, 1948.

\bibitem{F78}
Stanley Fiorini.
\newblock {A theorem on planar graphs with an application to the reconstruction
  problem, I}.
\newblock {\em The Quarterly Journal of Mathematics}, 29(3):353--361, 1978.
\newblock \href {https://doi.org/10.1093/qmath/29.3.353}
  {\path{doi:10.1093/qmath/29.3.353}}.

\bibitem{FL81}
Stanley Fiorini and Josef Lauri.
\newblock The reconstruction of maximal planar graphs. {I. Recognition}.
\newblock {\em Journal of Combinatorial Theory, Series B}, 30(2):188--195,
  1981.
\newblock \href {https://doi.org/10.1016/0095-8956(81)90063-0}
  {\path{doi:10.1016/0095-8956(81)90063-0}}.

\bibitem{FM}
Stanley Fiorini and Bennet Manvel.
\newblock A theorem on planar graphs with an application to the reconstruction
  problem, {II}.
\newblock {\em Journal of Combinatorics, Information and System Sciences},
  3:200--216, 1978.

\bibitem{Fogelsanger}
Allen~Lee Fogelsanger.
\newblock {\em The generic rigidity of minimal cycles}.
\newblock PhD thesis, Cornell University, 1988.

\bibitem{GGJ}
Dániel Garamv{\"o}lgyi, Steven~J. Gortler, and Tibor Jord{\'a}n.
\newblock Globally rigid graphs are fully reconstructible.
\newblock {\em Forum of Mathematics, Sigma}, 10:e51, 2022.
\newblock \href {https://doi.org/10.1017/fms.2022.44}
  {\path{doi:10.1017/fms.2022.44}}.

\bibitem{MR0400239}
Herman Gluck.
\newblock Almost all simply connected closed surfaces are rigid.
\newblock In Leslie~Curtis Glaser and Thomas~Benjamin Rushing, editors, {\em
  Geometric Topology}, volume 438 of {\em Lecture Notes in Mathematics}, pages
  225--239, Berlin, Heidelberg, 1975. Springer.
\newblock \href {https://doi.org/10.1007/BFb0066118}
  {\path{doi:10.1007/BFb0066118}}.

\bibitem{gortler2010characterizing}
Steven~J. Gortler, Alexander~D. Healy, and Dylan~P. Thurston.
\newblock Characterizing generic global rigidity.
\newblock {\em American Journal of Mathematics}, 132(4):897--939, 2010.
\newblock \href {https://doi.org/10.1353/ajm.0.0132}
  {\path{doi:10.1353/ajm.0.0132}}.

\bibitem{DataCircuits}
Georg Grasegger.
\newblock Circuits in the 3-dimensional rigidity matroid, 2024.
\newblock [Data set].
\newblock \href {https://doi.org/10.5281/zenodo.10671346}
  {\path{doi:10.5281/zenodo.10671346}}.

\bibitem{GGJN}
Georg Grasegger, Hakan Guler, Bill Jackson, and Anthony Nixon.
\newblock Flexible circuits in the d-dimensional rigidity matroid.
\newblock {\em Journal of Graph Theory}, 100(2):315--330, 2022.
\newblock \href {https://doi.org/10.1002/jgt.22780}
  {\path{doi:10.1002/jgt.22780}}.

\bibitem{gross2018maximum}
Elizabeth Gross and Seth Sullivant.
\newblock The maximum likelihood threshold of a graph.
\newblock {\em Bernoulli}, 24(1):386--407, 2018.
\newblock \href {https://doi.org/10.3150/16-BEJ881}
  {\path{doi:10.3150/16-BEJ881}}.

\bibitem{gubser_1996}
Bradley~S. Gubser.
\newblock A characterization of almost-planar graphs.
\newblock {\em Combinatorics, Probability and Computing}, 5(3):227--245, 1996.
\newblock \href {https://doi.org/10.1017/S0963548300002005}
  {\path{doi:10.1017/S0963548300002005}}.

\bibitem{Hen92}
Bruce Hendrickson.
\newblock Conditions for unique graph realizations.
\newblock {\em SIAM Journal on Computing}, 21(1):65--84, 1992.
\newblock \href {https://doi.org/10.1137/0221008} {\path{doi:10.1137/0221008}}.

\bibitem{OEIS}
OEIS~Foundation Inc.
\newblock The on-line encyclopedia of integer sequences, 2024.
\newblock URL: \url{https://oeis.org}.

\bibitem{GSS}
Graver Jack, Servatius Brigitte, and Servatius Herman.
\newblock Combinatorial rigidity.
\newblock {\em Graduate Studies in Mathematics}, 1993.
\newblock \href {https://doi.org/10.1090/gsm/002} {\path{doi:10.1090/gsm/002}}.

\bibitem{JJT14}
Bill Jackson, Tibor Jord\'{a}n, and {Shin-ichi} Tanigawa.
\newblock Combinatorial conditions for the unique completability of low-rank
  matrices.
\newblock {\em SIAM Journal on Discrete Mathematics}, 28(4):1797--1819, 2014.
\newblock \href {https://doi.org/10.1137/140960098}
  {\path{doi:10.1137/140960098}}.

\bibitem{JacksonJordan}
Bill Jackson and Tibor Jordán.
\newblock Connected rigidity matroids and unique realizations of graphs.
\newblock {\em Journal of Combinatorial Theory, Series B}, 94(1):1--29, 2005.
\newblock \href {https://doi.org/10.1016/j.jctb.2004.11.002}
  {\path{doi:10.1016/j.jctb.2004.11.002}}.

\bibitem{JJ05}
Bill Jackson and Tibor Jordán.
\newblock The d-dimensional rigidity matroid of sparse graphs.
\newblock {\em Journal of Combinatorial Theory, Series B}, 95(1):118--133,
  2005.
\newblock \href {https://doi.org/10.1016/j.jctb.2005.03.004}
  {\path{doi:10.1016/j.jctb.2005.03.004}}.

\bibitem{CKP}
Derek~Kitson James~Cruickshank and Stephen~C. Power.
\newblock The rigidity of a partially triangulated torus.
\newblock {\em Proceedings of the London Mathematical Society},
  118(5):1277--1304, 2019.
\newblock \href {https://doi.org/10.1112/plms.12215}
  {\path{doi:10.1112/plms.12215}}.

\bibitem{JT19}
Tibor Jord{\'a}n and {Shin-ichi} Tanigawa.
\newblock Global rigidity of triangulations with braces.
\newblock {\em Journal of Combinatorial Theory, Series B}, 136:249--288, 2019.
\newblock \href {https://doi.org/10.1016/j.jctb.2018.11.003}
  {\path{doi:10.1016/j.jctb.2018.11.003}}.

\bibitem{Jorg}
Leif~K. J{\o}rgensen.
\newblock Contractions to k8.
\newblock {\em Journal of Graph Theory}, 18(5):431--448, 1994.
\newblock \href {https://doi.org/10.1002/jgt.3190180502}
  {\path{doi:10.1002/jgt.3190180502}}.

\bibitem{K-P}
Eleftherios Kastis and Stephen~C. Power.
\newblock Projective plane graphs and 3-rigidity, 2020.
\newblock \href {https://arxiv.org/abs/2003.05514} {\path{arXiv:2003.05514}}.

\bibitem{Kuratowski1930}
Casimir Kuratowski.
\newblock Sur le problème des courbes gauches en topologie.
\newblock {\em Fundamenta Mathematicae}, 15(1):271--283, 1930.
\newblock \href {https://doi.org/10.4064/fm-15-1-271-283}
  {\path{doi:10.4064/fm-15-1-271-283}}.

\bibitem{Laman}
Gerard Laman.
\newblock On graphs and rigidity of plane skeletal structures.
\newblock {\em Journal of Engineering Mathematics}, 4:331--340, 1970.
\newblock \href {https://doi.org/10.1007/BF01534980}
  {\path{doi:10.1007/BF01534980}}.

\bibitem{variations}
Max Lipton, Eoin Mackall, Thomas Mattman, Mike Pierce, Samantha Robinson,
  Jeremy Thomas, and Ilan Weinschelbaum.
\newblock Six variations on a theme: almost planar graphs.
\newblock {\em Involve}, 11(3):413 -- 448, 2018.
\newblock \href {https://doi.org/10.2140/involve.2018.11.413}
  {\path{doi:10.2140/involve.2018.11.413}}.

\bibitem{Maxwell}
James~Clerk Maxwell.
\newblock On the calculation of the equilibrium and stiffness of frames.
\newblock {\em The London, Edinburgh, and Dublin Philosophical Magazine and
  Journal of Science}, 27(182):294--299, 1864.
\newblock \href {https://doi.org/10.1080/14786446408643668}
  {\path{doi:10.1080/14786446408643668}}.

\bibitem{Add}
Gary~L. Miller.
\newblock An additivity theorem for the genus of a graph.
\newblock {\em Journal of Combinatorial Theory. Series B}, 43(1):25--47, 1987.
\newblock \href {https://doi.org/10.1016/0095-8956(87)90028-1}
  {\path{doi:10.1016/0095-8956(87)90028-1}}.

\bibitem{Nevo}
Eran Nevo.
\newblock On embeddability and stresses of graphs.
\newblock {\em Combinatorica}, 27(4):465--472, 2007.
\newblock \href {https://doi.org/10.1007/s00493-007-2168-x}
  {\path{doi:10.1007/s00493-007-2168-x}}.

\bibitem{Geiringer}
Hilda Pollaczek-Geiringer.
\newblock {Über die Gliederung ebener Fachwerke}.
\newblock {\em {Zeitschrift für Angewandte Mathematik und Mechanik}},
  7(1):58--72, 1927.
\newblock \href {https://doi.org/10.1002/zamm.19270070107}
  {\path{doi:10.1002/zamm.19270070107}}.

\bibitem{EG}
R.~Bruce Richter.
\newblock On the {E}uler genus of a {$2$}-connected graph.
\newblock {\em Journal of Combinatorial Theory. Series B}, 43(1):60--69, 1987.
\newblock \href {https://doi.org/10.1016/0095-8956(87)90030-X}
  {\path{doi:10.1016/0095-8956(87)90030-X}}.

\bibitem{RST}
Neil Robertson, Paul Seymour, and Robin Thomas.
\newblock Hadwiger's conjecture fork6-free graphs.
\newblock {\em Combinatorica}, 13(3):279--361, 1993.
\newblock \href {https://doi.org/10.1007/BF01202354}
  {\path{doi:10.1007/BF01202354}}.

\bibitem{MR3838323}
Meera Sitharam, Audrey St.~John, and Jessica Sidman, editors.
\newblock {\em Handbook of geometric constraint systems principles}.
\newblock Discrete Mathematics and its Applications (Boca Raton). CRC Press,
  Boca Raton, 2019.
\newblock \href {https://doi.org/10.1201/9781315121116}
  {\path{doi:10.1201/9781315121116}}.

\bibitem{Tsuff}
{Shin-ichi} Tanigawa.
\newblock Sufficient conditions for the global rigidity of graphs.
\newblock {\em Journal of Combinatorial Theory, Series B}, 113:123--140, 2015.
\newblock \href {https://doi.org/10.1016/j.jctb.2015.01.003}
  {\path{doi:10.1016/j.jctb.2015.01.003}}.

\bibitem{uhler2012geometry}
Caroline Uhler.
\newblock Geometry of maximum likelihood estimation in gaussian graphical
  models.
\newblock {\em Annals of Statistics}, 40(1):238--261, 2012.
\newblock \href {https://doi.org/10.1214/11-AOS957}
  {\path{doi:10.1214/11-AOS957}}.

\bibitem{Coning}
Walter Whiteley.
\newblock Cones, infinity and 1-story buildings.
\newblock {\em Structural Topology}, 8:53--70, 1983.
\newblock URL: \url{https://hdl.handle.net/2099/1003}.

\bibitem{MR927685}
Walter Whiteley.
\newblock Infinitesimally rigid polyhedra. {II}. {M}odified spherical
  frameworks.
\newblock {\em Transactions of the American Mathematical Society},
  306(1):115--139, 1988.
\newblock \href {https://doi.org/10.2307/2000832} {\path{doi:10.2307/2000832}}.

\bibitem{Wlong}
Walter {Whiteley}.
\newblock {Some matroids from discrete applied geometry}.
\newblock In {J.\,E.} Bonin, {J.\,G.} Oxley, and B.~Servatius, editors, {\em
  {Matroid theory}}, number 197 in Contemporary Mathematics, pages 171--311.
  American Mathematical Society, Providence, RI, 1996.
\newblock \href {https://doi.org/10.1090/conm/197/02540}
  {\path{doi:10.1090/conm/197/02540}}.

\end{thebibliography}
}
\end{document}